\numberwithin{equation}{section}
\newtheorem{theorem}{Theorem}[section]
\newtheorem{lemma}[theorem]{Lemma}
\newtheorem{prop}[theorem]{Proposition}
\newtheorem{coro}[theorem]{Corollary}
\theoremstyle{definition}
\newtheorem{definition}[theorem]{Definition}
\newcommand{\Id}{{\bf 1}}
\newcommand{\SH}{{\rm SH}}
\newcommand{\SHm}{{\rm SH}_m(X,\omega)}
\newcommand{\capa}{{\rm Cap}}
\newcommand{\setdef}{\; | \;}
\newcommand{\Ephi}{\mathscr{E}_{\phi}}
\newcommand{\Em}{\mathscr{E}}
\newcommand{\Eone}{\mathscr{E}^1}
\title[Complex Hessian equations with prescribed singularity]{Complex Hessian equations with prescribed singularity on compact K\"ahler manifolds}
\author{Chinh H. Lu}
\address{Universit\'e Paris Sud}
\email{\href{mailto:hoang-chinh.lu@u-psud.fr}{hoang-chinh.lu@u-psud.fr}}
\author{Van-Dong Nguyen}
\address{Ho Chi Minh city University of Education}
\email{\href{mailto:dongnv@hcmup.edu.vn}{dongnv@hcmup.edu.vn}}
\keywords{K\"ahler manifold, Complex Hessian equation, Singularity type}
\subjclass[2010]{32W20, 32U05, 32Q15.}
\thanks{C.H. Lu is supported by the PEPS project JCJC 2019}
\date{\today}
\begin{document}

\begin{abstract}
Let $(X,\omega)$ be a compact K\"ahler manifold of dimension $n$ and fix $1\leq m\leq n$. We prove that the total mass of the complex Hessian measure of $\omega$-$m$-subharmonic functions is non-decreasing with respect to the singularity type.  We then solve complex Hessian equations with prescribed singularity, and  prove a Hodge index type inequality for positive currents. 
\end{abstract}
\maketitle

\tableofcontents

\section{Introduction}
Let  $(X,\omega)$ be a compact K\"ahler manifold of dimension $n$ and fix an integer $m$ such that $1\leq m\leq n$. For convenience we normalize $\omega$ such that $\int_X \omega^n=1$. 

In this paper we study complex Hessian equations of the form 
\begin{equation}\label{eq: Hes intro}
(\omega +dd^c u)^m \wedge \omega^{n-m} = \mu,
\end{equation}
where $\mu$ is a positive measure, and we want to solve the equation for $u$ in a given singularity class.

The case when $m=n$ (the Monge-Amp\`ere case)  has  numerous important applications in differential geometry, see \cite{Au78, Yau78, Kol98}, to only cite a few.
The complex Hessian equation appears in the study of the Fu-Yau equation  related to the Strominger system  \cite{PPZ17,PPZ18,PPZ19}. It is also motivated by the study of the Calabi problem for HKT-manifolds \cite{AV10}.  Its real counterpart, the real Hessian equation, was studied intensively with many interesting applications \cite{CNS85,T95,CW01}.

After several attempts \cite{Kok10}, \cite{Hou}, \cite{Jbi}, the existence of smooth solutions in the  smooth case (when $\mu = e^f \omega^n$, for some smooth function $f$) was solved \cite{DK17} by combining a Liouville type theorem for $m$-subharmonic functions  \cite{DK17} and a second order a priori estimate \cite{HMW}. This idea was recently used in  \cite{Sze18}, \cite{CP19} to solve the Dirichlet problem for complex Hessian equations on complex manifolds.   Degenerate solutions were studied in \cite{DK14,DK18}, \cite{GN18},  \cite{KN16}, \cite{Lu13, LN15}  and many others.

In \cite{LN15}, the authors have developed a global potential theory for $\omega$-$m$-subharmonic functions, solving \eqref{eq: Hes intro} in the full mass class $\mathcal{E}(X,\omega,m)$. This class consists of functions with very mild singularity, e.g. in case $n=m$, these have zero Lelong number everywhere.  In this paper we extend the study of \cite{LN15} to classes of $\omega$-$m$-sh functions with heavy singularities, inspired by \cite{DDL2,DDL4,DDL5}.  To do this, we first need a monotonicity result which is the first main result of this paper.

\begin{theorem}
	\label{thm: monotonicity intro}
	Assume that $u_1,...,u_m,v_1,...,v_m$ are $\omega$-$m$-sh functions on $X$ such that $u_p\leq v_p$, for all $p\in \{1,...,m\}$. Then 
	$$
	\int_X H_m(u_1,...,u_m) \leq \int_X H_m(v_1,...,v_m). 
	$$
\end{theorem}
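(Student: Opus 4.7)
My plan adapts the Darvas--Di Nezza--Lu approach to monotonicity of non-pluripolar Monge--Amp\`ere masses to the $\omega$-$m$-subharmonic setting, using the pluripotential framework developed in \cite{LN15}.

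\textbf{Telescoping to one variable and Stokes.} By multilinearity of $H_m$ in its arguments,
\[
\int_X H_m(v_1,\ldots,v_m) - \int_X H_m(u_1,\ldots,u_m) = \sum_{p=1}^m \Bigl(\int_X H_m(v_{<p},v_p,u_{>p}) - \int_X H_m(v_{<p},u_p,u_{>p})\Bigr),
\]
so it suffices to prove the one-variable version: if $u\leq v$ are $\omega$-$m$-sh and $T=(\omega+dd^c w_1)\wedge\cdots\wedge(\omega+dd^c w_{m-1})\wedge\omega^{n-m}$ for some $\omega$-$m$-sh $w_1,\dots,w_{m-1}$, then $\int_X (\omega + dd^c u)\wedge T \leq \int_X (\omega + dd^c v)\wedge T$. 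With the canonical truncations $u^k := \max(u,-k)$, $v^k := \max(v,-k)$, and $T^k := \bigwedge_p(\omega+dd^c \max(w_p,-k))\wedge \omega^{n-m}$, the inequality $u \leq v$ implies $u^k \leq v^k$, and for bounded potentials Stokes' theorem yields
\[
\int_X (\omega + dd^c u^k) \wedge T^k = \int_X \omega \wedge T^k = \int_X (\omega + dd^c v^k) \wedge T^k,
\]
so that the total masses at each truncation level coincide.

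\textbf{Rooftop family and bounded-perturbation invariance.} Introduce $\phi_s := \max(u, v-s)$ for $s \geq 0$: this is $\omega$-$m$-sh, $\phi_0 = v$ (since $u\leq v$), $\phi_s \searrow u$ a.e.\ as $s \to \infty$, and $v-s \leq \phi_s \leq v$, so $\phi_s$ differs from $v$ only by a bounded function. The key intermediate step is to show
\[
\int_X (\omega+dd^c \phi_s)\wedge T = \int_X (\omega+dd^c v)\wedge T \qquad \text{for every } s \geq 0,
\]
i.e.\ that the non-pluripolar mass is invariant under bounded perturbations. This adapts a theorem of Witt Nystr\"om (for $m=n$) to the $m$-sh setting and relies on the Stokes identity above, the plurifine locality of the non-pluripolar product, and the comparison principle of \cite{LN15}. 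The monotonicity then follows by letting $s\to\infty$: since $\phi_s \searrow u$, a carefully controlled passage to the limit (performed level-by-level through the truncations $\phi_s^k$ on the plurifine-open sets $\{\phi_s > -k\}$) gives $\int_X (\omega+dd^c u) \wedge T \leq \int_X (\omega+dd^c \phi_s)\wedge T$, which combined with the above invariance yields the theorem.

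\textbf{Main obstacle.} The chief technical difficulty is the bounded-perturbation invariance. For $m = n$ this is classical, but in the $m$-sh setting the integration by parts must carry the factor $\omega^{n-m}$ throughout, and the argument can only exploit the $m$-positivity---not the full positivity---of the intermediate wedges, a subtler feature distinguishing the Hessian case from Monge--Amp\`ere. A closely related delicate point is the passage to the limit $s\to\infty$: the desired inequality points \emph{against} the weak-$*$ direction of convergence, so one has to argue via the canonical truncations $u^k$, $\phi_s^k$ and use the comparison principle to control the mass loss on the sublevel sets $\{u \leq -k\}$ versus $\{\phi_s \leq -k\}$. These two obstacles are tightly interrelated and together constitute the technical core of the proof.
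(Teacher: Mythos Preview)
The overall architecture of your argument---reduce to the equal-singularity case and then pass to the limit via lower semicontinuity---matches the paper's. The decisive difference lies in how you handle the ``bounded-perturbation invariance'' $\int_X(\omega+dd^c\phi_s)\wedge T = \int_X(\omega+dd^c v)\wedge T$, which you correctly flag as the main obstacle but do not actually prove.

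Your proposal is to ``adapt a theorem of Witt Nystr\"om,'' but this is precisely where the gap lies. Witt Nystr\"om's proof for $m=n$ rests on test curves and their Legendre transforms (weak geodesics), and the paper explicitly notes that this machinery is not available in the Hessian setting---that is exactly why the authors sought a new argument. The ingredients you list (Stokes on truncations, plurifine locality, comparison principle) do not by themselves yield the invariance: Stokes only says that all truncated masses equal $1$, and plurifine locality controls the interiors $\{u>-k\}$, but one still has to show that the residual masses $\int_{\{u\le -k\}}H_m(u^k,w_1^k,\dots)$ and $\int_{\{v\le -k\}}H_m(v^k,w_1^k,\dots)$ coincide in the limit, which is the entire content of the claim. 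Your ``Main obstacle'' paragraph acknowledges this difficulty without resolving it.

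The paper's substitute is worth knowing. For a single potential $u$, one computes the slope $\lim_{s\to\infty} E_l(\max(u,-s))/s = -1+\tfrac{1}{l+1}\sum_{k=0}^l\int_X H_k(u)$ (Lemma~\ref{lem: slope formula}). Since the energies $E_l$ are monotone, two potentials with $|u-v|\le C$ have the same slope for every $l$, hence the same $\int_X H_k$ for every $k\le m$ (Proposition~\ref{prop: mass equality}). A polarization argument---expand $H_m(\sum t_p u_p)$ and $H_m(\sum t_p v_p)$ via Lemma~\ref{lem: multilinearity} and match polynomial coefficients in $t$---upgrades this to the mixed masses $\int_X H_m(u_1,\dots,u_m)=\int_X H_m(v_1,\dots,v_m)$ under equal singularity. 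The general inequality then follows by applying this to $w_p^C=\max(u_p,v_p-C)$ and letting $C\to\infty$ through Theorem~\ref{thm: lsc of Hes measure}. Note that this route bypasses your telescoping entirely: the polarization handles all $m$ slots at once, and the unproven one-slot invariance is replaced by a concrete energy computation.
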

Here $H_m(u_1,...,u_m):= (\omega+dd^c u_1) \wedge ... \wedge (\omega+dd^c u_m) \wedge \omega^{n-m}$ is the non-$m$-polar product; the relevant definitions will be given in Section \ref{sect: Preliminary}. 

For $n=m$, the above result was conjectured in \cite{BEGZ10} in the general context of big cohomology classes, and proved in \cite{WN19}. The monotonicity result in \cite{WN19} can also be proved  using geodesic rays \cite{DDL6}.  The approach of \cite{WN19} was recently used in \cite{Xia19} to prove an integration by parts formula.  
Our proof of Theorem \ref{thm: monotonicity intro} uses the monotonicity of the Hessian energy avoiding the geodesic notion which is not yet avaliable in the Hessian setting. 
\medskip

Having the monotonicity result and using recent techniques in \cite{DDL2,DDL4} we  study the complex Hessian equation with prescribed singularities.   
The second main result is the following: 

\begin{theorem}
	\label{thm: Hes eq intro}
	Assume that $\phi$ is a $\omega$-$m$-sh function   such that $P[\phi]=\phi$. Let $\mu$ be a non-$m$-polar positive measure  such that $\mu(X)=\int_X H_m(\phi)>0$.  Then there exists a unique $u\in \Ephi$ normalized by $\sup_X u=0$,  such that 
$
H_m(u)= \mu. 
$
\end{theorem}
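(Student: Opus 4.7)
The plan is to combine a variational solution on a finite-energy subclass with an approximation argument, following the strategy developed in the Monge--Amp\`ere setting by Darvas--Di~Nezza--Lu \cite{DDL2,DDL4}. The newly established Theorem~\ref{thm: monotonicity intro} will play the role of the mass-monotonicity of \cite{WN19}, which is the key input that makes this strategy run in our setting.

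\textbf{Step 1 (Variational solution for measures of finite energy).} First I would introduce the relative finite-energy class $\mathscr{E}^1_\phi \subset \Ephi$ and the associated relative Hessian energy $E_\phi$ modeled on $\phi$. Restricted to the slice of normalized potentials $\{u \in \mathscr{E}^1_\phi : \sup_X u = 0\}$ one considers the functional
$$F_\mu(u) := E_\phi(u) - \int_X (u-\phi)\, d\mu.$$
When $\mu$ has finite energy with respect to $\phi$, arguments parallel to those in the full-mass setting of \cite{LN15} show that $F_\mu$ is concave, upper semi-continuous, and coercive, hence attains its maximum at some $u$. Computing the Euler--Lagrange equation via variations of the form $(1-t)u + t\max(u,v)$ for test potentials $v \in \mathscr{E}^1_\phi$ yields $H_m(u) = c\mu$ for some $c>0$. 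The constant is pinned down by Theorem~\ref{thm: monotonicity intro}: since $u \in \Ephi$ and $P[\phi] = \phi$, mass-monotonicity forces $\int_X H_m(u) = \int_X H_m(\phi) = \mu(X)$, so $c = 1$.

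\textbf{Step 2 (General non-$m$-polar $\mu$).} I would approximate the given non-$m$-polar $\mu$ by a sequence $\mu_j$ of non-$m$-polar measures with bounded density, normalized to have total mass $\mu(X)$ and converging weakly to $\mu$ (for example via a Radon--Nikodym decomposition against $\omega^n$ and a truncation). Each $\mu_j$ has finite energy, so Step~1 produces $u_j \in \mathscr{E}^1_\phi$ with $H_m(u_j) = \mu_j$ and $\sup_X u_j = 0$. Compactness of $\SHm$ under $L^1$-normalization extracts a subsequential limit $u \in \SHm$ with $\sup_X u = 0$. One then shows $u \in \Ephi$, and, invoking Theorem~\ref{thm: monotonicity intro} once more to rule out any loss of Hessian mass in the limit, concludes that $H_m(u_j) \to H_m(u) = \mu$ weakly. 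Uniqueness then follows from the standard route of \cite{DDL4}: for $u,v \in \Ephi$ both solving the equation with $\sup_X u = \sup_X v = 0$, set $w := \max(u,v) \in \Ephi$; a partial comparison principle combined with the mass balance $\int H_m(w) = \int H_m(\phi) = \mu(X)$ forces $H_m(w) = \mu$, from which an energy-inequality argument (replacing the Monge--Amp\`ere geodesic argument of \cite{DDL6}, which has no counterpart in the Hessian setting) forces $u = v$.

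\textbf{Main obstacle.} The crux of the argument is mass conservation. The equality $\int_X H_m(u) = \int_X H_m(\phi)$ for $u \in \Ephi$ underpins the upgrade $c = 1$ in Step~1, the absence of mass escape in Step~2, and the comparison in the uniqueness argument. This is precisely what Theorem~\ref{thm: monotonicity intro} supplies. Since geodesic rays are not available for $\omega$-$m$-sh functions, the geodesic-based proof of \cite{DDL6} does not transfer here, which is why the energy-based monotonicity of Theorem~\ref{thm: monotonicity intro} is indispensable.
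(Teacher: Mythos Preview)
Your approach is genuinely different from the paper's, and the central step contains a real gap.

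\textbf{The gap in Step~2.} After solving $H_m(u_j)=\mu_j$ with $u_j\in\mathscr{E}^1_\phi$, $\sup_X u_j=0$, and extracting an $L^1$ limit $u$, you assert ``one then shows $u\in\Ephi$'' without saying how. This is exactly the crux. An $L^1$ limit of normalized potentials in $\Ephi$ need not stay in $\Ephi$: it may be strictly more singular than $\phi$. Theorem~\ref{thm: monotonicity intro} only yields $\int_X H_m(u)\leq\int_X H_m(\phi)$, not equality; ruling out mass loss requires a uniform lower bound of the form $u_j\geq\phi-C$ (or a substitute). The paper explicitly identifies this as the main obstacle, noting that the relative $L^\infty$-estimate used in \cite{DDL4} is ``quite delicate in the Hessian setting due to a lack of integrability of $\omega$-$m$-subharmonic functions'', and devotes an entire section (Section~\ref{sect: metric}) to building a complete metric $d$ on $\mathscr{E}^1$ whose sole purpose is to produce $\omega$-$m$-sh \emph{subextensions} (Theorem~\ref{thm: compactness}): from a suitable sequence one extracts a subsequence for which $\lim_{l}P(u_k,\dots,u_{k+l})\in\mathscr{E}^1$. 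This is the tool that keeps the envelopes from dropping out of the right singularity class. Your proposal has no analogue.

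\textbf{How the paper actually proceeds.} The paper does not use a variational functional at all. It uses the supersolution method: for each $c>1$ it builds $w$ with $H_m(w)\leq c\mu$ and $P[w]\geq\phi$ by solving auxiliary equations in the full-mass class $\mathscr{E}$ (where \cite{LN15} applies) with an extra term $a\,\Id_{\{\phi\leq-k\}}H_m(\max(\phi,-k))$, then stripping off that term via envelopes $P(bu_k-(b-1)\max(\phi,-k))$ and the viscosity-type inequality of Proposition~\ref{prop: viscosity}. The subextension theorem is what guarantees the nested envelopes $P(u_k,\dots,u_{k+l})$ do not collapse to $-\infty$. Taking a diagonal limit of supersolutions with $c\searrow 1$ and invoking the dominated-convergence result (Theorem~\ref{thm: dominated convergence}) yields the solution.

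\textbf{Uniqueness.} Your sketch reaches $H_m(\max(u,v))=\mu$, which is correct, but ``an energy-inequality argument'' is too vague to count as a proof. The paper gives a new argument based on the envelope $\varphi_b=P(bu-(b-1)v)$ combined with the mixed Hessian inequality (Lemma~\ref{lem: mixed Hes ineq}) and the orthogonality property of envelopes (Proposition~\ref{prop: orthogonal}); it also remarks that Dinew's argument from \cite{DiwJFA09} adapts. Either route is acceptable, but you would need to actually carry one of them out.
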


The definition of the envelope $P[u]$, and the relative finite energy class $\Ephi$ will be given in Section \ref{sect: envelope}.  One can prove the uniqueness of solution by slightly modifying the proof of S. Dinew in the Mong-Amp\`ere case (see \cite{DiwJFA09,DL15}), which crucially uses the resolution of the equation. We propose in this paper an alternative proof using the fact that  the Hessian measure of the envelope is supported on the contact set.  To prove the existence of solutions we use the supersolution method of \cite{GLZJDG} as in \cite{DDL4}: we take the lower envelope of supersolutions. To do so, we need to bound the supersolutions from below.  This was done in \cite{DDL4} by establishing a relative $L^{\infty}$-estimate which is quite delicate in the Hessian setting  due to a lack of integrability of $\omega$-$m$-subharmonic functions. We overcome this by constructing $\omega$-$m$-subharmonic subextensions via a complete metric in the space $\mathscr{E}^1$, inspired by \cite{Dar15,Dar17IMRN,DDL3}. 

\medskip

Using the resolution of the complex Hessian equations with prescribed singularity we prove a Hodge-index type inequality for positive closed $(1,1)$-currents. 
	
\begin{theorem}\label{thm: log concave}
Let $u_j,j=1,...,m$ be $\omega$-$m$-subharmonic functions on $X$. Then 
$$
\int_X H_m(u_1,...,u_m) \geq   \prod_{k=1}^m \left (\int_X H_m(u_k) \right)^{1/m}. 
$$
\end{theorem}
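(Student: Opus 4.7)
The plan is as follows. If $a_k := \int_X H_m(u_k) = 0$ for some $k$, the right-hand side vanishes and the inequality is trivial, so I assume $a_k > 0$ for every $k$. Setting $\phi_k := P[u_k]$, the idempotence of the envelope operator and the preservation of total Hessian mass under $P[\cdot]$ (both established earlier in the paper) give $P[\phi_k] = \phi_k$, $\int_X H_m(\phi_k) = a_k$, and $u_k \in \mathscr{E}_{\phi_k}$.

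The strategy is to regularize each $u_k$ within its singularity class so that a pointwise Brunn--Minkowski inequality for Hessian operators can be applied. I would apply Theorem \ref{thm: Hes eq intro} with $\phi = \phi_k$ and the non-$m$-polar measure $\mu = a_k\, \omega^n$, whose total mass equals $a_k = \int_X H_m(\phi_k)$; this produces $v_k \in \mathscr{E}_{\phi_k}$, normalized by $\sup_X v_k = 0$, solving $H_m(v_k) = a_k\, \omega^n$. For smooth strictly $m$-subharmonic functions, Garding's inequality for hyperbolic polynomials yields the pointwise bound $H_m(\psi_1, \ldots, \psi_m) \geq \prod_k H_m(\psi_k)^{1/m}$, with $m$-th roots of measures interpreted via Radon--Nikodym derivatives with respect to $\omega^n$. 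By approximating each $v_k$ from above by smooth $\omega$-$m$-sh potentials decreasing within $\mathscr{E}_{\phi_k}$ and using continuity of the non-$m$-polar mixed product along such sequences as in \cite{LN15}, this inequality should transfer to the $v_k$; since $H_m(v_k) = a_k \omega^n$ has constant density $a_k$ with respect to $\omega^n$,
$$H_m(v_1, \ldots, v_m) \;\geq\; \Bigl(\prod_{k=1}^m a_k\Bigr)^{1/m} \omega^n,$$
so integration yields $\int_X H_m(v_1, \ldots, v_m) \geq (\prod_k a_k)^{1/m}$.

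To conclude I would invoke the invariance of the total mixed Hessian mass within fixed singularity classes: since $u_k, v_k \in \mathscr{E}_{\phi_k}$ for every $k$, $\int_X H_m(u_1, \ldots, u_m) = \int_X H_m(v_1, \ldots, v_m)$. This is the mixed-variable analogue of the preservation of pure Hessian mass within $\mathscr{E}_{\phi_k}$ and would be deduced by iterated application of Theorem \ref{thm: monotonicity intro}: for each $k$ the pair $u_k, v_k$ is sandwiched between their rooftop envelope and their maximum (both of which lie in $\mathscr{E}_{\phi_k}$ with pure mass $a_k$), forcing the monotonicity inequalities of Theorem \ref{thm: monotonicity intro} to collapse to equalities when one swaps $u_k$ for $v_k$ one variable at a time. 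The hardest part will be the approximation step underlying Garding's pointwise inequality: unlike in the Monge--Amp\`ere setting, $\omega$-$m$-sh functions lack convenient $L^p$-integrability, so smoothing must be carried out within the relative class, leveraging the complete metric on $\mathscr{E}^1$ constructed during the existence proof for Theorem \ref{thm: Hes eq intro}.
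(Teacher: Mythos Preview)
Your overall strategy is exactly the paper's: solve $H_m(v_k)=a_k\omega^n$ in $\mathscr{E}_{\phi_k}$ via Theorem~\ref{thm: Hes eq intro}, apply the mixed Hessian inequality to the $v_k$, and transfer the total mass back to the $u_k$ using invariance within singularity classes.

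Two of your sub-steps are over-engineered. First, the pointwise inequality $H_m(v_1,\ldots,v_m)\geq (a_1\cdots a_m)^{1/m}\omega^n$ does not require any smooth approximation argument: it is a direct instance of Lemma~\ref{lem: mixed Hes ineq} (the mixed Hessian inequality for non-$m$-polar measures), applied with $\mu=\omega^n$ and $f_k\equiv a_k$. So what you flag as ``the hardest part'' is in fact immediate from a lemma already established in the paper. Second, the mass invariance $\int_X H_m(u_1,\ldots,u_m)=\int_X H_m(v_1,\ldots,v_m)$ is precisely Lemma~\ref{lem: mass u Pu}: both sides equal $\int_X H_m(P[u_1],\ldots,P[u_m])$ since $P[u_k]=P[v_k]=\phi_k$. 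Your sandwiching argument via $P(u_k,v_k)$ and $\max(u_k,v_k)$ is not wrong in spirit, but it is circular: to show the rooftop envelope and the maximum give the same mixed mass you would again need Lemma~\ref{lem: mass u Pu} (they have the same $P[\cdot]$ but not, in general, the same singularity up to a bounded constant, so Theorem~\ref{thm: monotonicity intro} alone does not collapse to equality).
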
	
The above result  generalizes that of \cite{DDL4} which considers the case $m=n$, and \cite{Xiao} which considers smooth forms. Other directions  can also be explored to extend the above result to the case of  big cohomology classes. The proof of Theorem \ref{thm: log concave} is an obvious modification of the Monge-Amp\`ere case (see \cite{DDL2,DDL4}) given Theorem \ref{thm: monotonicity intro} and Theorem \ref{thm: Hes eq intro}.

\medskip

\noindent {\bf Organization of the paper.} In Section \ref{sect: Preliminary} we recall backgrounds on $\omega$-$m$-subharmonic functions and the complex Hessian operator. The relative potential theory adapted to the Hessian setting is discussed in Section \ref{sect: Relative PP}, where we prove Theorem \ref{thm: monotonicity intro} in Section \ref{sect: monotonicity} (Theorem \ref{thm: monotonicity}). We use the metric defined in Section \ref{sect: metric}  to establish the existence of solutions in Section \ref{sect: solutions}, where Theorem \ref{thm: Hes eq intro} is proved (Theorem \ref{thm: existence general}). The uniqueness is given a new proof  in Section \ref{sect: uniqueness}. Theorem \ref{thm: log concave} is proved in Section \ref{subsect: log-concave inequality}. 

\section{Backgrounds}\label{sect: Preliminary}

Let $(X,\omega)$ be  a compact K\"ahler manifold of dimension $n$, and fix an integer $m$ such that $1\leq m \leq n$. 

\subsection{$\omega$-$m$-subharmonic functions} In this section, we recall backgrounds on  $m$-subharmonic functions on domains as well as on compact  K\"ahler manifolds. Many properties of the complex Hessian operator can be proved by easy adaptations of the Monge-Amp\`ere case.   More details  on several classes of $m$-subharmonic functions  can be found in \cite{Li04,Bl05,SA12,DL15,Ch16,Lu15,LN15,DK14,DK17,NNC14,NNC12,GN18,EG17,NN13,LT19} and the references therein. 

Fix $\Omega$ an open subset of $\mathbb{C}^n$ and $\beta := dd^c \rho$ a K\"ahler form in $\Omega$ with smooth bounded potential. 

\begin{definition}
A function $u\in C^2(\Omega,\mathbb{R})$ is called  $m$-subharmonic ($m$-sh for short) with respect to $\beta$ if the following inequalities  hold in $\Omega$ : 
$$
(dd^c u)^k \wedge \beta^{n-k} \geq 0, \ \forall k \in \{1,...,m\}. 
$$	
\end{definition}

\begin{definition}
A function $u\in L^{1}(\Omega,\mathbb{R})$ is called $m$-subharmonic with respect to $\beta$ if  
\begin{enumerate}
	\item $u$ is upper semicontinuous in $\Omega$,
	\item $dd^c u \wedge dd^c u_2 \wedge ... \wedge dd^c u_m \wedge \beta^{n-m} \geq 0$, for all $u_2,...,u_m \in C^2(\Omega)$, $m$-sh with respect to $\beta$, 
	\item if $v\in L^1(\Omega)$ satisfies the above two conditions and $u=v$ a.e. in $\Omega$ then $u \leq v$.  
\end{enumerate}
\end{definition}
As observed by B{\l}ocki \cite{Bl05},  G{\aa}rding's inequality \cite{Gar59} ensures that the two definitions of  $m$-sh functions above coincide for smooth functions. 
\begin{definition}
A function $u\in L^1(X,\omega^n)$ is called $\omega$-$m$-subharmonic ($\omega$-$m$-sh for short)  if, locally in $\Omega \subset X$ where $\omega=dd^c \rho$,  $u+\rho$ is $m$-subharmonic with respect to $\omega$. 

The set of all $\omega$-$m$-sh functions on $X$ is denoted by $\SH_{m}(X,\omega)$. 
\end{definition}

The above definition depends heavily on the K\"ahler form $\omega$. This makes the smooth  approximation of $\omega$-$m$-subharmonic  functions quite complicated unless $\omega$ is flat.  Nevertheless, it was shown in  \cite{LN15}, \cite{KN16} using the viscosity theory and an approximation scheme of Berman \cite{Berm18}, and in \cite{Pl13}, \cite{HLP16} using the local smooth resolution, that the smooth approximation of $m$-subharmonic functions is possible.  As mentioned in \cite{HLP16}, the global approximation theorem in \cite{LN15} yields the local one. A direct proof of the local approximation property (which is also valid in the Hermitian setting) was given in \cite[Theorem 3.18]{GN18}. 
 \smallskip

 Given $u,v\in \SH_m(X,\omega)$, we say that $u$ is less singular than $v$ if there exists a constant $C$ such that $v\leq u+C$. We say that $u$ has the same singularity as $v$ if there exists a constant $C$ such that $u-C\leq v \leq u+C$. 
 \smallskip
 

In the flat case, B{\l}ocki proved in \cite{Bl05} that $m$-sh functions are in $L^p$ for any $p<n/(n-m)$, and conjectured that it holds for $p<nm/(n-m)$. Using the $L^{\infty}$ estimate due to S. Dinew and Ko{\l}odziej,  one can prove the same integrability property for $\omega$-$m$-sh functions, see \cite{DK14}, \cite[Corollary 6.7]{LN15}.

\subsection{Complex Hessian operator}

Given bounded $\omega$-$m$-sh functions $u_1$, $...,u_m$ the complex Hessian operator 
$$
H_m(u_1,...,u_m):= (\omega + dd^c u_1)  \wedge ... \wedge (\omega +dd^c u_m) \wedge  \omega^{n-m}
$$ 
is defined recursively by following Bedford-Taylor's seminal works \cite{BT76,BT82}. This gives a positive Borel measure and $H_m$ enjoys many nice convergence properties (see \cite{LN15},\cite{Lu13},\cite{GN18}).  When $u_1=...=u_m=u$ we simply denote the $m$-Hessian measure of $u$ by $H_m(u)$. 
\medskip

By plurifine locality   (see \cite{DK14,DK17,Lu13, LN15})  we have the following property: 
$$
\Id_U H_k(\max(u_1,v_1), ...,\max(u_m,v_m)) = \Id_U H_k(u_1,...,u_m),
$$
where  $u_1,..,u_m$, $v_1,...,v_m$ are bounded $\omega$-$m$-sh functions, and  $U:= \cap_{j=1}^m\{u_j>v_j\}$. 

For a Borel set $E\subset X$ we define 
$$
\capa_m(E) := \sup\left\{\int_E H_m(u) \setdef u \in \SH_m(X,\omega), \ -1\leq u \leq 0\right\}.
$$
A sequence of  functions $u_j$ converges in capacity to $u$ if for all $\varepsilon>0$, 
$$
\lim_{j\to +\infty}\capa_m (|u_j-u|>\varepsilon) =0. 
$$


Given $u_1,...,u_m \in \SH_m(X,\omega)$, not necessarily bounded,  and  $s>t$ we have
$$
{\bf 1}_{U^s} H_m(u_1^t,...,u_p^t) = {\bf 1}_{U^s} H_m(u_1^s,...,u_m^s),
$$
where $U^{s} := \cap_{p=1}^m \{u_p >-s\}$ and $u^s:= \max(u,-s)$.
It thus follows that the family of positive measures 
$
{\bf 1}_{U^s} H_m(u_1^s,...,u_m^s)
$
 is increasing in $s$, allowing to define 
 $$
 H_m(u_1,...,u_m) := \lim_{s\to +\infty}{\bf 1}_{U^s} H_m(u_1^s,...,u_m^s).
 $$
When $u_1=...=u_m=u$  we simply denote the Hessian  measure $H_m(u,u,...,u)$ by $H_m(u)$. An application of the Stokes theorem  gives
$$
0\leq \int_X H_m(u) \leq 1. 
$$

A Borel set $E$ is called $m$-polar (with respect to $\omega$) if there exists $u \in \SHm$ such that $E\subset \{u=-\infty\}$. 
\begin{lemma}
The positive measure $H_k(u)$ does not charge $m$-polar sets. 
\end{lemma}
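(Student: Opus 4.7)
The plan is a two-step reduction to the bounded case. First, for $u \in \SH_m(X,\omega)$ the non-$m$-polar product is defined by
$$
H_m(u) = \lim_{s \to \infty} \mathbf{1}_{\{u > -s\}} H_m(u^s), \qquad u^s := \max(u,-s),
$$
with the sequence on the right monotone increasing in $s$. Hence for any $m$-polar Borel set $E$ it is enough to prove $H_m(u^s)(E \cap \{u > -s\}) = 0$ for every fixed $s$. This reduces the statement to the analogous claim for the Hessian measure of a bounded $\omega$-$m$-subharmonic function.

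For a bounded $w \in \SH_m(X,\omega)$ I would then invoke two standard ingredients of the Hessian potential theory developed in \cite{Bl05, DK14, LN15, GN18}: (i) $\capa_m(E) = 0$ whenever $E$ is $m$-polar, and (ii) $H_m(w) \leq C(\|w\|_\infty)\,\capa_m$ as Borel measures. Combining them gives $H_m(w)(E) = 0$. Ingredient (ii) is essentially the definition of $\capa_m$ after rescaling $w$ to lie in $[-1,0]$. For (i), pick $v \in \SH_m(X,\omega)$ with $v \leq 0$ and $E \subset \{v = -\infty\}$; then $\epsilon v \in \SH_m(X,\omega)$ for every $\epsilon \in (0,1]$ because
$$
\omega + dd^c(\epsilon v) = (1-\epsilon)\omega + \epsilon(\omega + dd^c v) \geq 0,
$$
and a Chern–Levine–Nirenberg type estimate combined with this scaling yields $\capa_m(\{v < -t\}) \to 0$ as $t \to \infty$, hence $\capa_m(E) \leq \capa_m(\{v = -\infty\}) = 0$.

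The step I expect to be the main obstacle is establishing ingredient (ii) cleanly, since $\SH_m(X,\omega)$ is not closed under arbitrary scaling that preserves the reference form, and one cannot directly normalize $w$ to a test function for $\capa_m$ without some care. The standard remedy is to prove quasi-continuity of bounded $\omega$-$m$-subharmonic functions with respect to $\capa_m$ and then run a Bedford–Taylor style absolute-continuity argument; this machinery is already set up in the references above, so once the reduction to bounded $w$ is in place the lemma follows by a short citation.
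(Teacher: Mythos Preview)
Your reduction to the bounded case is identical to the paper's. After that, however, you treat only $H_m$, whereas the lemma as stated concerns $H_k(u)$ for every $k\in\{1,\dots,m\}$. This is a genuine (if minor) gap: nothing in your argument touches $H_k$ for $k<m$. It is easily repaired---either observe that $\SH_m(X,\omega)\subset\SH_k(X,\omega)$ and that $m$-polar sets are $k$-polar, then rerun your capacity argument with $\capa_k$, or use the paper's device below.

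For the bounded case the paper takes a shorter route than yours. Given bounded $v\in\SH_m(X,\omega)$, the paper expands
\[
(2\omega+dd^c v)^m\wedge (2\omega)^{n-m}
=2^{n-m}\sum_{k=0}^m\binom{m}{k}H_k(v),
\]
cites directly from \cite{Lu13,LN15} that the left-hand side vanishes on $m$-polar sets, and concludes that each nonnegative summand $H_k(v)$ does as well. This single line handles all $k$ simultaneously and avoids any discussion of $\capa_m$.

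Your capacity route is also valid, and your worry about ingredient~(ii) is overstated. If $-M\leq w\leq 0$ then $w/M\in\SH_m(X,\omega)$ with $-1\leq w/M\leq 0$, and the same binomial expansion gives
\[
H_m(w)\leq (M\omega+dd^c w)^m\wedge\omega^{n-m}
=\sum_{k=0}^m\binom{m}{k}(M-1)^{m-k}H_k(w)
= M^m H_m(w/M),
\]
hence $H_m(w)(E)\leq M^m\capa_m(E)$ for every Borel $E$. No quasi-continuity or Bedford--Taylor absolute-continuity machinery is needed.
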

\begin{proof}
If $v \in \SH_m(X,\omega)$ is bounded then $(2\omega +dd^c v)^m \wedge (2\omega)^{n-m}$ vanishes on $m$-polar sets (see \cite{Lu13,LN15}). Since 
$$
(2\omega +dd^c v)^m \wedge \omega^{n-m} = \sum_{k=0}^m \binom{m}{k} H_k(v),
$$
 it follows that  $H_k(v)$ also vanishes on $m$-polar sets for  $k =1,...,m$. 
Each $H_k(u_j)$ does not charge $m$-polar sets because $u_j:= \max(u,-j)$ is bounded. Since $H_k(u)$ is the strong limit of ${\bf 1}_{\{u>-j\}}H_k(u_j)$ it follows that $H_k(u)$ vanishes on $m$-polar sets. 
\end{proof}

\begin{definition}
	A Borel set $E\subset X$ is called quasi-open (quasi closed) if for each $\varepsilon>0$, there exists an open (closed) set $U$ such that 
	$$
	\capa_m( (E\setminus U) \cup (U\setminus E) ) <\varepsilon. 
	$$ 
\end{definition}

Since $\omega$-$m$-sh functions are quasi-continuous, see \cite{Lu13}, the sets of the form 
$$
\cap_{j=1}^N \{u_j>v_j\},
$$
where $u_j,v_j$ are $\omega$-$m$-sh functions, are quasi-open, while the corresponding sets with $\geq$ sign are quasi-closed. 

\begin{theorem}\label{thm: convergence quasi set}
Assume that $u_1^j,...,u_m^j$ are sequences of $\omega$-$m$-sh functions which are uniformly bounded. If $u_p^j$ converges in $m$-capacity to $u_p\in \SH_m(X,\omega)$, for all $p=1,...,m$, then 
$$
\liminf_{j} \int_E H_m(u_1^j,...,u_m^j) \geq \int_E H_m(u_1,...,u_m),
$$
for all quasi-open set $E$, and 
$$
\limsup_{j} \int_K H_m(u_1^j,...,u_m^j) \leq \int_K H_m(u_1,...,u_m),
$$
for all quasi-closed set $K$. 
\end{theorem}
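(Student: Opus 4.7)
The strategy is to combine weak convergence of the mixed Hessian measures (available for uniformly bounded sequences converging in capacity) with a uniform domination of these measures by $m$-capacity, and then exploit that a quasi-open (resp.\ quasi-closed) set can be approximated up to arbitrarily small capacity by an open (resp.\ closed) set.

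First, I would invoke the Bedford--Taylor type fact, already established in the Hessian setting in \cite{Lu13, LN15, GN18}, that if $v_p^j \in \SH_m(X,\omega)$ are uniformly bounded and converge in $m$-capacity to $v_p$, then $H_m(v_1^j,\dots,v_m^j) \to H_m(v_1,\dots,v_m)$ weakly as Borel measures on $X$. Applied to $v_p^j := u_p^j$, this gives the usual portmanteau inequalities
$$
\liminf_{j} \int_U H_m(u_1^j,\dots,u_m^j) \geq \int_U H_m(u_1,\dots,u_m), \quad U \text{ open},
$$
and the reverse inequality with $\limsup$ for closed sets.

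Second, I would establish a uniform Chern--Levine--Nirenberg type bound: there exists a constant $C=C(M)$ such that for every Borel set $B\subset X$ and every bounded $\omega$-$m$-sh functions $v_1,\dots,v_m$ with $-M\leq v_p\leq 0$, one has
$$
H_m(v_1,\dots,v_m)(B) \leq C\,\capa_m(B).
$$
This is obtained by polarizing $(\omega + dd^c v_p)^m \wedge \omega^{n-m}$ and comparing with the defining integrals of $\capa_m$, using that $v_p/M$ lies in the normalized range $[-1,0]$. The key point is that $C$ depends only on the uniform bound $M$, so the estimate applies simultaneously to the limit measure $H_m(u_1,\dots,u_m)$ and to every measure $H_m(u_1^j,\dots,u_m^j)$ in the sequence.

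Third, for the quasi-open case, fix $\varepsilon>0$ and choose an open $U$ with $\capa_m((E\setminus U)\cup(U\setminus E))<\varepsilon$. Writing $\int_E = \int_U + \int_{E\setminus U} - \int_{U\setminus E}$ and applying the uniform capacity domination from step two both to the limit measure and to each $H_m(u_1^j,\dots,u_m^j)$, we have
$$
\int_E H_m(u_1,\dots,u_m) \leq \int_U H_m(u_1,\dots,u_m) + C\varepsilon,
$$
$$
\int_E H_m(u_1^j,\dots,u_m^j) \geq \int_U H_m(u_1^j,\dots,u_m^j) - C\varepsilon.
$$
Taking $\liminf_j$ of the second line, combining with the portmanteau inequality on $U$ from step one, and then with the first line, yields
$$
\liminf_j \int_E H_m(u_1^j,\dots,u_m^j) \geq \int_E H_m(u_1,\dots,u_m) - 2C\varepsilon,
$$
and letting $\varepsilon\to 0$ gives the first inequality. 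The quasi-closed case is symmetric: approximate $K$ by a closed set $F$ with $\capa_m(K\triangle F)<\varepsilon$ and invoke the $\limsup$ portmanteau inequality on $F$.

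The main obstacle is step two: obtaining a uniform capacity estimate for mixed Hessian measures that is robust along a sequence converging only in capacity. Everything else is a standard approximation argument, and the weak convergence input of step one is already part of the foundational Hessian potential theory developed in the earlier references.
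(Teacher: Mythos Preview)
Your proposal is correct and is precisely the standard argument the paper has in mind when it writes ``the proof \ldots\ is an obvious modification of the Monge--Amp\`ere case, see \cite{GZbook}, \cite[Corollary 2.9]{DDL1}.'' The three ingredients you isolate --- weak convergence of $H_m$ along uniformly bounded sequences converging in $m$-capacity, a uniform domination $H_m(v_1,\dots,v_m)(B)\le C(M)\,\capa_m(B)$ for $-M\le v_p\le 0$, and approximation of quasi-open/quasi-closed sets by open/closed sets up to small capacity --- are exactly the ones used in those references, so there is no genuine difference in route.

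One small remark: you flag Step~2 as ``the main obstacle'', but it is in fact routine. With $w:=(v_1+\cdots+v_m)/(mM)$ one has $-1\le w\le 0$, $w\in\SH_m(X,\omega)$, and expanding $H_m(w)$ as a sum of non-negative mixed terms (G{\aa}rding) gives $H_m(v_1,\dots,v_m)\le \frac{(mM)^m}{m!}\,H_m(w)$, hence the bound by $\capa_m$ directly from the definition. So the estimate is elementary and uniform in $j$, which is all you need.
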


The proof of the above theorem is an obvious modification of the Monge-Amp\`ere case, see \cite{GZbook}, \cite[Corollary 2.9]{DDL1}.

\medskip 

The following result, called the plurifine locality, will be used several times in this paper. 

\begin{lemma}\label{lem: Identity principle}
	Assume that $u_1,...,u_m,v_1,...,v_m$ are $\omega$-$m$-sh functions on $X$ and $\Omega\subset X$ is a quasi-open set such that $u_p=v_p$ on $\Omega$, for $p=1,...,m$. Then 
	$$
	{\bf 1}_{\Omega} H_m(u_1,...,u_m) = {\bf 1}_{\Omega} H_m(v_1,...,v_m). 
	$$
\end{lemma}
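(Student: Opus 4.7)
The plan is to reduce to bounded $\omega$-$m$-sh potentials, then to changing a single variable at a time by symmetry of $H_m$, and finally to apply the plurifine locality for the $\max$ operation recalled earlier in the section, after a small perturbation that turns the equality hypothesis into a strict inequality.

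For the reduction to bounded potentials, set $u_p^s := \max(u_p, -s)$ and $v_p^s := \max(v_p, -s)$. The equality $u_p = v_p$ on $\Omega$ gives $u_p^s = v_p^s$ on $\Omega$ and $\Omega \cap \bigcap_p \{u_p > -s\} = \Omega \cap \bigcap_p \{v_p > -s\}$; granted the bounded case, the unbounded statement follows by passing to $s \to \infty$ in the defining identity $H_m(u_1,\ldots,u_m) = \lim_s \mathbf{1}_{U^s} H_m(u_1^s,\ldots,u_m^s)$. Using symmetry of $H_m$ in its arguments, one further reduces to the case of changing a single variable: for bounded $\omega$-$m$-sh $u, v$ with $u = v$ on $\Omega$ and bounded $\omega$-$m$-sh $u_2, \ldots, u_m$,
\begin{equation*}
\mathbf{1}_\Omega H_m(u, u_2, \ldots, u_m) = \mathbf{1}_\Omega H_m(v, u_2, \ldots, u_m).
\end{equation*}

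For this reduced statement, fix $\varepsilon > 0$ and apply the recalled plurifine locality to the $m$ pairs $(u + \varepsilon, v), (u_2, u_2 - 1), \ldots, (u_m, u_m - 1)$. The set $\bigcap_p \{u_p > v_p\}$ becomes $\{u + \varepsilon > v\} \supset \Omega$, the extra $\max$'s in the $p \geq 2$ slots equal $u_p$, and $H_m$ is invariant under adding constants to potentials, so
\begin{equation*}
\mathbf{1}_\Omega H_m(\max(u + \varepsilon, v), u_2, \ldots, u_m) = \mathbf{1}_\Omega H_m(u, u_2, \ldots, u_m).
\end{equation*}
Letting $\varepsilon \to 0^+$, the decreasing limit $\max(u + \varepsilon, v) \searrow \max(u, v)$ produces weak convergence of the associated Hessian measures; combining the liminf bound on the quasi-open $\Omega$ with the limsup bound on the quasi-closed complement $X \setminus \Omega$ from Theorem \ref{thm: convergence quasi set}, together with the Stokes-type conservation $\int_X H_m(\cdots) = \int_X \omega^n$ valid for bounded potentials, both inequalities are forced to be equalities. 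A symmetric argument with $u$ and $v$ interchanged then gives
\begin{equation*}
\int_\Omega H_m(u, u_2, \ldots, u_m) = \int_\Omega H_m(\max(u, v), u_2, \ldots, u_m) = \int_\Omega H_m(v, u_2, \ldots, u_m).
\end{equation*}

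To upgrade equality of integrals over $\Omega$ to equality of restricted measures, note that the same argument applies verbatim to any quasi-open $\Omega' \subset \Omega$, since $u = v$ on $\Omega'$ as well. In particular, the two measures assign equal mass to every $V \cap \Omega$ with $V$ open in $X$; as $\{V \cap \Omega : V \text{ open}\}$ is a $\pi$-system generating the Borel $\sigma$-algebra on $\Omega$ and both measures are finite, the restricted measures coincide. The main technical point is expected to be the passage $\varepsilon \to 0$: only weak convergence of Hessian measures is available a priori, and what pins down the sharp value on the quasi-open set $\Omega$ is the two-sided use of Theorem \ref{thm: convergence quasi set} combined with the total-mass conservation, which is available only in the bounded regime.
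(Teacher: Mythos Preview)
Your overall architecture---reduce to bounded potentials, perturb by $\varepsilon$ to invoke the plurifine locality for $\max$, then pass to the limit---matches the paper's. The reduction to a single variable and the final $\pi$-system argument are nice touches that the paper does not need (it works with all $m$ slots simultaneously and appeals to inner regularity), but they are valid.

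There is, however, a genuine gap in your limit step. You assert that the liminf bound on the quasi-open $\Omega$ together with the limsup bound on the quasi-closed complement $X\setminus\Omega$, combined with total-mass conservation, forces equality. In fact the two bounds give the \emph{same} inequality: since $\int_\Omega H_m(\max(u+\varepsilon,v),u_2,\ldots)=\int_\Omega H_m(u,u_2,\ldots)=:A$ is constant in $\varepsilon$ and $\int_{X\setminus\Omega}=1-A$, the limsup inequality on $X\setminus\Omega$ reads $1-A\le 1-\int_\Omega H_m(\max(u,v),u_2,\ldots)$, i.e.\ $A\ge\int_\Omega H_m(\max(u,v),u_2,\ldots)$, which is precisely the liminf inequality on $\Omega$. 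So you only obtain
\[
\int_\Omega H_m(u,u_2,\ldots)\ \ge\ \int_\Omega H_m(\max(u,v),u_2,\ldots),
\]
and the symmetric argument gives $\int_\Omega H_m(v,u_2,\ldots)\ge\int_\Omega H_m(\max(u,v),u_2,\ldots)$; these do not combine to yield $\int_\Omega H_m(u,\ldots)=\int_\Omega H_m(v,\ldots)$.

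The paper obtains the missing reverse inequality by testing not on $X\setminus\Omega$ but on compact $K\subset\Omega$: from ${\bf 1}_K H_m(w_1^\varepsilon,\ldots)={\bf 1}_K H_m(u_1,\ldots)$ and the limsup bound on the (quasi-)closed set $K$ one gets ${\bf 1}_K H_m(w_1,\ldots)\ge{\bf 1}_K H_m(u_1,\ldots)$, and inner regularity of the Hessian measure then upgrades this to all of $\Omega$. Replacing your complement argument by this step closes the gap.
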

\begin{proof}
The proof for bounded functions is classical, see \cite[Corollary 4.3]{BT87} and the discussion in \cite[Section 1.2]{BEGZ10}. For convenience we repeat it here. 
For $\varepsilon>0$ set $w_p^{\varepsilon} := \max(u_p+\varepsilon,v_p)$, $w_p:= \max(u_p,v_p)$. Then $\Omega \subset \cap_{p=1}^m \{u_p+\varepsilon>v_p\}$, hence by the pluripotential maximum principle (see \cite[Theorem 3.14]{Lu13}, \cite[Theorem 3.27]{GZbook}), 
	$$
	{\bf 1}_{\Omega} H_m(w_1^{\varepsilon},...,w_m^{\varepsilon}) = {\bf 1}_{\Omega} H_m(u_1,...,u_m). 
	$$
	Since $\Omega$ is quasi open and the functions $u_p,v_p$ are uniformly bounded, letting $\varepsilon\to 0^+$   we obtain 
	$$
	{\bf 1}_{\Omega} H_m(w_1,...,w_m) \leq  {\bf 1}_{\Omega} H_m(u_1,...,u_m). 
	$$
	For a fixed compact subset $K\Subset \Omega$ we have
	$$
	{\bf 1}_{K} H_m(w_1^{\varepsilon},...,w_m^{\varepsilon}) = {\bf 1}_{K} H_m(u_1,...,u_m). 
	$$
	Letting $\varepsilon\to 0^+$ we arrive at 
	$$
	{\bf 1}_{K} H_m(w_1,...,w_m) \geq  {\bf 1}_{K} H_m(u_1,...,u_m). 
	$$
	Since the Hessian measure $H_m(u_1,...,u_m)$ is inner regular, we can conclude that 
	$$
	{\bf 1}_{\Omega} H_m(w_1,...,w_m)  =  {\bf 1}_{\Omega} H_m(u_1,...,u_m). 
	$$
	Changing the role of $u_p$ and $v_p$ we obtain the result for bounded functions.

For the general case we set $u_p^t := \max(u_p,-t)$, for $t>0$. From the previous step we have
	$$
	{\bf 1}_{\Omega} {\bf 1}_{U^t}H_m(u_1^t,...,u_m^t) ={\bf 1}_{\Omega} {\bf 1}_{V^t} H_m(v_1^t,...,v_m^t),	
	$$
where $U^t := \cap_{p=1}^m \{u_p>-t\}$, $V^t := \cap_{p=1}^m \{v_p>-t\}$. Now, we let $t\to +\infty$ to conclude the proof. 
\end{proof}

\begin{coro}\label{cor: plurifine}
	Assume that $u_1,...,u_m, v_1,...,v_m$ are $\omega$-$m$-sh on $X$. Then 
	$$
	{\bf 1}_{\Omega} H_m(\max(u_1,v_1)...,\max(u_m,v_m)) = {\bf 1}_{\Omega} H_m(u_1,...,u_m),
	$$
	where $\Omega:= \cap_{p=1}^m\{u_p>v_p\}$. 
\end{coro}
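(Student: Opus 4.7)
The plan is to deduce Corollary \ref{cor: plurifine} as an almost immediate consequence of the plurifine locality Lemma \ref{lem: Identity principle}. The set $\Omega := \cap_{p=1}^m\{u_p>v_p\}$ is a finite intersection of sets of the form $\{u_p>v_p\}$ where both $u_p$ and $v_p$ are $\omega$-$m$-sh, so by the observation recorded just before Theorem \ref{thm: convergence quasi set} (using quasi-continuity of $\omega$-$m$-sh functions, cf. \cite{Lu13}), $\Omega$ is quasi-open.

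Next, on $\Omega$ we have $u_p > v_p$ for every $p \in \{1,\ldots,m\}$, and therefore the pointwise identity $\max(u_p,v_p) = u_p$ holds on $\Omega$ for each $p$. Thus the two $m$-tuples $(u_1,\ldots,u_m)$ and $(\max(u_1,v_1),\ldots,\max(u_m,v_m))$ coincide on the quasi-open set $\Omega$, component by component.

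I would then simply invoke Lemma \ref{lem: Identity principle} with these two $m$-tuples of $\omega$-$m$-sh functions, which delivers exactly the stated equality
$$
\mathbf{1}_{\Omega} H_m(\max(u_1,v_1),\ldots,\max(u_m,v_m)) = \mathbf{1}_{\Omega} H_m(u_1,\ldots,u_m).
$$
There is really no obstacle here: the whole technical content (truncation by $\max(\cdot,-t)$, passage to the limit $t\to +\infty$, handling of the bounded case via \cite{BT87}) has already been absorbed into Lemma \ref{lem: Identity principle}, so the corollary is a one-line application once quasi-openness of $\Omega$ and the identity $\max(u_p,v_p)=u_p$ on $\Omega$ have been noted. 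The only mildly delicate point — and thus the step I would double-check — is that the lemma is indeed stated for general (possibly unbounded) $\omega$-$m$-sh functions, which the proof above confirms.
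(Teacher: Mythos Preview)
Your proposal is correct and matches the paper's approach: the corollary is stated without proof immediately after Lemma \ref{lem: Identity principle}, so the intended argument is precisely the one you give --- observe that $\Omega$ is quasi-open and that $\max(u_p,v_p)=u_p$ on $\Omega$, then apply the lemma. There is nothing to add.
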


\begin{lemma}\label{lem: Dem inequality}
If $u,v \in \SH_m(X,\omega)$ then 
$$
H_m(\max(u,v)) \geq {\bf 1}_{\{u>v\}} H_m(u) + {\bf 1}_{\{u\leq v\}} H_m(v). 
$$
\end{lemma}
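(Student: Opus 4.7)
The plan is to first establish the inequality for bounded $u,v$, splitting $X$ as $\{u>v\} \sqcup \{u \leq v\}$ and treating each piece by plurifine locality (with an $\varepsilon$-perturbation on the second), and then to extend to arbitrary $u, v \in \SH_m(X,\omega)$ via the canonical truncation.

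Assume first that $u,v$ are bounded, and set $w := \max(u,v)$. On the quasi-open set $\{u>v\}$ one has $w=u$, so Lemma \ref{lem: Identity principle} gives
$$
{\bf 1}_{\{u>v\}} H_m(w) = {\bf 1}_{\{u>v\}} H_m(u).
$$
For the complementary piece I introduce $w_\varepsilon := \max(u,\, v+\varepsilon)$ for $\varepsilon > 0$. On the quasi-open set $\{u < v+\varepsilon\}$ the function $w_\varepsilon$ agrees with $v+\varepsilon$, so plurifine locality together with translation invariance of $H_m$ yields
$$
{\bf 1}_{\{u < v+\varepsilon\}} H_m(w_\varepsilon) = {\bf 1}_{\{u < v+\varepsilon\}} H_m(v).
$$
Since $\{u \leq v\} \subset \{u < v+\varepsilon\}$, for every quasi-closed $K \subset \{u \leq v\}$ one has $\int_K H_m(w_\varepsilon) = \int_K H_m(v)$. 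As $\varepsilon \downarrow 0$, the uniformly bounded sequence $w_\varepsilon$ decreases to $w$, and quasi-continuity yields convergence in $m$-capacity; Theorem \ref{thm: convergence quasi set} applied to the quasi-closed $K$ then gives
$$
\int_K H_m(v) \,=\, \limsup_{\varepsilon \to 0^+} \int_K H_m(w_\varepsilon) \,\leq\, \int_K H_m(w).
$$
Inner regularity of the Hessian measure upgrades this to ${\bf 1}_{\{u \leq v\}} H_m(w) \geq {\bf 1}_{\{u \leq v\}} H_m(v)$, and adding the two pieces proves the statement in the bounded case.

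For general $u,v \in \SH_m(X,\omega)$, I apply the bounded estimate to $u^s := \max(u,-s)$ and $v^s := \max(v,-s)$, using $\max(u^s,v^s) = w^s$. Multiplying through by ${\bf 1}_{\{w>-s\}}$ and invoking the definition of $H_m$ as the increasing limit of ${\bf 1}_{\{\cdot > -s\}} H_m(\cdot^s)$, it suffices to verify that the sets $\{u^s > v^s\} \cap \{w > -s\}$ and $\{u^s \leq v^s\} \cap \{w > -s\}$ increase, modulo $m$-polar subsets, to $\{u>v\}$ and $\{u \leq v\}$ respectively. This is a direct pointwise check (once $s$ exceeds $-u(x)$ and $-v(x)$ one has $u^s(x)=u(x)$, $v^s(x)=v(x)$), and since Hessian measures do not charge $m$-polar sets, the monotone passage $s \to +\infty$ yields the inequality.

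The principal difficulty is the behaviour on the contact set $\{u=v\}$, where neither $w=u$ nor $w=v$ identically and so plurifine locality cannot be applied directly to $w$; the $\varepsilon$-perturbation is precisely what reduces the analysis to the quasi-open set $\{u<v+\varepsilon\}$ strictly containing $\{u \leq v\}$, and the upper-semicontinuity of Hessian mass on quasi-closed sets from Theorem \ref{thm: convergence quasi set} is exactly what allows the identification to survive the limit.
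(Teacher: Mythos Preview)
Your proof is correct and follows the same two-stage strategy as the paper: establish the bounded case, then pass to general $u,v$ by truncation. The paper simply asserts the bounded inequality for $u^t,v^t$ as known, whereas you supply a complete argument via the $\varepsilon$-perturbation $w_\varepsilon=\max(u,v+\varepsilon)$ and the upper-semicontinuity on quasi-closed sets (Theorem \ref{thm: convergence quasi set}); this is a valid and self-contained justification.

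For the extension, the paper multiplies by ${\bf 1}_{U^t}$ with $U^t=\{\min(u,v)>-t\}$ rather than your ${\bf 1}_{\{w>-s\}}=\{\max(u,v)>-s\}$. The paper's choice is cleaner: on $U^t$ one has $u^t=u$, $v^t=v$, $\phi^t=\phi$ simultaneously, so the indicator sets $\{u^t>v^t\}$, $\{u^t\leq v^t\}$ coincide with $\{u>v\}$, $\{u\leq v\}$ without any case analysis, and one passes to the limit directly since $U^t$ exhausts $X$ up to the $m$-polar set $\{u=-\infty\}\cup\{v=-\infty\}$. Your version requires the pointwise verification you sketch (that $\{u^s>v^s\}\cap\{w>-s\}=\{u>-s\}\cap\{u>v\}$ and $\{u^s\leq v^s\}\cap\{w>-s\}=\{v>-s\}\cap\{u\leq v\}$), which is correct but slightly more laborious. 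Either cutoff works.
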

\begin{proof}
For $t>0$ set $u^t := \max(u,-t), v^t := \max(v,-t), \phi^t := \max(u^t,v^t)$. Then 
$$
H_m(\phi^t) \geq {\bf 1}_{\{u^t>v^t\}} H_m(u^t) + {\bf 1}_{\{u^t\leq v^t\}} H_m(v^t). 
$$
Multiplying both sides with ${\bf 1}_{U^t}$, where $U^t:=\{\min(u,v) >-t\}$, and using Lemma \ref{lem: Identity principle}, we obtain 
$$
{\bf 1}_{U^t}H_m(\phi) = {\bf 1}_{U^t} H_m(\phi^t) \geq  {\bf 1}_{U^t}{\bf 1}_{\{u>v\}} H_m(u) + {\bf 1}_{U^t}{\bf 1}_{\{u\leq v\}} H_m(v).
$$
Letting $t\to +\infty$ we arrive at the conclusion. 
\end{proof}

\begin{prop} \label{prop: viscosity}
If $u,v\in \SH_m(X,\omega)$ and  $u\leq v$, then 
$$
{\bf 1}_{\{u=v\}} H_m(u) \leq {\bf 1}_{\{u=v\}} H_m(v). 
$$
\end{prop}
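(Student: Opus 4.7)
The plan is first to reduce to the case where $u$ and $v$ are bounded, and then in the bounded setting to exploit the plurifine locality (Corollary \ref{cor: plurifine}) via the upper approximation $\phi_c := \max(u+c,v)$ with $c\downarrow 0$.

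For the reduction, set $u^t := \max(u,-t)$ and $v^t := \max(v,-t)$. Since $v\geq u$, on the quasi-open set $\{u > -t\}$ one has $u = u^t$ and $v = v^t$, so Lemma \ref{lem: Identity principle} gives $\mathbf{1}_{\{u>-t\}} H_m(u) = \mathbf{1}_{\{u>-t\}} H_m(u^t)$ and the analogue for $v$. Moreover $\{u^t = v^t\}\cap\{u>-t\} = \{u=v\}\cap\{u>-t\}$. If the bounded inequality $\mathbf{1}_{\{u^t=v^t\}} H_m(u^t) \leq \mathbf{1}_{\{u^t=v^t\}} H_m(v^t)$ holds, then multiplying by $\mathbf{1}_{\{u>-t\}}$ and letting $t\to+\infty$ yields the claim, since $H_m(u)$ and $H_m(v)$ do not charge the $m$-polar set $\{u=-\infty\}$.

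Assume now that $u,v$ are bounded with $u\leq v$, and for $c>0$ set $\phi_c := \max(u+c, v)$, an $\omega$-$m$-sh function satisfying $v\leq \phi_c\leq v+c$. Since $\{u=v\}\subset \{u+c>v\}$, Corollary \ref{cor: plurifine} yields
\[
\mathbf{1}_{\{u=v\}} H_m(\phi_c) \;=\; \mathbf{1}_{\{u=v\}} H_m(u+c) \;=\; \mathbf{1}_{\{u=v\}} H_m(u),
\]
so the measure on the left is independent of $c$. As $\phi_c\to v$ uniformly when $c\to 0^+$, we have $H_m(\phi_c)\to H_m(v)$ weakly on $X$, and for any continuous $f\geq 0$,
\[
\int_X f\, \mathbf{1}_{\{u=v\}} H_m(u) \;=\; \int_X f\, \mathbf{1}_{\{u=v\}} H_m(\phi_c) \;\leq\; \int_X f\, H_m(\phi_c) \;\xrightarrow[c\to 0^+]{}\; \int_X f\, H_m(v).
\]
Thus $\mathbf{1}_{\{u=v\}} H_m(u) \leq H_m(v)$ as positive measures on $X$; being supported on $\{u=v\}$, the left-hand side is in turn bounded by $\mathbf{1}_{\{u=v\}} H_m(v)$, which is the desired inequality.

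The main delicate point is this passage to the limit $c\to 0^+$: since the indicator $\mathbf{1}_{\{u=v\}}$ is merely Borel and $\{u=v\}$ is only quasi-closed, one cannot directly pair weak convergence of $H_m(\phi_c)$ with integration against $\mathbf{1}_{\{u=v\}}$. The support trick above circumvents this by first establishing the coarser domination $\mathbf{1}_{\{u=v\}} H_m(u) \leq H_m(v)$, from which the stated measure inequality follows automatically.
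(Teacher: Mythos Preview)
Your proof is correct and close in spirit to the paper's: the approximating family $\phi_c=\max(u+c,v)$ coincides with the paper's $u_\varepsilon=\max(u,v-\varepsilon)$ up to an additive constant, and the reduction to the bounded case is identical. The genuine difference is in the passage to the limit. The paper invokes Theorem \ref{thm: convergence quasi set} directly on the quasi-closed set $\{u=v\}$ to obtain $\limsup_{\varepsilon\to 0}\int_{\{u=v\}}H_m(u_\varepsilon)\leq \int_{\{u=v\}}H_m(v)$, whereas you sidestep the quasi-closed machinery entirely: you first drop the indicator and use only weak convergence against continuous test functions to get $\mathbf{1}_{\{u=v\}}H_m(u)\leq H_m(v)$, and then recover the restricted inequality from the elementary measure-theoretic fact that a measure supported on $E$ and dominated by $\nu$ is automatically dominated by $\mathbf{1}_E\nu$. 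This support trick is a nice simplification; the paper's route, on the other hand, yields the sharper intermediate statement $\mathbf{1}_{\{u=v\}}H_m(v)\geq \limsup_\varepsilon \mathbf{1}_{\{u=v\}}H_m(u_\varepsilon)$ directly, at the cost of relying on the quasi-closed convergence result.
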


Intuitively, $v$ can be thought of as an upper test function for $u$ on the contact set $\{u=v\}$, see \cite{EGZ11, Lu13JFA} for more details on the viscosity theory.  

\begin{proof}
We first assume that $u,v$ are bounded. For $\varepsilon>0$ set 
$
u_{\varepsilon} := \max(u,v-\varepsilon).
$
By Lemma \ref{lem: Dem inequality} we have 
$$
{\bf 1}_{\{u=v\}} H_m(u_{\varepsilon}) \geq {\bf 1}_{\{u=v\}}{\bf 1}_{\{u\geq v-\varepsilon\}} H_m(u) \geq {\bf 1}_{\{u=v\}} H_m(u).  
$$ 
Since the set $\{u=v\}$ is quasi-closed, and $u_{\varepsilon}$ is uniformly bounded, we can invoke Theorem \ref{thm: convergence quasi set} to get
$$
{\bf 1}_{\{u=v\}} H_m(v) \geq \limsup_{\varepsilon\to 0} {\bf 1}_{\{u=v\}} H_m(u_{\varepsilon}) \geq {\bf 1}_{\{u=v\}} H_m(u). 
$$
To treat the general case we set 
$$
u^t := \max(u,-t),\ v^t := \max(v,-t), \ U^t := \{u>-t\}.
$$ 
The first step gives 
$
{\bf 1}_{U^t}{\bf 1}_{\{u^t=v^t\}} H_m(v_t)\geq  {\bf 1}_{U^t}{\bf 1}_{\{u^t=v^t\}} H_m(u^t). 
$
Using Lemma \ref{lem: Identity principle} we  then have that
$
{\bf 1}_{U^t}{\bf 1}_{\{u=v\}} H_m(v)\geq  {\bf 1}_{U^t}{\bf 1}_{\{u=v\}} H_m(u). 
$
We finally let $t\to +\infty$ to arrive at the conclusion. 
\end{proof}

\begin{lemma}\label{lem: multilinearity}
	Assume that $u_1,...,u_m$ are $\omega$-$m$-sh on $X$ and $t_1,...,t_m \in [0,1]$ with $\sum_{p=1}^m t_p=1$. Then 
	$$
	H_m\left (\sum_{p=1}^m t_p u_p \right ) = \sum_{\sigma \in \Sigma} t_{\sigma(1)}...t_{\sigma(m)}H_m(u_{\sigma(1)},...,u_{\sigma(m)}),
	$$
	where $\Sigma$ is the set of all maps  $\sigma : \{1,...,m\} \rightarrow \{1,...,m\}$. 
\end{lemma}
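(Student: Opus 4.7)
The plan is to reduce the identity to the bounded setting, where it is a direct consequence of multilinearity of Bedford--Taylor wedge products, and then extend to arbitrary $\omega$-$m$-sh functions by the canonical truncation $u_p^s:=\max(u_p,-s)$, using plurifine locality (Lemma \ref{lem: Identity principle}) together with the fact that the non-$m$-polar Hessian product does not charge $m$-polar sets.

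The key algebraic observation is that because $\sum_p t_p = 1$, we have the pointwise equality of currents
$$
\omega + dd^c\!\left(\sum_{p=1}^m t_p u_p\right) \;=\; \sum_{p=1}^m t_p\,(\omega + dd^c u_p).
$$
When all $u_p$ are bounded, set $\alpha_p := \omega + dd^c u_p$ and expand the $m$-fold wedge $\bigwedge_{k=1}^m\bigl(\sum_p t_p \alpha_p\bigr)$ by multilinearity of the Bedford--Taylor product. This yields
$$
\sum_{\sigma\in\Sigma} t_{\sigma(1)}\cdots t_{\sigma(m)}\; \alpha_{\sigma(1)}\wedge\cdots\wedge\alpha_{\sigma(m)},
$$
the $m^m$ terms being exactly indexed by the maps $\sigma\colon\{1,\dots,m\}\to\{1,\dots,m\}$. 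Wedging with $\omega^{n-m}$ proves the identity in the bounded case.

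For the general case, set $U^s := \bigcap_{p=1}^m\{u_p>-s\}$. On $U^s$ one has $u_p^s=u_p$ for every $p$, so $\sum_p t_p u_p^s = \sum_p t_p u_p$; since this common value exceeds $-s$ on $U^s$, it also coincides there with $\max\bigl(\sum_p t_p u_p,-s\bigr)$. Applying the bounded identity to the tuple $(u_p^s)$, multiplying both sides by ${\bf 1}_{U^s}$, and invoking Lemma \ref{lem: Identity principle} to replace each truncated Hessian product by the corresponding non-truncated one, we obtain
$$
{\bf 1}_{U^s}\, H_m\!\left(\sum_{p=1}^m t_p u_p\right) \;=\; {\bf 1}_{U^s}\sum_{\sigma\in\Sigma} t_{\sigma(1)}\cdots t_{\sigma(m)}\, H_m(u_{\sigma(1)},\dots,u_{\sigma(m)}).
$$
The complement of $\bigcup_s U^s$ is contained in $\bigcup_p\{u_p=-\infty\}$, which is $m$-polar, and neither side charges $m$-polar sets (by the lemma on non-charging of $m$-polar sets proved earlier in the section). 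Letting $s\to\infty$ and using monotone convergence on both sides yields the claimed identity.

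The only genuine point requiring care is the plurifine localization step: one must verify that ${\bf 1}_{U^s}\, H_m\bigl(\sum_p t_p u_p^s\bigr)$ truly agrees with ${\bf 1}_{U^s}\, H_m\bigl(\sum_p t_p u_p\bigr)$ in the sense of the non-$m$-polar product, which amounts to checking that $U^s$ lies inside $\{\sum_p t_p u_p >-s\}$ and that the truncated sum matches $\max(\sum_p t_p u_p,-s)$ there. Both facts are immediate from $\sum_p t_p=1$, after which no further analysis is required.
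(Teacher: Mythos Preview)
Your proof is correct and follows essentially the same route as the paper: establish the identity in the bounded case by multilinearity of the Bedford--Taylor product (using $\sum_p t_p=1$ to write $\omega+dd^c\phi=\sum_p t_p(\omega+dd^c u_p)$), then localize on $U^s=\bigcap_p\{u_p>-s\}$ via Lemma~\ref{lem: Identity principle} and let $s\to+\infty$. The paper's argument is identical up to notation, though it leaves the final limiting step implicit where you spell out the $m$-polar non-charging.
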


\begin{proof}
	Fix $C>0$ and set 
	$$
	U^C:= \cap_{p=1}^m \{u_p>-C\}, \ \phi := \sum_{p=1}^m t_pu_p, \ \phi^C := \max(\phi,-C).
	$$ 
	Then $\phi >-C$ on $U^C$, hence by Lemma \ref{lem: Identity principle} we have 
	\begin{eqnarray*}
		 {\bf 1}_{U^C}  H_m(\phi)  &= & {\bf 1}_{U^C} H_m\left (\sum_{p=1}^m t_p u_p^C\right )\\
		 & =&   {\bf 1}_{U^C} \sum_{\sigma \in \Sigma} t_{\sigma(1)}...t_{\sigma(m)}H_m(u_{\sigma(1)}^C,...,u_{\sigma(m)}^C)\\
		 &=&  {\bf 1}_{U^C}  \sum_{\sigma \in \Sigma} t_{\sigma(1)}...t_{\sigma(m)}H_m(u_{\sigma(1)},...,u_{\sigma(m)}).
	\end{eqnarray*}
Letting $C\to +\infty$ we arrive at the conclusion. 
\end{proof}
\begin{lemma}[Mixed Hessian inequality]\label{lem: mixed Hes ineq}
Assume that $\mu$ is a  non-$m$-polar positive measure and $f_1,...,f_m$ are in $L^1(X,\mu)$. If $u_1,...,u_m \in \SH_m(X,\omega)$ satisfy $H_m(u_p) \geq f_p \mu$, $p=1,...,m$ then 
$$
H_m(u_1,...,u_m) \geq (f_1...f_m)^{1/m} \mu. 
$$
\end{lemma}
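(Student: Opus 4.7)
The plan is to reduce to the bounded case by truncation and then invoke a pointwise G\aa rding-type inequality for mixed $m$-positive forms.

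For the reduction, set $u_p^t := \max(u_p, -t)$ and $U^t := \bigcap_{p=1}^m \{u_p > -t\}$. Lemma~\ref{lem: Dem inequality}, applied with the constant function $-t$ (whose Hessian measure vanishes), gives
$$
H_m(u_p^t) \;\geq\; \mathbf{1}_{\{u_p > -t\}} H_m(u_p) \;\geq\; \mathbf{1}_{\{u_p > -t\}} f_p\, \mu,
$$
so on the quasi-open set $U^t$ each $H_m(u_p^t)$ dominates $f_p\mu$. Granting the bounded version of the statement, applied to the uniformly bounded $u_p^t$ with weights $\mathbf{1}_{U^t} f_p$, one obtains $H_m(u_1^t,\ldots,u_m^t) \geq \mathbf{1}_{U^t} (f_1\cdots f_m)^{1/m}\mu$ on $X$. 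Plurifine locality (Lemma~\ref{lem: Identity principle}) identifies $\mathbf{1}_{U^t} H_m(u_1^t,\ldots,u_m^t)$ with $\mathbf{1}_{U^t} H_m(u_1,\ldots,u_m)$. Letting $t \to \infty$, the complement $X \setminus \bigcup_t U^t$ lies in the $m$-polar set $\bigcup_p \{u_p = -\infty\}$, which $\mu$ ignores by hypothesis; and $H_m(u_1,\ldots,u_m)$ also ignores it, since $H_m(u_1,\ldots,u_m) \leq m^m H_m\bigl(\tfrac{1}{m}\textstyle\sum_p u_p\bigr)$ by Lemma~\ref{lem: multilinearity}, and the right-hand side vanishes on $m$-polar sets by the preceding lemma. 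The inequality therefore extends to all of $X$.

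The bounded case rests on the pointwise G\aa rding/Alexandrov--Fenchel inequality for $m$-positive Hermitian $(1,1)$-forms: for any such $\alpha_1,\ldots,\alpha_m$ at a point of $X$,
$$
\alpha_1 \wedge \cdots \wedge \alpha_m \wedge \omega^{n-m} \;\geq\; \prod_{p=1}^m \bigl(\alpha_p^m \wedge \omega^{n-m}\bigr)^{1/m},
$$
interpreted as an inequality between densities relative to $\omega^n$. For smooth $\omega$-$m$-sh potentials this yields the conclusion directly; for bounded ones it is promoted to an inequality of Borel measures by combining local smooth decreasing regularization (cf.\ \cite[Theorem~3.18]{GN18}) with the continuity of the mixed Hessian operator along bounded decreasing sequences (Bedford--Taylor theory in the $m$-Hessian setting).

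The principal obstacle is this last step: a pointwise density inequality need not survive weak-$\ast$ limits of measures. The standard workaround, following Dinew's polarization argument in the Monge--Amp\`ere case, is to test against non-negative continuous functions and exploit the multilinearity of the mixed operator (Lemma~\ref{lem: multilinearity}) together with the strong convergence of the truncated measures $\mathbf{1}_{U^t} H_m(u_1^t,\ldots,u_m^t)$; this strategy carries over to the $m$-Hessian framework provided the local regularization preserves $m$-positivity, which is exactly what is ensured by \cite[Theorem~3.18]{GN18}.
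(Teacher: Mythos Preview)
Your reduction from the general case to the bounded case is correct and is exactly the paper's approach: the paper cites \cite[Proposition~1.11]{BEGZ10} for this truncation argument, and your use of Lemma~\ref{lem: Dem inequality}, Lemma~\ref{lem: Identity principle}, and the non-$m$-polarity hypothesis on $\mu$ reproduces that argument faithfully.

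The gap is in your treatment of the bounded case. You correctly identify the obstacle --- a pointwise G\aa rding inequality for smooth forms does not survive weak limits --- but your proposed workaround is muddled. What you describe in the last paragraph (testing against continuous functions, multilinearity, ``strong convergence of the truncated measures $\mathbf{1}_{U^t}H_m(u_1^t,\ldots,u_m^t)$'') is not Dinew's argument; the truncated measures belong to the reduction step you already completed, not to the bounded case. The actual difficulty is that if bounded $u_p$ satisfy $H_m(u_p)\geq f_p\mu$ and you regularize by smooth $u_p^j\searrow u_p$, you have no lower bound on $H_m(u_p^j)$ in terms of $f_p\mu$, so the pointwise inequality for smooth potentials cannot be applied. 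Dinew's method (and its Hessian adaptation in \cite{DL15}, which the paper simply cites) proceeds quite differently: one localizes, solves auxiliary Dirichlet problems with the measures $f_p\mu$ as data, and uses the comparison principle together with an inductive or iterative scheme. None of this appears in your sketch. In short, the paper defers the bounded case to \cite{DL15} with a one-line citation; your attempt to outline it does not capture the right mechanism.
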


\begin{proof}
Having the mixed Hessian inequality for bounded $\omega$-$m$-sh functions \cite{DL15},  the proof of the lemma is identical to that of \cite[Proposition 1.11]{BEGZ10}.
\end{proof}

\subsection{Finite energy classes}
The class $\mathscr{E}(X,\omega,m)$ consists of functions $u\in \SH_m(X,\omega)$ such that $\int_X H_m(u)=1$. The class $\mathscr{E}^1(X,\omega,m)$ consists of $u \in \mathscr{E}(X,\omega,m)$ such that $\int_X |u|H_m(u) <+\infty$. 

To ease the notations, we will occasionally denote these classes by $\Em$, $\Eone$. 

The Hessian energy of $u \in \SH_m(X,\omega)\cap L^{\infty}(X)$ is defined by: 
$$
E_m(u) := \frac{1}{m+1} \sum_{k=0}^m \int_X u H_k(u). 
$$
When $(\omega,m)$ is fixed we will simply denote this functional by $E$. 

The following result is well-known in the Monge-Amp\`ere case and the proof can be adapted in an obvious way to the Hessian setting, see \cite{LN15}. 
\begin{prop}\label{prop: basic fact on energy} Suppose $u,v \in \SHm\cap L^{\infty}(X)$. The following hold:\\
\noindent (i) $ E(u)-E(v) = \frac{1}{m+1}\sum_{k=0}^n \int_X (u-v) \omega_{u}^k \wedge \omega_{v}^{m-k} \wedge \omega^{n-m}$.\\
\noindent (ii) $E$ is non-decreasing and concave along affine curves. Additionally, the following estimates hold: $
	\int_X (u-v) H_m(u)  \leq E(u) -E(v) \leq \int_X (u-v)H_m(v).$\\
\noindent (iii) If $v\leq u$ then, $
\frac{1}{m+1} \int_X (u-v) H_m(v) \leq  E(u) - E(v) \leq  \int_X (u-v) H_m(v). $ In particular, $E(v) \leq E(u)$. 
\end{prop}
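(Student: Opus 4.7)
My plan is to follow the standard Monge-Amp\`ere proof template, adapted to the Hessian setting; the only nontrivial ingredient is integration by parts, which in this context is provided by the Bedford--Taylor-type theory for bounded $\omega$-$m$-sh functions developed in the references cited earlier in the excerpt.

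For (i), I would first differentiate $E$ along the affine segment $u_t := (1-t)v + tu$. Since $\tfrac{d}{dt}u_t = u-v$ and $\tfrac{d}{dt}\omega_{u_t}^k = k\, dd^c(u-v) \wedge \omega_{u_t}^{k-1}$, an integration by parts on the cross terms (using $dd^c u_t = \omega_{u_t} - \omega$) produces a telescoping sum that collapses to
\[
\frac{d}{dt} E(u_t) = \int_X (u-v)\, H_m(u_t).
\]
Expanding $H_m(u_t) = ((1-t)\omega_v + t\omega_u)^m \wedge \omega^{n-m}$ by the binomial theorem and using the Beta integral $\int_0^1 \binom{m}{k}(1-t)^{m-k} t^k\, dt = \tfrac{1}{m+1}$, the formula in (i) follows by integrating $\int_0^1 \tfrac{d}{dt}E(u_t)\, dt$.

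For (ii), the key observation is that the sequence
\[
a_k := \int_X (u-v)\, \omega_u^k \wedge \omega_v^{m-k} \wedge \omega^{n-m}, \qquad k = 0, 1, \dots, m,
\]
is non-increasing in $k$. Indeed $a_k - a_{k+1} = \int_X (u-v)\, dd^c(v-u) \wedge S_k$ with $S_k := \omega_u^k \wedge \omega_v^{m-k-1} \wedge \omega^{n-m}$ a positive current, and integration by parts rewrites this as $\int_X d(u-v)\wedge d^c(u-v) \wedge S_k \geq 0$. Since $a_m = \int_X(u-v)H_m(u)$ and $a_0 = \int_X(u-v)H_m(v)$, averaging in the formula from (i) yields the two estimates in (ii). Concavity along affine curves is obtained by differentiating once more:
\[
\frac{d^2}{dt^2} E(u_t) = m \int_X (u-v)\, dd^c(u-v) \wedge \omega_{u_t}^{m-1} \wedge \omega^{n-m} = -m \int_X d(u-v) \wedge d^c(u-v) \wedge \omega_{u_t}^{m-1} \wedge \omega^{n-m} \leq 0.
\]
The monotonicity statement ``$E$ is non-decreasing'' with respect to the pointwise order is the last assertion of (iii), proved below.

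For (iii), assume $v \leq u$. The upper bound is an instance of (ii). For the lower bound I apply the formula in (i) directly: each summand is nonnegative, since $u - v \geq 0$ and each $\omega_u^k \wedge \omega_v^{m-k} \wedge \omega^{n-m}$ is a positive Borel measure; discarding all terms except the one at $k = 0$ gives
\[
E(u) - E(v) \geq \frac{1}{m+1} \int_X (u-v)\, H_m(v) \geq 0,
\]
which yields both the stated lower bound and $E(v) \leq E(u)$. The only delicate point throughout is that $u, v$ are bounded but not smooth, so all integration-by-parts identities have to be interpreted in the sense of Bedford--Taylor for bounded $\omega$-$m$-sh functions; this is routine in the cited framework, where all wedge products appearing above are well-defined positive currents of the appropriate bidegree, making the argument formally identical to its Monge--Amp\`ere prototype.
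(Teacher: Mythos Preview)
Your argument is correct and is precisely the standard Monge--Amp\`ere proof transported to the Hessian setting; the paper does not supply its own proof but simply states that the result ``is well-known in the Monge--Amp\`ere case and the proof can be adapted in an obvious way to the Hessian setting,'' citing \cite{LN15}. Your write-up is thus exactly what the paper is pointing to (note the obvious typo in the statement: the sum in (i) should run to $m$, not $n$, as your computation confirms).
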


One can thus extend $E$ to $\SH_m(X,\omega)$ by 
$$
E(u) := \inf \{E(v) \setdef v\in \SH_m(X,\omega)\cap L^{\infty}, \ v \geq u\}. 
$$
A function $u\in \SH_m(X,\omega)$ belongs to $\Eone$ iff $E(u)>-\infty$. 

Following \cite{Dar17AJM,Dar15} we introduce the functional $I_1$
$$
I_1(u,v) := \int_X |u-v| \left ( H_m(u)+H_m(v) \right ). 
$$
\begin{prop}\label{prop: continuity of I1}
	Assume that $u_j \in \Eone$ is a  monotone sequence converging to $u \in \Eone$. Then $I_1(u_j,u) \to 0$ and $E(u_j) \to E(u)$.  
\end{prop}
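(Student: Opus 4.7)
The plan is to reduce both conclusions to a single claim: $E(u_j)\to E(u)$. Indeed, once this is known, Proposition~\ref{prop: basic fact on energy}(ii) and (iii), applied to whichever of $u_j,u$ majorizes the other, combine into the uniform comparison
\[
\int_X |u_j-u|\bigl(H_m(u_j)+H_m(u)\bigr)\,\leq\,(m+2)\,|E(u_j)-E(u)|,
\]
so $I_1(u_j,u)\to 0$ follows automatically.

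For the \textbf{decreasing case} $u_j\searrow u$, monotonicity of $E$ on $\SH_m$ (immediate from the infimum definition over bounded majorants) gives $E(u_j)\searrow L\geq E(u)$. To show $L\leq E(u)$, I would fix $\varepsilon>0$, pick a bounded $w\in\SH_m(X,\omega)$ with $w\geq u$ and $E(w)\leq E(u)+\varepsilon$ (available by the infimum definition of $E(u)$), and note that $\max(u_j,w)$ is a uniformly bounded decreasing sequence with limit $w$. For bounded decreasing sequences the continuity of $E$ is immediate from Proposition~\ref{prop: basic fact on energy}(ii) by dominated convergence, giving $E(\max(u_j,w))\to E(w)$. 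Since $E$ is monotone and $\max(u_j,w)\geq u_j$, this delivers $L\leq E(w)\leq E(u)+\varepsilon$; letting $\varepsilon\to 0$ closes this case.

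For the \textbf{increasing case} $u_j\nearrow u$ a.e., we have $E(u_j)\leq E(u)$, and the task is $\liminf_j E(u_j)\geq E(u)$. My plan is a truncation argument. Set $u^k:=\max(u,-k)$, $u_j^k:=\max(u_j,-k)$. For fixed $k$, $u_j^k\nearrow u^k$ is a uniformly bounded monotone sequence, which converges to $u^k$ in $m$-capacity by the standard quasi-continuity of $\omega$-$m$-sh functions. Applying Theorem~\ref{thm: convergence quasi set} termwise to the decomposition $E(v)=\frac{1}{m+1}\sum_{p=0}^m\int_X v\,H_p(v)$ yields $E(u_j^k)\to E(u^k)$ as $j\to\infty$ for each fixed $k$. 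The decreasing case applied to $u^k\searrow u$ produces $E(u^k)\to E(u)$ as $k\to\infty$. The bridging estimate is Proposition~\ref{prop: basic fact on energy}(ii) applied to $(u_j^k,u_j)$ with $u_j^k\geq u_j$:
\[
0\,\leq\, E(u_j^k)-E(u_j)\,\leq\,\int_X (u_j^k-u_j)\,H_m(u_j)\,=\,\int_{\{u_j<-k\}}(-k-u_j)\,H_m(u_j),
\]
which I would show tends to $0$ uniformly in $j$ as $k\to\infty$, using the uniform energy bound $E(u_1)\leq E(u_j)\leq E(u)$ and a Chebyshev-type control of $|u_j|$ against $H_m(u_j)$ derived from part (iii). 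A diagonal argument then concludes $E(u_j)\to E(u)$.

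\textbf{Main obstacle}: the uniform-in-$j$ tail estimate $\sup_j \int_{\{u_j<-k\}}|u_j|\,H_m(u_j)\to 0$ as $k\to\infty$ in the increasing case. The pointwise-in-$j$ version is a direct consequence of $u_j\in\Eone$ together with Proposition~\ref{prop: basic fact on energy}(iii), but promoting it to uniformity in $j$ is the technical heart of the argument, and relies essentially on the standing assumption that all $u_j$ sit in $\Eone$ with uniformly controlled finite Hessian energies, combined with the monotone structure of the sequence.
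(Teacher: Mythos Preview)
Your proposal is essentially the standard argument to which the paper defers (the paper gives no proof of its own, only citing \cite{BEGZ10} and \cite[Proposition~2.7]{DDL3}). The reduction to $E(u_j)\to E(u)$ via Proposition~\ref{prop: basic fact on energy}(ii),(iii) is correct, with the understood extension of those estimates from bounded potentials to $\Eone$ by canonical approximation. The decreasing case is clean and complete.

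Two remarks. First, a minor point: invoking Theorem~\ref{thm: convergence quasi set} for the bounded step $E(u_j^k)\to E(u^k)$ is not quite the right reference---that theorem concerns $\int_E H_m(\cdot)$ over quasi-open or quasi-closed $E$, not $\int_X v\,H_p(v)$. What you actually need is continuity of $v\mapsto\int_X\chi\,H_p(v)$ for bounded quasi-continuous $\chi$ along uniformly bounded sequences converging in capacity, which is \cite[Proposition~3.12]{Lu13} (used elsewhere in the paper).

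Second, your self-identified obstacle is indeed the crux, and you have correctly located it. In the standard references it is resolved as follows: since $u_1\leq u_j\leq 0$, one has $\{u_j<-k\}\subset\{u_1<-k\}$ and $u_j^k-u_j\leq u_1^k-u_1$, so
\[
0\leq E(u_j^k)-E(u_j)\leq \int_X(u_1^k-u_1)\,H_m(u_j).
\]
The ``fundamental inequality'' (obtained by iterated integration by parts, replacing one $\omega_{u_j}$ at a time by $\omega_{u_1}$ and using $-u_j\leq -u_1$) then bounds the mixed integrals $\int_X(-u_1)\,\omega_{u_j}^p\wedge\omega_{u_1}^{m-p}\wedge\omega^{n-m}$ uniformly by quantities depending only on $E(u_1)$; the same manipulation applied to $u_1^k-u_1$ in place of $-u_1$ gives the uniform tail decay. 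This is exactly the content of the cited passages in \cite{BEGZ10}, so your outline matches what the paper intends.
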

\begin{proof}
	The proof is an obvious modification of the Monge-Amp\`ere case, see e.g.  \cite{BEGZ10}, \cite[Proposition 2.7]{DDL3}. 
\end{proof}

\section{Relative Potential Theory} \label{sect: Relative PP}

\subsection{Monotonicity of the complex Hessian mass}\label{sect: monotonicity}
In this section we extend the monotonicity results of \cite{WN19}, \cite{DDL2} to the Hessian cases $m<n$. The proof is new in the Monge-Amp\`ere case. 

Recall that we normalize $\omega$ such that $\int_X \omega^n=1$.  We first establish the following slope formula:

\begin{lemma}
\label{lem: slope formula}
For any $u\in \SH_m(X,\omega)$ we have 
$$
\lim_{s\to +\infty} \frac{E(\max(u,-s))}{s} = -1 + \frac{1}{m+1} \sum_{k=0}^m \int_X H_k(u). 
$$
\end{lemma}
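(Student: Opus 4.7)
The plan is to expand $E(u_s)$ for $u_s := \max(u,-s)$ and extract the leading $-s$ behaviour. Writing $u_s = -s + (u_s+s)$ with $(u_s+s) = \max(u+s,0) \geq 0$ vanishing on $\{u \leq -s\}$, and using that $\int_X H_k(u_s) = \int_X \omega^n = 1$ for each $k=0,\ldots,m$ (Stokes, since $u_s$ is bounded and $(\omega+dd^c u_s)^k \wedge \omega^{n-k}$ reduces to $\omega^n$ after expansion and integration by parts), one immediately gets
$$E(u_s) = -s + \frac{1}{m+1}\sum_{k=0}^m \int_X (u_s+s)\,H_k(u_s).$$
Dividing by $s$, the $-s$ accounts for the $-1$ in the target formula, so the task reduces to computing $\lim_{s\to+\infty} s^{-1}\int_X(u_s+s)H_k(u_s)$ for each $k$.

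Next, I would transfer the integrals from $H_k(u_s)$ to $H_k(u)$ via plurifine locality. Writing $H_k(u) = H_m(u,\ldots,u,0,\ldots,0)$ (with $u$ repeated $k$ times and $0$ repeated $m-k$ times) and noting that $u_s = u$ on the quasi-open set $\{u>-s\}$, Lemma \ref{lem: Identity principle} yields $\mathbf{1}_{\{u>-s\}} H_k(u_s) = \mathbf{1}_{\{u>-s\}} H_k(u)$; combined with $u_s+s \equiv 0$ off $\{u>-s\}$ this gives
$$\frac{1}{s}\int_X (u_s+s)H_k(u_s) = \int_{\{u>-s\}} H_k(u) + \frac{1}{s}\int_{\{u>-s\}} u\,H_k(u).$$
The first summand converges to $\int_X H_k(u)$ as $s\to+\infty$ since $H_k(u)$ puts no mass on the $m$-polar set $\{u=-\infty\}$.

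The hard part is showing $s^{-1}\int_{\{u>-s\}} u\,H_k(u) \to 0$, which is delicate because $u$ need not be $H_k(u)$-integrable. I would handle it by a two-scale cutoff: for a fixed large $A>0$,
$$\Bigl|\int_{\{u>-s\}} u\,H_k(u)\Bigr| \leq C_A + s\cdot H_k(u)(\{u \leq -A\}),$$
where $C_A$ controls the integral over $\{u>-A\}$ (on which $u$ is trapped between $-A$ and $\sup_X u$) against the finite measure $H_k(u)$, and the second term comes from the pointwise bound $|u|\leq s$ on $\{-s<u\leq -A\}$. Dividing by $s$ and letting first $s\to+\infty$ and then $A\to+\infty$, the remainder converges to $H_k(u)(\{u=-\infty\}) = 0$, closing the argument and yielding the slope formula.
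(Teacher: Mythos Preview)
Your proof is correct and essentially the same as the paper's: both rely on plurifine locality to replace $H_k(u_s)$ by $H_k(u)$ on $\{u>-s\}$ and then on a dominated-convergence/cutoff argument (exploiting that $|u|/s\leq 1$ on $\{u>-s\}$ and that $H_k(u)$ does not charge $\{u=-\infty\}$) to kill the $s^{-1}\int_{\{u>-s\}} u\,H_k(u)$ term. The only cosmetic difference is that you extract the $-1$ by shifting $u_s=-s+(u_s+s)$ at the outset, whereas the paper obtains it from $\int_{\{u\leq -s\}}H_k(u_s)=1-\int_{\{u>-s\}}H_k(u)$; the two decompositions are algebraically equivalent.
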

\begin{proof}
We set $u^s := \max(u,-s)$ and compute 
\begin{eqnarray*}
\frac{(m+1)E(u^s)}{s} =  \sum_{k=0}^m \int_{\{u>-s\}} \frac{u}{s}H_k(u^s)  -  \sum_{k=0}^m \int_{\{u\leq -s\}}  H_k(u^s). 
\end{eqnarray*}
We note that, by the Lemma \ref{lem: Identity principle}, ${\bf 1}_{\{u>-s\}}H_k(u^s) = {\bf 1}_{\{u>-s\}} H_k(u)$. Thus we can continue the above computation to write 
$$
\frac{(m+1)E(u^s)}{s} =  \sum_{k=0}^m \int_{\{u>-s\}} \frac{u}{s}H_k(u)  -  \sum_{k=0}^m \int_{\{u\leq -s\}}  H_k(u^s). 
$$ 
The functions ${\bf 1}_{\{u>-s\}} \frac{u}{s}$ are uniformly bounded and converge to $0$ outside the $m$-polar set $\{u=-\infty\}$.  Since $H_k(u)$ does not charge $m$-polar sets, we see that 
$$
\lim_{s\to +\infty}\sum_{k=0}^m \int_{\{u>-s\}} \frac{u}{s}H_k(u)  =0. 
$$
On the other hand, by Lemma \ref{lem: Identity principle} again we have 
\begin{eqnarray*}
1= \int_{X} H_k(u^s) &=&  \int_{\{u>-s\}} H_k(u^s) + \int_{\{u\leq -s\}} H_k(u^s)
\\ &=& \int_{\{u>-s\}}  H_k(u) + \int_{\{u\leq -s\}} H_k(u^s).  
\end{eqnarray*}
Letting $s\to +\infty$ we obtain the result. 
\end{proof}
\begin{prop}
	\label{prop: mass equality}
	Let $u,v \in \SH_m(X,\omega)$, and ssume that there exists a constant $C\in \mathbb{R}$ such that $v-C \leq u\leq v+C$ on $X$. Then 
	$$
	\int_X H_k(u) = \int_X H_k(v), \ \forall k\in \{0,...,m\}. 
	$$
\end{prop}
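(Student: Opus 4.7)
The plan is to extract the conclusion from the slope formula (Lemma \ref{lem: slope formula}) by induction on $k$. Write $u^s := \max(u,-s)$ and $v^s := \max(v,-s)$. The hypothesis $v-C \leq u \leq v+C$ survives truncation: from $u \geq v-C$ one gets $u^s = \max(u,-s) \geq \max(v-C,-s) \geq v^s - C$, and symmetrically $u^s \leq v^s + C$, so $|u^s - v^s| \leq C$ on $X$ for every $s > 0$.

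Fix $k \in \{0,1,\ldots,m\}$. Since an $\omega$-$m$-sh function is automatically $\omega$-$k$-sh, both $u$ and $v$ lie in $\SH_k(X,\omega)$, and Lemma \ref{lem: slope formula} applied at level $k$ (its proof does not use anything specific to the top level) gives
$$
\lim_{s\to+\infty} \frac{E_k(u^s)}{s} = -1 + \frac{1}{k+1}\sum_{j=0}^k \int_X H_j(u),
$$
and the analogous formula for $v$, where $E_k(\varphi) := \frac{1}{k+1}\sum_{j=0}^k \int_X \varphi H_j(\varphi)$. To compare the two limits, apply Proposition \ref{prop: basic fact on energy}(i) at level $k$:
$$
E_k(u^s) - E_k(v^s) = \frac{1}{k+1}\sum_{j=0}^k \int_X (u^s - v^s)\,(\omega+dd^cu^s)^j \wedge (\omega+dd^cv^s)^{k-j} \wedge \omega^{n-k}.
$$
Each mixed form on the right is a positive measure of total mass $\int_X \omega^n = 1$ by Stokes' theorem (the potentials are bounded), so together with $|u^s-v^s|\leq C$ this yields $|E_k(u^s)-E_k(v^s)| \leq C$ uniformly in $s$. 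Dividing by $s$ and passing to the limit forces
$$
\sum_{j=0}^k \int_X H_j(u) = \sum_{j=0}^k \int_X H_j(v).
$$

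The conclusion follows by induction on $k$. The case $k=0$ is trivial since $\int_X H_0(u) = \int_X \omega^n = \int_X H_0(v)$. Assuming $\int_X H_j(u) = \int_X H_j(v)$ for $j<k$, subtracting the identity at level $k-1$ from the one at level $k$ yields $\int_X H_k(u) = \int_X H_k(v)$, closing the induction. There is no serious obstacle: the only point to verify is that the energy machinery (Proposition \ref{prop: basic fact on energy} and Lemma \ref{lem: slope formula}) is available at each intermediate level, but their proofs rely only on general pluripotential-theoretic facts about the Hessian operator and apply verbatim to each $k \in \{0,\ldots,m\}$.
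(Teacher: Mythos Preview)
Your proof is correct and follows essentially the same route as the paper: both derive, for each level $l\in\{1,\dots,m\}$, the identity $\sum_{j=0}^l \int_X H_j(u)=\sum_{j=0}^l \int_X H_j(v)$ from the slope formula (Lemma~\ref{lem: slope formula}) together with the uniform bound $|E_l(u^s)-E_l(v^s)|\leq C$. The only cosmetic difference is that the paper obtains this bound from the monotonicity of $E_l$ (using $E_l(w\pm C)=E_l(w)\pm C$), whereas you invoke the explicit cocycle formula in Proposition~\ref{prop: basic fact on energy}(i); these are equivalent justifications.
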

\begin{proof}
Fix $1\leq l\leq m$, and observe that $\SH_m(X,\omega) \subset \SH_l(X,\omega)$.  For each $s>0$ set $u^s := \max(u,-s)$.  By assumption we have 
$$
v^s -C \leq u^s \leq v^s +C. 
$$
Hence, the monotonicity of the  energy $E_l$ \cite[Lemma 6.3]{LN15} gives, for all $s>0$,
$$
\frac{E_l(v^s) -C}{s} \leq \frac{E_l(u^s)}{s} \leq \frac{E_l(v^s) + C}{s}. 
$$
Letting $s\to +\infty$ and using Lemma \ref{lem: slope formula} we obtain the following equalities for $l=1,...,m$, which imply the result: 
$$
\sum_{k=0}^l \int_X H_k(u) = \sum_{k=0}^l  \int_X H_k(v). 
$$
\end{proof}

\begin{theorem}\label{thm: lsc of Hes measure}
	Assume that $u_1^j,...,u_m^j$ are sequences of $\omega$-$m$-sh functions converging in $m$-capacity to  $\omega$-$m$-sh functions $u_1,...,u_m$. Let $\chi_j$ be a sequence of positive uniformly bounded quasi-continuous functions which  converges in capacity to $\chi$.  Then, 
	$$
	\liminf_{j\to +\infty}\int_X \chi_j H_m(u_1^j,...,u_m^j) \geq \int_X \chi H_m(u_1,...,u_m). 
	$$
	In particular, if $\Omega\subset X$ is a quasi-open set then 
	$$
	\liminf_{j\to +\infty} \int_{\Omega} H_m(u_1^j,...,u_m^j) \geq \int_{\Omega} H_m(u_1,...,u_m). 
	$$
\end{theorem}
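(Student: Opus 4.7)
The strategy is to reduce to the uniformly bounded case covered by Theorem \ref{thm: convergence quasi set} via truncation at level $-s$, control all errors with Chern--Levine--Nirenberg (CLN) type mass bounds, and then send $s\to\infty$ using that $H_m(u_1,\ldots,u_m)$ does not charge $m$-polar sets. Fix $s>0$ and $\eta\in(0,s)$, and write $v_p^s:=\max(u_p,-s)$, $v_p^{j,s}:=\max(u_p^j,-s)$, $U^{j,s}:=\cap_p\{u_p^j>-s\}$, and $V^{s,\eta}:=\cap_p\{u_p>-s+\eta\}$. The $v_p^{j,s}$ are uniformly bounded in $j$ and converge in $m$-capacity to $v_p^s$, while $V^{s,\eta}$ is quasi-open.

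By plurifine locality (Lemma \ref{lem: Identity principle}) and positivity,
$$
\int_X \chi_j\,H_m(u_1^j,\ldots,u_m^j) \;\geq\; \int_{U^{j,s}}\chi_j\,H_m(v_1^{j,s},\ldots,v_m^{j,s}).
$$
Capacity convergence $u_p^j\to u_p$ implies $\capa_m(E_j)\to 0$ for $E_j:=\bigcup_p\{|u_p^j-u_p|>\eta/2\}$, and on $V^{s,\eta}\setminus E_j$ we have $u_p^j>-s+\eta/2$, so $V^{s,\eta}\setminus E_j\subset U^{j,s}$. The CLN estimate (valid for uniformly bounded $\omega$-$m$-sh functions) gives $H_m(v^{j,s})(E_j)\leq C(s)\capa_m(E_j)\to 0$; a parallel splitting argument based on $\chi_j\to\chi$ in capacity, together with the mass bound $\int H_m(v^{j,s})=1$, absorbs the variable cutoff. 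Hence
$$
\liminf_j\int_X\chi_j\,H_m(u_1^j,\ldots,u_m^j) \;\geq\; \liminf_j \int_{V^{s,\eta}}\chi\,H_m(v_1^{j,s},\ldots,v_m^{j,s}).
$$
Theorem \ref{thm: convergence quasi set}, extended from indicators of quasi-open sets to bounded positive quasi-continuous $\chi$ by a Choquet-type approximation from below by step functions on quasi-open sets, then gives
$$
\liminf_j \int_{V^{s,\eta}}\chi\,H_m(v^{j,s})\;\geq\;\int_{V^{s,\eta}}\chi\,H_m(v^s)\;=\;\int_{V^{s,\eta}}\chi\,H_m(u_1,\ldots,u_m),
$$
the last equality by plurifine locality since $u_p=v_p^s$ on $V^{s,\eta}$.

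As $s\to\infty$ (with $\eta$ fixed) the quasi-open sets $V^{s,\eta}$ increase to $X\setminus\bigcup_p\{u_p=-\infty\}$; the excluded set is $m$-polar and $H_m(u_1,\ldots,u_m)$ does not charge it, so monotone convergence yields $\int_{V^{s,\eta}}\chi\,H_m(u)\to\int_X\chi\,H_m(u)$, proving the first assertion. The second (quasi-open) statement is obtained by specializing to $\chi_j=\chi=\mathbf{1}_\Omega$ and approximating $\mathbf{1}_\Omega$ from below by quasi-continuous cutoffs. The main obstacle is that the CLN constant $C(s)$ blows up as $s\to\infty$, which dictates the order of operations: truncate first to a fixed level $s$, execute the bounded lsc and the Choquet-type approximation with $s$-dependent constants, and only then send $s\to\infty$. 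The secondary technical point --- extending Theorem \ref{thm: convergence quasi set} from indicators of quasi-open sets to bounded quasi-continuous cutoffs and to variable $\chi_j\to\chi$ in capacity --- uses quasi-continuity to approximate by step functions on quasi-open sets with controlled capacity error, combined again with the CLN mass bound.
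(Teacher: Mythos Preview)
Your proof is correct and follows the same overall architecture as the paper's: truncate the potentials at level $-s$, invoke lower semicontinuity in the uniformly bounded case, then let $s\to\infty$ using that the non-$m$-polar Hessian measure ignores the polar locus. The difference lies in how the ``moving'' truncation set $U^{j,s}=\bigcap_p\{u_p^j>-s\}$ is handled. You pass to a \emph{fixed} quasi-open set $V^{s,\eta}=\bigcap_p\{u_p>-s+\eta\}$ via the inclusion $V^{s,\eta}\setminus E_j\subset U^{j,s}$ and control the error set $E_j$ using a CLN-type domination $H_m(v^{j,s})(E)\le C(s)\,\capa_m(E)$; this is legitimate but brings in the $s$-dependent constant you rightly flag. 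The paper instead introduces the continuous cutoff
\[
f_{C,\varepsilon}^{\,j}:=\prod_{p=1}^m\frac{\max(u_p^j+C,0)}{\max(u_p^j+C,0)+\varepsilon},
\]
which is quasi-continuous, takes values in $[0,1]$, vanishes outside $U^{j,C}$, and --- crucially --- converges in capacity to its limit $f_{C,\varepsilon}$ because it is a continuous function of the $u_p^j$. This lets the paper apply the bounded-case convergence result directly to $\chi_j f_{C,\varepsilon}^{\,j}$ without any set-theoretic splitting or CLN error estimate, and then send $\varepsilon\to 0$, $C\to\infty$. Your route works but is heavier; the paper's cutoff trick is worth knowing, as it replaces a capacity/CLN argument by a one-line continuity observation.
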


\begin{proof}
	We borrow the ideas of \cite{DDL2}. Fix $C>0$, $\varepsilon>0$, and set 
	$$
	U_C^j := \cap_{p=1}^m \{u_p^j >-C\},  \; f_{C,\varepsilon}^j :=  \prod_{p=1}^m\frac{\max(u_p^j+C,0)}{\max(u_p^j+C,0)+\varepsilon}. 
	$$
	Observe that $0\leq f_{C,\varepsilon}^j\leq 1$ and  $f_{C,\varepsilon}^j$ vanishes outside $U_C^j$. We thus have 
	\begin{flalign*}
		\liminf_{j\to +\infty}& \int_X  \chi_j H_m(u_1^j,...,u_m^j)  \geq  \liminf_{j\to +\infty}\int_{U_C^j} \chi_j H_m(u_1^j,...,u_m^j)  \\
		&=  \liminf_{j\to +\infty}\int_{U_C^j} \chi_j H_m(\max(u_1^j,-C),...,\max(u_m^j,-C))  \\
		&\geq  \liminf_{j\to +\infty}\int_X \chi_j f_{C,\varepsilon}^j H_m(\max(u_1^j,-C),...,\max(u_m^j,-C)),
	\end{flalign*}
	where in the second line we have used the plurifine locality. For fixed $C>0$ the functions $\max(u_p^j,-C)$ are uniformly bounded, hence we can use \cite[Proposition 3.12]{Lu13}, which is a direct adaptation of the case $m=n$,  to continue the above inequality in the following way
	\begin{flalign*}
		\liminf_{j\to +\infty} \int_X \chi_j H_m(u_1^j,...,u_m^j) 
		& \geq \int_X \chi f_{C,\varepsilon} H_m(\max(u_1-C),...,\max(u_m-C))\\
		&\geq \int_{U_C} \chi f_{C,\varepsilon} H_m(\max(u_1-C),...,\max(u_m-C))\\
		&\geq \int_{U_C} \chi f_{C,\varepsilon} H_m(u_1,...,u_m).
	\end{flalign*}
	In the last line above we have used Lemma \ref{lem: Identity principle}. We now let $\varepsilon\to 0$ and then $C\to +\infty$ to conclude the proof of the first statement. 
	
	To prove the last statement we follow the lines above with $\chi_j = 1$, $X$ replaced by $\Omega$, and we use Theorem \ref{thm: convergence quasi set}. 
\end{proof}

As shown in Theorem \ref{thm: lsc of Hes measure}, the (non-$m$-polar) Hessian measure is lower semicontinuous along sequences converging in $m$-capacity.   We give below  sufficient conditions for the convergence.

\begin{coro}
\label{coro: increasing convergence}
Assume that $u_1^j,...,u_m^j$ are  sequences of $\omega$-$m$-sh functions which increase a.e. to $\omega$-$m$-sh functions $u_1,...,u_m$. Then 
$$
H_m(u_1^j,...,u_m^j) \to H_m(u_1,...,u_m)
$$
weakly in the sense of measures. 
\end{coro}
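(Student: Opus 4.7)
The strategy is to combine Theorem~\ref{thm: lsc of Hes measure} with a total-mass convergence argument obtained from the slope formula.

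Since $u_p^j\nearrow u_p$ a.e., the sequence converges in $m$-capacity, which is a standard consequence of the quasi-continuity of $\omega$-$m$-sh functions. Applying Theorem~\ref{thm: lsc of Hes measure} with the constant test sequence $\chi_j\equiv\chi$ for any nonnegative continuous $\chi$ yields
\[
\liminf_{j\to\infty}\int_X \chi\,H_m(u_1^j,\ldots,u_m^j)\geq\int_X \chi\,H_m(u_1,\ldots,u_m).
\]
Applying the same inequality to $\chi_j\equiv C-\chi$ with $C:=\sup_X\chi$ and subtracting reduces the weak convergence to the equality of total masses
\[
\lim_{j\to\infty}\int_X H_m(u_1^j,\ldots,u_m^j)=\int_X H_m(u_1,\ldots,u_m).
\]

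Next I would polarize via the multilinearity formula (Lemma~\ref{lem: multilinearity}): for convex weights $t_p\geq 0$ summing to one, the quantity $\int_X H_m(\sum_p t_p u_p^j)$ is a homogeneous polynomial of degree $m$ in $(t_1,\ldots,t_m)$, whose coefficient of $t_1\cdots t_m$ equals $m!\int_X H_m(u_1^j,\ldots,u_m^j)$ by the symmetry of $H_m$ in its arguments. Since pointwise convergence of polynomials of bounded degree implies convergence of coefficients, mass convergence for the mixed problem reduces to the single-function statement $\int_X H_m(v^j)\to \int_X H_m(v)$ along any monotone sequence $v^j\nearrow v$ of $\omega$-$m$-sh functions (take $v^j:=\sum_p t_p u_p^j$).

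For such a single sequence, the slope formula (Lemma~\ref{lem: slope formula}) combined with the pointwise inequality $(v^j)^s\leq v^s$ and the monotonicity of $E$ (Proposition~\ref{prop: basic fact on energy}) yields the upper bound
\[
\sum_{k=0}^{m}\int_X H_k(v^j)\leq \sum_{k=0}^{m}\int_X H_k(v).
\]
On the other hand, applying Theorem~\ref{thm: lsc of Hes measure} to the tuple consisting of $k$ copies of $v^j$ together with $m-k$ copies of $0$, and using the identity $H_m(v,\ldots,v,0,\ldots,0)=(\omega+dd^c v)^k\wedge\omega^{n-k}=H_k(v)$, one obtains the lower bounds $\liminf_j\int_X H_k(v^j)\geq \int_X H_k(v)$ for each $k=0,1,\ldots,m$. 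Summing these lower bounds and comparing with the upper bound forces $\int_X H_k(v^j)\to \int_X H_k(v)$ for every $k$, in particular for $k=m$, completing the mass-convergence step.

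The delicate point is the polarization reduction: one must verify that individual mixed masses can be recovered from the polynomial identity despite having only convex weights at our disposal, which is made possible by the symmetry of $H_m$ in its arguments.
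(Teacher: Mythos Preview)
Your argument is correct. The paper's one-line proof simply invokes Theorem~\ref{thm: monotonicity} (the full monotonicity of mixed Hessian masses) together with Theorem~\ref{thm: lsc of Hes measure}: monotonicity gives the upper bound $\int_X H_m(u_1^j,\ldots,u_m^j)\leq \int_X H_m(u_1,\ldots,u_m)$, and lower semicontinuity gives the matching lower bound, hence weak convergence.

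Your route is a genuine, if modest, variation. Rather than appealing to Theorem~\ref{thm: monotonicity} (which in the paper is a forward reference), you extract only what is needed for this corollary: after polarization, the slope formula yields the \emph{aggregate} upper bound $\sum_{k=0}^m\int_X H_k(v^j)\leq\sum_{k=0}^m\int_X H_k(v)$, while Theorem~\ref{thm: lsc of Hes measure} (applied with $m-k$ copies of $0$) supplies the termwise lower bounds $\liminf_j\int_X H_k(v^j)\geq\int_X H_k(v)$; together these force termwise convergence. The underlying ingredients (slope formula, multilinearity/polarization, lower semicontinuity) are identical to those behind Theorem~\ref{thm: monotonicity}, but your organization is self-contained and avoids proving the stronger pointwise-in-$j$ mass inequality. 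What you lose is that you do not obtain monotonicity of $j\mapsto\int_X H_m(u_1^j,\ldots,u_m^j)$, only its convergence; for the corollary this is immaterial. Your remark about recovering mixed masses from convex weights is justified: the expansion in Lemma~\ref{lem: multilinearity} is homogeneous of degree $m$, so its restriction to the simplex determines all coefficients, and convergence at finitely many simplex points suffices.
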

\begin{proof}
It is a direct consequence of Theorem \ref{thm: monotonicity} and Theorem \ref{thm: lsc of Hes measure}.
\end{proof}

\begin{lemma}
\label{lem: GLZ cap}
Let $\mu$ be a positive measure vanishing on $m$-polar sets.  Then there exists a continuous function $f : [0,+\infty) \rightarrow [0,+\infty)$ such that, for all Borel set $E$,  
$$
\mu(E) \leq f(\capa_m(E)).
$$
\end{lemma}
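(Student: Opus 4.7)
The plan is to define the non-decreasing function
$$F(t) := \sup\{\mu(E) : E \text{ Borel}, \capa_m(E) \leq t\},$$
so that the required inequality $\mu(E) \leq F(\capa_m(E))$ holds tautologically. The heart of the matter is then to show $F(t) \to 0^+$ as $t \to 0^+$; once this is in hand, a continuous non-decreasing majorant $f \geq F$ with $f(0)=0$ is produced by standard mollification on $(0,\infty)$ (for instance by taking $f$ piecewise affine between suitable dyadic values).

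To show $F(t) \to 0$, I would argue by contradiction: assume there exist $\varepsilon_0 > 0$ and Borel sets $E_j$ with $\capa_m(E_j) \leq 2^{-j}$ and $\mu(E_j) \geq \varepsilon_0$. By outer regularity of $\capa_m$, replace each $E_j$ by a slightly larger open set of comparable capacity. The idea is then to package the $E_j$ into a single $u \in \SHm$ with $u = -\infty$ on $\limsup_j E_j$. Since $\mu$ is finite, reverse Fatou gives $\mu(\limsup_j E_j) \geq \varepsilon_0$, so the $m$-polar set $\{u=-\infty\}$ would carry positive $\mu$-measure, contradicting the hypothesis.

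The construction of $u$ proceeds via the relative $m$-extremal function
$$h_j := \left(\sup\{v \in \SHm : v \leq 0 \text{ on } X,\ v \leq -1 \text{ on } E_j\}\right)^{\!*},$$
which is $\omega$-$m$-sh with $-1 \leq h_j \leq 0$ and $h_j = -1$ quasi-everywhere on $E_j$. A quantitative comparison (the Hessian analog of the Alexander--Taylor inequality) turns the capacity bound $\capa_m(E_j) \leq 2^{-j}$ into $\omega$-$m$-sh functions $\phi_j$ with $\sup_X \phi_j = 0$, $\phi_j \leq -c_j$ on $E_j$ for a sequence $c_j \to +\infty$, and uniformly bounded $L^1(X,\omega^n)$-norms (the latter using the uniform integrability of elements of $\SHm$ normalized by $\sup_X = 0$, valid via the Dinew--Ko\l odziej estimate). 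Choosing $c_j$ to grow fast enough (e.g.\ $c_j = 2^{j/2} j$), the series $u := \sum_j 2^{-j/2} \phi_j$ converges in $L^1(X,\omega^n)$ to a function in $\SHm$, and the $j$-th term forces $u = -\infty$ on $\limsup_j E_j$ as required.

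The main obstacle is the quantitative step: producing $\phi_j$ with prescribed depth $-c_j$ on $E_j$ under uniform $L^1$-control, with $c_j$ growing as $\capa_m(E_j) \to 0$. In the Monge--Amp\`ere case $(m=n)$ this is classical (see \cite{GZbook}); in the Hessian setting it combines the relative extremal function machinery of \cite{Lu13,LN15} with the $L^\infty$ estimate of \cite{DK14}, which together yield the required extremal-function--capacity comparison. The packaging and Borel--Cantelli-type arguments after that are straightforward.
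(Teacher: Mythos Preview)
Your approach is correct in outline but takes a genuinely different route from the paper, and the step you flag as ``the main obstacle'' is heavier than what is actually needed.

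The paper's proof is direct and constructive. It invokes \cite[Theorem 1.3]{LN15} to write $\mu = C\,H_m(\psi)$ for some $\psi \in \Em$ with $\sup_X\psi = 0$, then splits
\[
\mu(E) \leq \mu(E\cap\{\psi>-t\}) + \mu(\psi\leq -t).
\]
The first term is bounded by $Ct^m\capa_m(E)$, since $t^{-1}\max(\psi,-t)$ is a competitor in the definition of $\capa_m$; the second is bounded by $C_1/|\chi(-t)|$ for a convex increasing weight $\chi$ chosen so that $\int_X|\chi(\psi)|\,d\mu<\infty$. Taking $t^{m+1}=\max(\capa_m(E)^{-1},1)$ yields an explicit continuous $f$ with $f(0)=0$.

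Your contradiction argument works, but the Alexander--Taylor comparison you invoke is not established in the references you cite for the Hessian setting on compact manifolds, and in any case it is unnecessary. Once you have $\capa_m(E_j)\leq 2^{-j}$, countable subadditivity of $\capa_m$ gives $\capa_m\big(\bigcup_{k\geq j}E_k\big)\leq 2^{-j+1}$, hence $\capa_m(\limsup_j E_j)=0$; the standard equivalence ``zero $m$-capacity $\Leftrightarrow$ $m$-polar'' (see \cite{Lu13,LN15}) then makes $\limsup_j E_j$ an $m$-polar set, and reverse Fatou gives the contradiction with $\mu(\limsup_j E_j)\geq \varepsilon_0$. This bypasses the construction of the $\phi_j$ entirely. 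Both arguments tacitly use $\mu(X)<\infty$, which is implicit in the paper's representation $\mu=C\,H_m(\psi)$ and in your use of reverse Fatou. The trade-off: the paper's method produces an explicit $f$, while yours (once streamlined) is more elementary but only yields existence.
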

\begin{proof}
The proof is an easy adaptation of \cite{GLZJDG}. We repeat this argument here for the reader's convenience. It follows from \cite[Theorem 1.3]{LN15} that there exists $\psi \in \Em$ such that $\sup_X \psi=0$ and $\mu = CH_m(\psi)$, for some positive constant $C$.  

Let $E\subset X$ be a Borel set such that $\capa_m(E)>0$. For $t>1$ we have 
\begin{flalign*}
\mu(E\cap \{\psi >-t\}) &=  C\int_E H_m(\max(\psi,-t)) \leq C t^m \capa_m(E). 
\end{flalign*}
Let $\chi : (-\infty,0) \rightarrow (-\infty,0)$ be a convex increasing function such that $\chi(-\infty)= -\infty$ and $C_1:= \int_X |\chi(\psi)| d\mu <+\infty$. For $t>1$ we have 
$$
\mu(\psi\leq -t) \leq \frac{1}{|\chi(-t)|}\int_X |\chi(\psi)| d\mu = \frac{C_1}{|\chi(-t)|}. 
$$
Choosing $t$ such that $t^{m+1}= \max(\capa_m(E)^{-1}, 1)$, we finish the proof of the lemma. 
\end{proof}
\begin{theorem}
\label{thm: dominated convergence}
Assume that $u_j \in \SH_m(X,\omega)$ decreases to $u\in \SH_m(X,\omega)$. If there exists a non-$m$-polar positive measure $\mu$  such that 
$$
H_m(u_j) \leq \mu, \forall j,
$$
 then $H_m(u_j)$ weakly converges to $H_m(u)$. 
\end{theorem}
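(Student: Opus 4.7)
The plan is to deduce the weak convergence $H_m(u_j)\rightharpoonup H_m(u)$ by first establishing \emph{local} convergence on the open sets $\{u>-t\}$ via Bedford-Taylor for bounded truncations, and then upgrading to global weak convergence with a Urysohn cutoff whose tail is controlled by the non-$m$-polar domination.

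For $t>0$, set $u_j^t:=\max(u_j,-t)$ and $u^t:=\max(u,-t)$; these are uniformly bounded with $u_j^t\downarrow u^t$, so the Bedford-Taylor-type monotone convergence for bounded $\omega$-$m$-sh functions (see \cite{Lu13,LN15}) gives $H_m(u_j^t)\rightharpoonup H_m(u^t)$ as $j\to\infty$. Since $u$ is upper semicontinuous the set $\{u>-t\}$ is open, and $\{u>-t\}\subset\{u_j>-t\}$ because $u_j\ge u$; plurifine locality (Lemma \ref{lem: Identity principle}) then yields $H_m(u_j)=H_m(u_j^t)$ and $H_m(u)=H_m(u^t)$ on $\{u>-t\}$. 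Consequently, for every continuous $\chi$ with compact support in $\{u>-t\}$,
$$\int_X \chi\,H_m(u_j)=\int_X \chi\,H_m(u_j^t)\;\xrightarrow[j\to\infty]{}\;\int_X \chi\,H_m(u^t)=\int_X \chi\,H_m(u). \qquad(\ast)$$

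To promote $(\ast)$ to weak convergence on all of $X$, fix an arbitrary continuous $\chi$ on $X$. By Urysohn's lemma choose continuous $\psi_n:X\to[0,1]$ with compact support in $\{u>-t\}$ and $\psi_n\uparrow \mathbf{1}_{\{u>-t\}}$ pointwise, and write $\chi=\chi\psi_n+\chi(1-\psi_n)$. The first piece has compact support in $\{u>-t\}$, so $(\ast)$ gives $\int \chi\psi_n\,H_m(u_j)\to \int \chi\psi_n\,H_m(u)$ for each fixed $n$. The second piece is estimated, using $H_m(u_j)\le\mu$, by $\|\chi\|_\infty\int(1-\psi_n)\,d\mu$, which tends in $n$ to $\|\chi\|_\infty\mu(\{u\le -t\})$ by monotone convergence; the analogous tail bound for $H_m(u)$ follows since $H_m(u)$ is a finite Borel measure not charging the $m$-polar set $\{u=-\infty\}$, hence $H_m(u)(\{u\le -t\})\to 0$ as $t\to\infty$. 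Finally, Lemma \ref{lem: GLZ cap} applied to $\mu$ together with the $m$-polarity of $\{u=-\infty\}$ gives $\mu(\{u\le -t\})\to 0$ as $t\to\infty$. Chaining the limits $j\to\infty$, then $n\to\infty$, then $t\to\infty$ delivers $\int \chi\,H_m(u_j)\to \int \chi\,H_m(u)$, as required.

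The central point is the interplay between Bedford-Taylor convergence on $\{u>-t\}$ (where everything is bounded) and the uniform tail control coming from the hypothesis $H_m(u_j)\le\mu$ with $\mu$ non-$m$-polar: this is precisely what prevents Hessian mass from escaping onto $\{u=-\infty\}$ in the limit, and is the only place where the non-polar domination enters decisively. I expect the only mildly delicate bookkeeping to be verifying that the Urysohn cutoffs $\psi_n$ can be chosen with compact support inside the open set $\{u>-t\}$ while exhausting it; this follows from the $\sigma$-compactness of this open subset of the compact manifold $X$.
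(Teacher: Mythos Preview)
There is a genuine gap: your claim that $\{u>-t\}$ is open is false. For an upper semicontinuous $u$ it is the sets $\{u<c\}$ that are open (equivalently $\{u\geq c\}$ is closed); the superlevel set $\{u>-t\}$ is in general only quasi-open. This breaks the Urysohn step: if $x\in\{u>-t\}$ is not an interior point, then every continuous function positive at $x$ has support meeting $\{u\leq -t\}$, so you cannot produce continuous cutoffs $\psi_n$ with $\mathrm{supp}(\psi_n)\subset\{u>-t\}$ and $\psi_n\uparrow\mathbf{1}_{\{u>-t\}}$. Your identity $(\ast)$ survives for $\chi$ supported in the \emph{interior} of $\{u>-t\}$, but the tail is then controlled by $\mu\bigl(X\setminus\mathrm{int}\{u>-t\}\bigr)$, over which you have no handle as $t\to\infty$.

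The paper's proof avoids this by introducing a second level $s>t$. Since $u$ is usc, $\{u\geq -t\}$ is genuinely closed (hence quasi-closed), and $\{u\geq -t\}\subset\{u>-s\}\subset\{u_j>-s\}$, so plurifine locality allows one to replace $H_m(u_j)$ by $H_m(u_j^s)$ on $\{u\geq -t\}$. Then the upper semicontinuity of Hessian mass on quasi-closed sets (Theorem~\ref{thm: convergence quasi set}), applied to the uniformly bounded decreasing sequence $u_j^s\searrow u^s$, gives $\limsup_j\int_{\{u\geq -t\}}H_m(u_j^s)\leq\int_{\{u\geq -t\}}H_m(u^s)=\int_{\{u\geq -t\}}H_m(u)$. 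Combined with the tail bound $\mu(\{u\leq -t\})\to 0$ and the lower semicontinuity from Theorem~\ref{thm: lsc of Hes measure}, this yields convergence of total mass, hence weak convergence. Your argument can be repaired by the same two-level device, trading the (non-existent) open-set cutoff for the closed set $\{u\geq -t\}$ and a second truncation parameter.
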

\begin{proof}
By Theorem \ref{thm: lsc of Hes measure} we have that $H_m(u) \leq \mu$ and it remains to prove the convergence of the total mass.


We can assume that $\sup_X u_j=\sup_X u= 0$. For a function $v$ and a constant $t$ we set $v^t := \max(v,-t)$.  For all $t>0$ we have 
$$
\mu(u \leq -t)  \leq f\left( \capa_m(u \leq -t) \right ),
$$
where $f$ is the continuous function in Lemma \ref{lem: GLZ cap}. By continuity of $f$ we have 
$$
\lim_{t\to+\infty}f\left( \capa_m(u \leq -t) \right )= 0.
$$ 
Therefore,  fixing $\varepsilon>0$, for $t>0$ large enough we  have
$$
\int_{\{u\leq -t\}} H_m(u_j) \leq \mu(u \leq -t)  \leq f\left( \capa_m(u \leq -t) \right ) \leq  \varepsilon,  \ \forall j.
$$
Thus, for fixed $s>t$ we have
\begin{eqnarray*}
\int_X H_m(u_j)  \leq    \int_{\{u\geq -t\}}  H_m(u_j) +    \varepsilon \leq  \int_{\{u\geq -t\}} H_m(u_j^s) +\varepsilon.
\end{eqnarray*} 
Here, we use Lemma \ref{lem: Identity principle} and the assumption that $u_j \geq u$ to have that 
$$
\Id_{\{u>-s\}} H_m(u_j) =\Id_{\{u>-s\}}H_m(u_j^s),
$$ 
hence
\begin{flalign*}
 \int_{\{u\geq -t\}}  H_m(u_j)  &= \int_{\{u\geq -t\}}  \Id_{\{u>-s\}} H_m(u_j)= \int_{\{u\geq -t\}}  \Id_{\{u>-s\}} H_m(u_j^s)\\ &=\int_{\{u\geq -t\}} H_m(u_j^s). 
\end{flalign*}
Since $\{u\geq -t\}$ is quasi compact and $u_j^s$ are uniformly bounded, letting $j\to +\infty$ we obtain 
$$
\limsup_{j}\int_X H_m(u_j) \leq \int_{\{u\geq -t\}}  H_m(u^s) + \varepsilon =\int_{\{u\geq -t\}}  H_m(u)+ \varepsilon.
$$ 
Letting $t\to +\infty$, and then $\varepsilon\to 0$ we arrive at the conclusion. 
\end{proof}

We are now in the position to prove the main result of this section. 

\begin{theorem}
	\label{thm: monotonicity}
	Let $u_1,...,u_m,v_1,...,v_m \in \SH_m(X,\omega)$ and assume that $u_j$ is more singular than $v_j$ for all $j$. Then  
	$$
	\int_X H_m(u_1,...,u_m)   \leq \int_X H_m(v_1,...,v_m).
	$$
\end{theorem}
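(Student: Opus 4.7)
The plan is to approximate each $u_j$ from above by an $\omega$-$m$-sh function with the same singularity type as $v_j$, use a mixed-entry strengthening of Proposition~\ref{prop: mass equality} to preserve the total Hessian mass along the approximation, and then invoke the lower semicontinuity of the non-$m$-polar Hessian mass (Theorem~\ref{thm: lsc of Hes measure}) in the limit. More concretely, for each $j$ pick $C_j>0$ with $u_j \le v_j + C_j$ and, for $s > \max_j C_j$, set
$$
w_{j,s} := \max(u_j, v_j - s) \in \SHm.
$$
Then $v_j - s \le w_{j,s} \le v_j + C_j$, so $w_{j,s}$ has the same singularity type as $v_j$, and as $s \to +\infty$ the family $w_{j,s}$ decreases pointwise to $u_j$. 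By quasi-continuity of $\omega$-$m$-sh functions combined with Dini's theorem on the complement of an open set of arbitrarily small $m$-capacity, this convergence is in $m$-capacity.

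The core intermediate claim is a mixed-entry version of Proposition~\ref{prop: mass equality}: if $\tilde u_j, \tilde v_j \in \SHm$ satisfy $|\tilde u_j - \tilde v_j| \le C$ for every $j$, then
$$
\int_X H_m(\tilde u_1, \dots, \tilde u_m) = \int_X H_m(\tilde v_1, \dots, \tilde v_m).
$$
For $t=(t_1,\dots,t_m)$ in the open simplex, the convex combinations $\phi_t := \sum_j t_j \tilde u_j$ and $\psi_t := \sum_j t_j \tilde v_j$ lie in $\SHm$ and satisfy $|\phi_t - \psi_t| \le C$, so Proposition~\ref{prop: mass equality} applied to the single functions $\phi_t$ and $\psi_t$ yields $\int_X H_m(\phi_t) = \int_X H_m(\psi_t)$. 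Expanding via Lemma~\ref{lem: multilinearity}, each side is a homogeneous polynomial of degree $m$ in $t$; agreement on the simplex forces agreement as polynomials, and matching the coefficient of $t_1 t_2 \cdots t_m$ (using the symmetry of $H_m$ in its entries) gives the claim. Specialised to $\tilde u_j = w_{j,s}$ and $\tilde v_j = v_j$, this reads
$$
\int_X H_m(w_{1,s}, \dots, w_{m,s}) = \int_X H_m(v_1, \dots, v_m) \qquad \text{for all } s \gg 1.
$$

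Finally, Theorem~\ref{thm: lsc of Hes measure} applied along the capacity-convergent sequence $w_{j,s} \to u_j$ (with trivial weights $\chi \equiv 1$) yields
$$
\liminf_{s \to +\infty} \int_X H_m(w_{1,s}, \dots, w_{m,s}) \ge \int_X H_m(u_1, \dots, u_m),
$$
which combined with the previous equality is the desired inequality. The substantive new point in this proof is the mixed-entry mass invariance: the slope-formula argument used in Proposition~\ref{prop: mass equality} for a single function has no direct mixed-entry analogue, but polarisation via Lemma~\ref{lem: multilinearity} converts the statement into a polynomial identity on the simplex, which is what makes it tractable.
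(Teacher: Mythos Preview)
Your proof is correct and follows essentially the same approach as the paper: first establish the mixed-entry mass equality for functions with the same singularity type via polarisation (convex combinations plus Lemma~\ref{lem: multilinearity} and Proposition~\ref{prop: mass equality}), then approximate by $w_{j,s}=\max(u_j,v_j-s)$ and pass to the limit using the lower semicontinuity Theorem~\ref{thm: lsc of Hes measure}. The only cosmetic difference is that you make the capacity convergence of the decreasing sequence explicit, whereas the paper leaves it implicit.
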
 

\begin{proof}
	We first assume that  $u_p$ has the same singularity as $v_p$ for all $p=1,...,m$. For $t=(t_1,...,t_m) \in [0,1]^m$  with $\sum_{p=1}^m t_p =1$, we set 
	$$
	\phi_t := \sum_{p=1}^m t_p u_p, \quad \psi_t :=  \sum_{p=1}^m t_p v_p.
	$$
	Then $\phi_t,\psi_t\in \SH_m(X,\omega)$ have the same singularity. It thus follows from Proposition \ref{prop: mass equality} that 
	$$
	\int_X H_m(\phi_t) = \int_X H_m(\psi_t).
	$$
	From this and Lemma \ref{lem: multilinearity} we obtain an equality between two polynomials in $(t_1,...,t_m)$. Identifying the coefficients we obtain 
	$$
	\int_X H_m(u_1,...,u_m) =\int_X H_m(v_1,...,v_m). 
	$$
	To treat the general case we define, for $C>0$, $w_p^C := \max(u_p,v_p-C)$. Then the previous step yields 
	$$
	\int_X H_m(w_1^C,...,w_m^C) =\int_X H_m(v_1,...,v_m). 
	$$
	Letting $C\to +\infty$ and using Theorem \ref{thm: lsc of Hes measure} we arrive at the conclusion. 
\end{proof}

Having the monotonicity theorem in hand most of the pluripotential tools in \cite{DDL2,DDL4} can be adapted directly to the Hessian setting. Since the references \cite{DDL2,DDL4} are quite recent, we give the full details.

\subsection{Envelopes} \label{sect: envelope} Let $f$ be a function on $X$. We define
$$
P_{(\omega,m)}(f) := \left ( \sup \{ u  \setdef   u\in \SH_m(X,\omega), \ u \leq f \}\right )^*,
$$
where the $*$ operator means the upper semicontinuous regularization. 
Following \cite{RWN14}, \cite{DDL2,DDL4} we define 
$$
P_{(\omega,m)}[f] := \left (\lim_{C\to +\infty} P_{(\omega,m)} (\min(f+C,0))\right )^*. 
$$
 If $(\omega,m)$ is fixed we will simply denote these envelopes by $P(f)$ and $P[f]$. For $u_1,...,u_N \in \SH_m(X,\omega)$ we denote $P(u_1,...,u_N) := P(\min(u_1,...,u_N))$.

\begin{lemma}\label{lem: mass u Pu}
If $u_1,...,u_m, v_1,...,v_m \in \SHm$ satisfy $P[u_p]=P[v_p]$, for all $p$, then 
$$
\int_X H_m(u_1,...,u_m) =\int_X H_m(v_1,...,v_m).
$$ 
\end{lemma}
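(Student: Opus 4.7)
The plan is to prove that, for any $\omega$-$m$-sh functions $u_1,\ldots,u_m$,
$$
\int_X H_m(u_1,\ldots,u_m) \;=\; \int_X H_m(P[u_1],\ldots,P[u_m]);
$$
applying this identity separately to $(u_1,\ldots,u_m)$ and $(v_1,\ldots,v_m)$ and using the hypothesis $P[u_p]=P[v_p]$ delivers the lemma. Setting $\phi_p := P[u_p]$, the workhorses of the proof are the approximating envelopes
$$
\phi_p^C := P_{(\omega,m)}\bigl(\min(u_p+C,0)\bigr), \qquad C>0,
$$
which form a non-decreasing family in $\SH_m(X,\omega)$, bounded above by $0$, whose upper semicontinuous limit as $C\to\infty$ equals $\phi_p$.

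I would first record two pointwise comparisons. Because $\min(u_p+C,0)$ is upper semicontinuous, the envelope inequality yields $\phi_p^C\le\min(u_p+C,0)\le u_p+C$, equivalently $\phi_p^C-C\le u_p$. Conversely, for every $C$ large enough the function $u_p-\sup_X u_p\in \SH_m(X,\omega)$ is non-positive and bounded above by $u_p+C$, hence by $\min(u_p+C,0)$; being a competitor in the envelope, it is dominated by $\phi_p^C$, and therefore by $\phi_p$, so $u_p\le \phi_p+\sup_X u_p$.

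The second comparison says $u_p$ is more singular than $\phi_p$ for every $p$, so Theorem \ref{thm: monotonicity} gives
$$
\int_X H_m(u_1,\ldots,u_m)\le \int_X H_m(\phi_1,\ldots,\phi_m).
$$
For the reverse, the first comparison makes $\phi_p^C-C$ more singular than $u_p$; since the mixed Hessian is invariant under adding constants to its slots, Theorem \ref{thm: monotonicity} produces
$$
\int_X H_m(\phi_1^C,\ldots,\phi_m^C) \;=\; \int_X H_m(\phi_1^C-C,\ldots,\phi_m^C-C)\;\le\; \int_X H_m(u_1,\ldots,u_m).
$$
As $C\to\infty$, the sequences $\phi_p^C$ increase almost everywhere to $\phi_p$, so Corollary \ref{coro: increasing convergence} gives weak convergence of the mixed Hessian measures; since $X$ is compact the total masses converge, producing the reverse inequality. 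Applying the same argument to $v$ then closes the proof.

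The hardest step is the reverse inequality $\int_X H_m(\phi_1,\ldots,\phi_m)\le \int_X H_m(u_1,\ldots,u_m)$: monotonicity applied with only the crude order $u_p\le \phi_p+\mathrm{const}$ points in the wrong direction, so one must exploit the extra structure of $P[u_p]$ as an increasing limit of canonical envelopes, with the $-C$ translation turning $\phi_p^C-C$ into a function genuinely more singular than $u_p$.
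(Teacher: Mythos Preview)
Your proof is correct and essentially identical to the paper's: both show $\int_X H_m(u_1,\ldots,u_m)=\int_X H_m(P[u_1],\ldots,P[u_m])$ via the approximants $\phi_p^C=P(\min(u_p+C,0))$, Theorem \ref{thm: monotonicity}, and Corollary \ref{coro: increasing convergence}. The only cosmetic difference is that the paper combines your two pointwise comparisons into the single observation that $\phi_p^C$ has the \emph{same} singularity type as $u_p$, obtaining the equality $\int_X H_m(u_1,\ldots,u_m)=\int_X H_m(\phi_1^C,\ldots,\phi_m^C)$ in one stroke before passing to the limit, rather than proving the two inequalities separately.
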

\begin{proof}
For each $C>0$ $P(u_j+C,0)$ has the same singularity as $u_j$, hence by Theorem \ref{thm: monotonicity},  
$$
\int_X H_m(u_1,...,u_m) = \int_X H_m(P(u_1+C,0),...,P(u_m+C,0)). 
$$
Letting $C\to +\infty$,  Corollary \ref{coro: increasing convergence} ensures that
$$
\int_X H_m(u_1,...,u_m) = \int_X H_m(P[u_1],...,P[u_m]).
$$
The same arguments apply for $v_1,...,v_m$, yielding the result. 
\end{proof}

\begin{lemma}\label{lem: concavity of P}
	If $u,v \in \SH_m(X,\omega)$ and $t \in (0,1)$ then 
	$$
	P[t u+ (1-t) v] \geq t P[u] + (1-t) P[v].
	$$
\end{lemma}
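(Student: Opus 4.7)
The plan is to establish the inequality at the level of the truncated envelopes $P(\min(\,\cdot\,+C,0))$ and then send the truncation parameter $C$ to $+\infty$.

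First I would fix $C>0$ and put $u_C:=P(\min(u+C,0))$, $v_C:=P(\min(v+C,0))$. Both $u_C,v_C$ belong to $\SH_m(X,\omega)$, are $\le 0$, and satisfy $u_C\le u+C$, $v_C\le v+C$. The class $\SH_m(X,\omega)$ being stable under convex combinations (convex sums of $m$-positive currents remain $m$-positive), the function $w_C:=tu_C+(1-t)v_C$ is $\omega$-$m$-subharmonic; the bounds $w_C\le 0$ and $w_C\le tu+(1-t)v+C$ give $w_C\le \min(tu+(1-t)v+C,0)$, and the defining maximality of the envelope then yields
\[
tu_C+(1-t)v_C \;\le\; P\bigl(\min(tu+(1-t)v+C,\,0)\bigr).
\]

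Next I would send $C\to+\infty$. The map $C\mapsto \min(\,\cdot\,+C,0)$ is pointwise non-decreasing, so the families $\{u_C\}_C$, $\{v_C\}_C$ and the right-hand side above are all pointwise non-decreasing in $C$; their pointwise limits agree with their USC regularizations outside an $m$-polar set, and those regularizations are, by the very definition of $P[\cdot]$, the functions $P[u]$, $P[v]$ and $P[tu+(1-t)v]$ respectively. Passing to the limit in the displayed inequality and evaluating outside the exceptional $m$-polar set yields
\[
tP[u]+(1-t)P[v] \;\le\; P[tu+(1-t)v]
\]
off an $m$-polar set. Both sides being $\omega$-$m$-subharmonic, hence upper semicontinuous, and $m$-polar sets having empty interior and Lebesgue measure zero (by the $L^p$-integrability property of $\omega$-$m$-sh functions recalled in Section~\ref{sect: Preliminary}), the $\limsup$ defining each side at any point $x\in X$ can be taken along a sequence avoiding the exceptional set, so the inequality extends to all of $X$.

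The only subtlety, which is standard in pluripotential theory, lies in the USC-regularization bookkeeping at the limit $C\to+\infty$: the pointwise limit of the increasing sequence $u_C$ need not be USC and may differ from $P[u]$ on an $m$-polar set, and a priori a convex combination of USC regularizations need not equal the USC regularization of the convex combination. Both discrepancies live on $m$-polar sets, so the \emph{inequality off an $m$-polar set extends everywhere} principle invoked above closes the argument without requiring any new idea.
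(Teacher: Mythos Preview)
Your proof is correct and follows essentially the same approach as the paper: establish the inequality at the level of the truncated envelopes $P(\min(\,\cdot\,+C,0))$ via convexity of $\SH_m(X,\omega)$ and the defining maximality, then let $C\to+\infty$. The paper compresses the limit step into a single sentence, whereas you carefully track the USC-regularization bookkeeping and the extension from ``off an $m$-polar set'' to all of $X$; this extra care is warranted but does not represent a different idea.
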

\begin{proof}
	For each $C>0$ we have that $t P(u+C,0) + (1-t) P(v+C,0)$ is $\omega$-$m$-sh and it is smaller than $\min(tu+(1-t)v+C,0)$. Thus
	$$
	P(tu+(1-t)v +C,0)  \geq t P(u+C,0) + (1-t) P(v+C,0),
	$$
	hence letting $C\to +\infty$ we obtain the result. 
\end{proof}

\begin{prop}
	\label{prop: orthogonal}
	Assume that $f = a\varphi-b\psi$, where $\varphi,\psi\in \SH_m(X,\omega)$, and $a, b$ are positive constants.   If $P(f)\not \equiv -\infty$ then 
	$$
	\int_{\{P(f)<f\}} H_m(P(f)) =0. 
	$$
\end{prop}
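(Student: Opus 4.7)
The proposition is the familiar ``orthogonality relation'' for envelopes; my plan is to adapt the Monge--Amp\`ere argument of \cite{DDL2,DDL4} to the Hessian case, leaning on the monotonicity theorem proved in Section \ref{sect: monotonicity} and the tools assembled in Section \ref{sect: Preliminary}.

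\textbf{Step 1 (Quasi-openness of the defect set).} Set $v := P(f)$ and $U := \{v<f\} = \{a\varphi > v + b\psi\}$. Since $a\varphi$, $v$ and $b\psi$ are $\omega$-m-sh (up to multiplicative rescaling of $\omega$), hence quasi-continuous, the difference $f-v$ is quasi-continuous, and therefore $U$ is a quasi-open set. This is the natural setting in which to apply the plurifine locality (Lemma \ref{lem: Identity principle}) and the lower semicontinuity of the Hessian mass on quasi-open sets (Theorem \ref{thm: lsc of Hes measure}).

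\textbf{Step 2 (Reduction to bounded $\varphi,\psi$).} Consider the truncations $\varphi^C := \max(\varphi,-C)$, $\psi^C := \max(\psi,-C)$, set $f_C := a\varphi^C - b\psi^C$, and write $v_C := P(f_C)$. As $C\to +\infty$, $\varphi^C\downarrow \varphi$ and $\psi^C\downarrow\psi$, from which $v_C \downarrow v$ and the contact sets $\{v_C < f_C\}$ relate to $\{v < f\}$ in a way controlled by the monotonicity theorem (Theorem \ref{thm: monotonicity}) and the weak-limit result Corollary \ref{coro: increasing convergence}. This reduces the statement to the case where $\varphi$ and $\psi$ (and hence $v$) are bounded.

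\textbf{Step 3 (Contradiction by perturbation).} Suppose for contradiction that $\int_U H_m(v)>0$. Quasi-continuity of $f-v$ yields $\delta>0$ and a quasi-compact $K\subset\{v+3\delta<f\}$ with $\int_K H_m(v)>0$. Since $H_m(v)$ does not charge $m$-polar sets, after normalization the measure $\nu := \alpha\,\mathbf{1}_K H_m(v)$ is a non-$m$-polar probability measure, so by the bounded existence theory of \cite{LN15} there is a bounded $u\in\SH_m(X,\omega)$ with $\sup_X u=0$ and $H_m(u)=\nu$. For small $\lambda>0$ and a suitable constant $M$, the candidate $w_\lambda := (1-\lambda)v + \lambda(u+M)$ lies in $\SH_m(X,\omega)$ and, by the multilinearity (Lemma \ref{lem: multilinearity}) together with the mixed Hessian inequality (Lemma \ref{lem: mixed Hes ineq}), carries strictly more Hessian mass than $v$ on $K$. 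Choosing $\lambda$ small enough and using $f-v\geq 3\delta$ on $K$ one arranges $w_\lambda \leq f$ on $K$; elsewhere one passes to the envelope $P(\min(w_\lambda,f))$ (or directly to $\max(v,w_\lambda)$ restricted via a plurifine cutoff, invoking Lemma \ref{lem: Identity principle} and Proposition \ref{prop: viscosity}) to produce an $\omega$-m-sh function dominated by $f$ everywhere yet strictly above $v$ on a subset of $K$ of positive $H_m$-measure. This contradicts the definition of $v = P(f)$ as the envelope.

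\textbf{Main obstacle.} The essential difficulty is Step 3: fabricating the strictly $\omega$-m-sh improvement $w$ that remains below $f$ globally. In the Monge--Amp\`ere setting this is made easy by the stronger comparison principle, the full integration-by-parts machinery, and the local boundedness of $\omega$-psh functions. For $m<n$ the weaker local integrability of $\omega$-m-sh functions and the lack of a Dirichlet-problem construction on a compact Kähler manifold force one to work exclusively with global tools. The substitutes available here are exactly the plurifine locality (Lemma \ref{lem: Identity principle}), the viscosity inequality on the contact set (Proposition \ref{prop: viscosity}), and, most crucially, the monotonicity theorem for the total Hessian mass (Theorem \ref{thm: monotonicity}).
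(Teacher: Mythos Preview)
Your proposal takes a fundamentally different route from the paper and, as written, has a genuine gap.

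The paper's proof is short and avoids any balayage-type construction. It first treats the case where $\varphi$ is continuous: approximating $\psi$ from above by continuous $\omega$-$m$-sh functions $\psi_j$, one knows from \cite[Theorem 3.2]{LN15} that the envelope $u_j := P(a\varphi - b\psi_j)$ satisfies $\int_X \min(f_j - u_j,1)\,H_m(u_j)=0$ (this is the orthogonality relation for \emph{continuous} obstacles, already available). Passing to the limit via \cite[Proposition 3.12]{Lu13} and using the domination principle identifies the limit with $P(f)$ and gives the conclusion. The general case then follows by approximating $\varphi$ from above by continuous $\omega$-$m$-sh functions, so that $P(f_j)\searrow P(f)$, and invoking Theorem \ref{thm: lsc of Hes measure}. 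No perturbation or competitor is needed; the entire weight is carried by the continuous-obstacle result and stability under monotone limits.

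Your Step 2 is already shaky: with $\varphi^C\searrow\varphi$ and $\psi^C\searrow\psi$, the obstacle $f_C = a\varphi^C - b\psi^C$ is \emph{not} monotone in $C$ (the first term decreases, the second increases), so you cannot conclude $v_C\downarrow v$, and the limiting argument breaks. The paper avoids this by approximating $\varphi$ and $\psi$ \emph{separately}, in two steps, each time keeping the obstacle monotone.

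The real gap is Step 3. Your competitor $w_\lambda = (1-\lambda)v + \lambda(u+M)$ may exceed $f$ off $K$, and your proposed fix---passing to $P(\min(w_\lambda,f))$---is self-defeating: since $\min(w_\lambda,f)\leq f$, one has $P(\min(w_\lambda,f))\leq P(f)=v$ tautologically, so no contradiction arises. The alternative ``plurifine cutoff via $\max(v,w_\lambda)$'' is not explained and does not produce a global $\omega$-$m$-sh function below $f$. On a compact manifold there is no obvious local Dirichlet modification that stays below an obstacle of the form $a\varphi - b\psi$ (which is neither $m$-sub- nor $m$-superharmonic), so the balayage heuristic you are reaching for does not apply. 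This is precisely why the paper routes the argument through the continuous-obstacle case of \cite{LN15}, where the orthogonality is already known.
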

Here,  the function $f= a\varphi- b\psi$ is well-defined in the complement of a pluripolar set and the inequality $u \leq  a\varphi-b\psi$, for $u \in \SHm$, means  $u+ b\psi \leq a \varphi$ on $X$.
\begin{proof}
We first assume that $\varphi$ is continuous. Then $P(f)$ is bounded. Let $\psi_j$ be a sequence of continuous $\omega$-$m$-sh functions decreasing to $\psi$ and set $f_j := a\varphi-b\psi_j$, $u_j=P(f_j)$. By \cite{LN15} we have 
	$$
	\int_X \min(f_j-u_j,1) H_m(u_j) =0, \ \forall j. 
	$$
Let $u := (\lim_{j\to +\infty} u_j)^*$. It follows from  \cite[Proposition 3.12]{Lu13} that
	$$
	\int_X (\min(f-u,1) H_m(u) =0,
	$$
	hence $\int_{\{u<P(f)\}} H_m(u) =0$ and the domination principle \cite[Lemma 3.5]{DL15} gives $u=P(f)$. By the above equality we also have that $H_m(u)$ vanishes in $\{u<f\}$. 

	We now treat the general case. Let $\varphi_j$ be a sequence of continuous $\omega$-$m$-sh functions decreasing to $\varphi$ and set $f_j := a \varphi_j -b\psi$. Then $P(f_j) \searrow P(f)$.  From the first step we have
	$$
	\int_X \min(f_j-P(f_j),1) H_m(P(f_j)) =0, \ \forall j. 
	$$
	Letting $j\to +\infty$ and using Theorem \ref{thm: lsc of Hes measure} we arrive at the conclusion. 
\end{proof}

From Proposition \ref{prop: orthogonal} and Proposition \ref{prop: viscosity} we obtain the following : 
\begin{coro}\label{cor: Hes rooftop}
Let $u,v\in \SH_m(X,\omega)$ be such that $P(u,v)\in \SH_m(X,\omega)$. Then 
$$
H_m(P(u,v)) \leq {\bf 1}_{\{P(u,v) =u\}} H_m(u) + {\bf 1}_{\{P(u,v)= v\}} H_m(v). 
$$ 
In particular, $H_m(P[u]) \leq {\bf 1}_{\{P[u]=0\}} \omega^n$. Finally, if $H_m(u) \leq \mu$ and $H_m(v) \leq \mu$, for a non-$m$-polar positive measure $\mu$, then $H_m(P[u,v]) \leq \mu$. 
\end{coro}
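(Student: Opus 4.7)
The plan is to reduce each claim to the viscosity comparison of Proposition \ref{prop: viscosity} on the contact set, combined with an envelope orthogonality analogous to Proposition \ref{prop: orthogonal} off the contact set.

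For the main inequality, since $P(u,v)\le u$, Proposition \ref{prop: viscosity} gives
\[ \mathbf{1}_{\{P(u,v)=u\}} H_m(P(u,v)) \le \mathbf{1}_{\{P(u,v)=u\}} H_m(u), \]
and symmetrically for $v$. What remains is the orthogonality statement that $H_m(P(u,v))$ vanishes on $\{P(u,v)<u\}\cap\{P(u,v)<v\}=\{P(u,v)<\min(u,v)\}$. Although $\min(u,v)$ is not literally of the form $a\varphi-b\psi$ required by Proposition \ref{prop: orthogonal}, its proof transfers with only cosmetic modifications: approximate by continuous $\omega$-$m$-sh $u_j\searrow u$, $v_j\searrow v$, set $f_j:=\min(u_j,v_j)$, invoke the envelope characterization from \cite{LN15} in the bounded-continuous case to get $\int_X \min(f_j-P(f_j),1)H_m(P(f_j))=0$, and pass to the limit via Theorem \ref{thm: lsc of Hes measure} together with the domination principle.

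The ``in particular'' statement follows by applying the first inequality to the pair $(u+C,0)$. The contact set $\{P(u+C,0)=u+C\}$ is contained in $\{u\le -C\}$, which contracts to the $m$-polar locus $\{u=-\infty\}$, so the term $\mathbf{1}_{\{P(u+C,0)=u+C\}}H_m(u)$ vanishes in the limit (recall $H_m(u)$ does not charge $m$-polar sets). Meanwhile $P(u+C,0)\nearrow P[u]$ by definition, so Corollary \ref{coro: increasing convergence} gives weak convergence $H_m(P(u+C,0))\to H_m(P[u])$; the sets $\{P(u+C,0)=0\}$ increase to a subset of $\{P[u]=0\}$, producing the bound $H_m(P[u])\le \mathbf{1}_{\{P[u]=0\}}\omega^n$.

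For the last statement, a naive application of the first inequality would only yield $2\mu$ on the overlap $\{P[u,v]=u=v\}$. To recover the sharp $\mu$, I would refine using the disjoint partition $X=\{u\le v\}\cup\{u>v\}$: running the same viscosity argument on each piece gives
\[ H_m(P[u,v]) \le \mathbf{1}_{\{P[u,v]=u\}\cap\{u\le v\}} H_m(u) + \mathbf{1}_{\{P[u,v]=v\}\cap\{u>v\}} H_m(v), \]
and disjointness of the two indicator sets, together with $H_m(u),H_m(v)\le\mu$, yields $H_m(P[u,v])\le\mu$. The main obstacle is the orthogonality in the first step, since $\min(u,v)$ falls outside the literal hypothesis of Proposition \ref{prop: orthogonal}; every subsequent step is a direct contact-set reading or a monotone-limit argument.
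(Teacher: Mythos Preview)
Your proposal is essentially correct and follows the same two ingredients the paper cites (Proposition~\ref{prop: orthogonal} for orthogonality off the contact set, Proposition~\ref{prop: viscosity} for the bound on the contact set). The limiting argument for the ``in particular'' claim and the disjoint--partition refinement for the final claim are both sound.

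There is, however, one unnecessary detour. You assert that $\min(u,v)$ ``falls outside the literal hypothesis of Proposition~\ref{prop: orthogonal}'' and therefore propose to rerun its proof with a fresh approximation. In fact the hypothesis is met verbatim: writing
\[
\min(u,v) \;=\; (u+v)-\max(u,v) \;=\; 2\cdot\frac{u+v}{2} \,-\, 1\cdot \max(u,v),
\]
both $\varphi:=\tfrac{u+v}{2}$ and $\psi:=\max(u,v)$ lie in $\SH_m(X,\omega)$, so Proposition~\ref{prop: orthogonal} applies directly with $a=2$, $b=1$. This is presumably why the paper can afford the one-line citation. With this observation the orthogonality $\int_{\{P(u,v)<\min(u,v)\}}H_m(P(u,v))=0$ is immediate, and your adaptation of the approximation argument, while correct, is superfluous. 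The remaining steps (viscosity comparison on each branch of the contact set, the monotone limit $P(u+C,0)\nearrow P[u]$ via Corollary~\ref{coro: increasing convergence}, and the disjoint splitting $\{u\le v\}\cup\{u>v\}$ to avoid the factor~$2$) coincide with what the paper intends.
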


\begin{definition}
A function $\phi \in \SH_m(X,\omega)$ is a model potential if $\int_X H_m(\phi)>0$ and $P[\phi]=\phi$. 

Given a model potential $\phi$, the class $\Ephi:= \mathscr{E}_{\phi}(X,\omega,m)$ consists of functions $u\in \SH_m(X,\omega)$ such that $u$ is more singular than $\phi$ and $\int_X H_m(u) = \int_X H_m(\phi)$.  
\end{definition}

\subsection{Comparison principle}
\begin{theorem}\label{thm: CP}
Let $\phi_2,...,\phi_{m}, u,v \in \SH_m(X,\omega)$ and assume that $P[u]\geq P[v]$.  Then 
$$
\int_{\{u<v\}} H_m(v,\phi_2,...,\phi_{m}) \leq \int_{\{u<v\}} H_m(u,\phi_2,...,\phi_{m}). 
$$
\end{theorem}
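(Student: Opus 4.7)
I would adapt the Monge--Amp\`ere strategy of \cite{DDL2,DDL4} to the Hessian setting in a form that circumvents any mixed-Hessian viscosity statement on the contact set: perturb $v$ to $v-\varepsilon$, run an exact algebraic identity via plurifine locality, and then let $\varepsilon\to 0^+$ along a ``generic'' sequence. The hypothesis $P[u]\geq P[v]$ is used exactly once, to identify the envelope (and hence the total Hessian mass) of $\max(u,v-\varepsilon)$ with that of $u$.

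After subtracting a common constant I may assume $u,v\leq 0$, so that $u\leq P[u]$ and $v\leq P[v]$; this normalisation alters neither the set $\{u<v\}$ nor the Hessian measures, and it leaves the hypothesis $P[u]\geq P[v]$ unchanged. Fix $\varepsilon>0$ and set $w_\varepsilon:=\max(u,v-\varepsilon)$. Since $v-\varepsilon\leq P[v]\leq P[u]$ and $u\leq P[u]$, we have $w_\varepsilon\leq P[u]$, while trivially $w_\varepsilon\geq u$. Combining these with the monotonicity of the envelope and the fact that $P[u]$ is itself a model potential ($P[P[u]]=P[u]$), we deduce $P[w_\varepsilon]=P[u]$, and so Lemma \ref{lem: mass u Pu} gives the crucial mass equality
$$\int_X H_m(w_\varepsilon,\phi_2,\ldots,\phi_m)=\int_X H_m(u,\phi_2,\ldots,\phi_m).$$

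By plurifine locality (Corollary \ref{cor: plurifine}), $H_m(w_\varepsilon,\phi_2,\ldots,\phi_m)$ coincides with $H_m(u,\phi_2,\ldots,\phi_m)$ on $\{u>v-\varepsilon\}$ and with $H_m(v-\varepsilon,\phi_2,\ldots,\phi_m)=H_m(v,\phi_2,\ldots,\phi_m)$ on $\{u<v-\varepsilon\}$. Decomposing the two total masses above along $\{u>v-\varepsilon\}\sqcup\{u=v-\varepsilon\}\sqcup\{u<v-\varepsilon\}$ and subtracting, the contributions on $\{u>v-\varepsilon\}$ cancel and one obtains the identity
$$\int_{\{u<v-\varepsilon\}}\bigl[H_m(u,\phi_2,\ldots)-H_m(v,\phi_2,\ldots)\bigr]=\int_{\{u=v-\varepsilon\}}\bigl[H_m(w_\varepsilon,\phi_2,\ldots)-H_m(u,\phi_2,\ldots)\bigr].$$
The level sets $\{u-v=-\varepsilon\}$ are pairwise disjoint as $\varepsilon$ varies, and the finite non-$m$-polar measure $H_m(u,\phi_2,\ldots,\phi_m)$ can charge only countably many of them. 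For each $\varepsilon$ outside this countable exceptional set, the right-hand side reduces to the nonnegative quantity $\int_{\{u=v-\varepsilon\}}H_m(w_\varepsilon,\phi_2,\ldots)$, and we conclude
$$\int_{\{u<v-\varepsilon\}}H_m(v,\phi_2,\ldots,\phi_m)\leq\int_{\{u<v-\varepsilon\}}H_m(u,\phi_2,\ldots,\phi_m).$$
Letting $\varepsilon\to 0^+$ through such generic values, monotone convergence on the increasing sets $\{u<v-\varepsilon\}\nearrow\{u<v\}$ (both Hessian measures being finite) finishes the proof.

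The only non-routine step is the envelope identification $P[w_\varepsilon]=P[u]$, and this is exactly where the hypothesis $P[u]\geq P[v]$ is used; it depends on the monotonicity of $P[\,\cdot\,]$ and on the fact that $P[u]$ is a model potential, a property intrinsic to the envelope construction. The clean feature of the perturbation-plus-disjointness trick is that it sidesteps the need for a mixed-Hessian analogue of Proposition \ref{prop: viscosity} on the contact set $\{u=v-\varepsilon\}$, so the argument works uniformly for bounded and unbounded potentials alike.
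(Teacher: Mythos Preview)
Your proof is correct and follows essentially the same route as the paper: perturb to $w_\varepsilon=\max(u,v-\varepsilon)$, use $P[w_\varepsilon]=P[u]$ together with Lemma~\ref{lem: mass u Pu} to equate total masses, apply plurifine locality, and let $\varepsilon\to 0^+$. The only difference is cosmetic: where you keep an exact identity and invoke the ``generic $\varepsilon$'' trick to kill the contact-set term $\int_{\{u=v-\varepsilon\}}H_m(u,\phi_2,\ldots)$, the paper simply drops the contact set from the outset, obtaining directly
\[
\int_{\{u<v-\varepsilon\}}H_m(v,\phi_2,\ldots,\phi_m)\ \leq\ \int_{\{u\leq v-\varepsilon\}}H_m(u,\phi_2,\ldots,\phi_m),
\]
and then observes that both $\{u<v-\varepsilon\}$ and $\{u\leq v-\varepsilon\}$ increase to $\{u<v\}$ as $\varepsilon\to 0^+$. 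This is marginally shorter, but your version is equally valid.
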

\begin{proof}
Fix $\varepsilon>0$ and set $v_{\varepsilon}:= \max(v-\varepsilon,u)$. Then $P[v^{\varepsilon}] =P[u]$, hence by Lemma \ref{lem: mass u Pu} we have 
$$
\int_{X} H_m(v^{\varepsilon},\phi_2,...,\phi_m) = \int_X H_m(u,\phi_2,...,\phi_m). 
$$ 
By Lemma \ref{lem: Identity principle} we also have 
$$
\int_X H_m(v^{\varepsilon},\phi_2,...,\phi_m)  \geq  \int_{\{u>v-\varepsilon\}} H_m(u,\phi_2,...,\phi_m) + \int_{\{u<v-\varepsilon\}} H_m(v,\phi_2,...,\phi_m). 
$$
Comparing these we arrive at 
$$
\int_{\{u<v-\varepsilon\}} H_m(v,\phi_2,...,\phi_m) \leq \int_{\{u\leq v-{\varepsilon}\}} H_m(u,\phi_2,...,\phi_m). 
$$
Letting $\varepsilon \to 0^+$ we obtain the result. 
\end{proof}

\subsection{Domination principle}

\begin{lemma}\label{lem: non-collapsing}
Assume that $u\in \SH_m(X,\omega)$ and $\int_X H_m(u) >0$. If $E\subset X$ is a Borel set such that $\int_E \omega^n>0$ then there exists $v \in \SH_m(X,\omega)$ such that $v$ has the same singularity as $u$ and 
$$
\int_E H_m(v) >0.
$$
\end{lemma}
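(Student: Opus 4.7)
The easy case is $\int_E H_m(u)>0$: just take $v=u$. So assume henceforth that $\int_E H_m(u)=0$.

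\emph{Bounded case.} If $u\in L^{\infty}(X)$, the natural candidate is $v:=(1-\varepsilon)u$ for a small $\varepsilon\in(0,1)$. Since
\[
\omega+dd^c v = \varepsilon\,\omega + (1-\varepsilon)(\omega+dd^c u)
\]
is an $m$-positive combination, $v\in\SH_m(X,\omega)$; and $v-u=-\varepsilon u$ is bounded, so $v$ has the same singularity as $u$. Applying the multilinearity identity (Lemma \ref{lem: multilinearity}) to the decomposition $v=(1-\varepsilon)u+\varepsilon\cdot 0$, the expansion of $H_m(v)$ contains the pure term $\varepsilon^m H_m(0)=\varepsilon^m\omega^n$, giving $\int_E H_m(v)\geq\varepsilon^m\int_E\omega^n>0$.

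\emph{Unbounded case.} When $u$ is unbounded the function $(1-\varepsilon)u$ is strictly more singular than $u$, so the direct convex-combination argument fails. The plan is to localize. Since $\{u=-\infty\}$ is $m$-polar and therefore has vanishing $\omega^n$-measure, the set $E\cap\{u>-\infty\}$ carries all of $\int_E\omega^n$, so we may pick $s>0$ and a coordinate neighbourhood $U$ where $u\geq -s$ such that $\int_{U\cap E}\omega^n>0$. On $U$, $u$ is bounded below, and the bounded construction from Case 1 produces a bounded local $\omega_u$-$m$-sh perturbation $g_0$ of $u|_U$ whose Hessian mass on $U\cap E$ is strictly positive. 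We then glue $g_0$ to the zero perturbation outside $U$ (via a suitable cut-off / envelope construction, analogous to the relative $P$-envelopes of Section \ref{sect: envelope}), producing a bounded $\omega_u$-$m$-subharmonic function $g$ on all of $X$. Setting $v:=u+g$, we have $v-u=g$ bounded so $v$ has the same singularity as $u$, while by the plurifine locality (Lemma \ref{lem: Identity principle}) the Hessian $H_m(v)$ coincides with the locally-perturbed Hessian inside $U$ and therefore inherits the positive mass on $E$.

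The main obstacle is the gluing step: producing a genuinely \emph{bounded} $\omega_u$-$m$-subharmonic $g$ on $X$ whose local Hessian contribution on $U\cap E$ survives globally. This is precisely a relative bounded Hessian problem; the technical tools one uses are the mixed Hessian inequality (Lemma \ref{lem: mixed Hes ineq}) to keep positive mass under mixed operations, the relative envelope $P_{(\omega_u,m)}(\cdot)$ to produce the bounded $g$, and the plurifine locality to transfer the local positive-mass conclusion to the global $v$. If the direct gluing is awkward, an alternative is to argue by contradiction: assuming $\int_E H_m(v)=0$ for every $v$ in the singularity class $\mathcal{F}_u:=\{v\in\SH_m(X,\omega):v-u\in L^{\infty}(X)\}$ and using the convexity of $\mathcal{F}_u$ plus Lemma \ref{lem: multilinearity}, every mixed Hessian mass $\int_E H_m(v_1,\ldots,v_m)$ with $v_i\in\mathcal{F}_u$ must vanish; then choose the $v_i$ via the local construction above to force $\int_E\omega^n=0$, the desired contradiction.
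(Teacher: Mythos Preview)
Your bounded case is correct and pleasantly elementary. The unbounded case, however, has a genuine gap, and the tools you invoke to close it do not exist in the paper's framework.

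\medskip

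\textbf{The gluing step fails.} You propose to produce a bounded ``$\omega_u$-$m$-subharmonic'' function $g$ via an envelope $P_{(\omega_u,m)}(\cdot)$. But $\omega_u=\omega+dd^c u$ is only an $m$-positive current, not a K\"ahler form; the paper's definition of $m$-subharmonicity and of the envelope $P_{(\omega,m)}$ requires a smooth K\"ahler background, so these objects are undefined here. If instead you try a direct max-gluing, note that for $u\leq 0$ one has $(1-\varepsilon)u\geq u$, hence $\max((1-\varepsilon)u+C,\,u)=(1-\varepsilon)u+C$ everywhere, which does \emph{not} have the same singularity as $u$. The reverse choice $\max((1-\varepsilon)u-C,\,u)$ equals $u$ on $\{u>-C/\varepsilon\}$, so the perturbation lands exactly where you don't want it. There is no obvious bounded $\omega$-$m$-sh modification of $u$ that keeps the singularity type and injects a piece of $\omega^n$ on a prescribed region.

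\medskip

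\textbf{The contradiction argument is incomplete.} The polarization step (all mixed masses $\int_E H_m(v_1,\ldots,v_m)$ vanish for $v_i\in\mathcal F_u$) is fine, but you then say ``choose the $v_i$ via the local construction above''---which was not carried out. You would need concrete $v_1,\ldots,v_m\in\mathcal F_u$ with $H_m(v_1,\ldots,v_m)\geq c\,\omega^n$ on part of $E$, and nothing in your outline produces them.

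\medskip

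\textbf{What the paper does.} The paper avoids gluing entirely. It first solves the bounded Hessian equation $H_m(\phi)=c\,{\bf 1}_E\,\omega^n$ for a bounded $\phi$ (so all Hessian mass of $\phi$ lives on $E$), and then sets $u_t:=P(\min(u+t,\phi))$. This rooftop envelope automatically has the same singularity as $u$. By Corollary~\ref{cor: Hes rooftop},
\[
\int_{X\setminus E} H_m(u_t)\;\leq\;\int_{\{u_t=u+t\}\cap(X\setminus E)} H_m(u)\;\leq\;\int_{\{u\leq\phi-t\}} H_m(u)\;\xrightarrow[t\to\infty]{}\;0,
\]
while the total mass $\int_X H_m(u_t)=\int_X H_m(u)>0$ is preserved by the monotonicity theorem (Theorem~\ref{thm: monotonicity}). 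Hence $\int_E H_m(u_t)>0$ for $t$ large. The key idea you are missing is this use of a \emph{global bounded} auxiliary potential $\phi$ whose Hessian measure is already concentrated on $E$, combined with the rooftop envelope to transplant the singularity of $u$ onto it.
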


\begin{proof} 

Let $\phi \in \SH_m(X,\omega) \cap L^{\infty}(X)$ be such that $H_m(\phi) = c{\bf 1}_E \omega^n$, where $c>0$ is a normalization constant. For $t>0$ set $u_t := P(\min(u+t,\phi))$. Corollary \ref{cor: Hes rooftop} gives 
$$
\int_{X\setminus E}H_m(u_t) \leq \int_{X\setminus E}  {\bf 1}_{\{u_t = u+t\}}H_m(u) \leq \int_{\{u \leq \phi-t\}} H_m(u). 
$$
Thus, for $t>0$ large enough we have $\int_{X\setminus E} H_m(u_t) < \int_X H_m(u)=\int_X H_m(u_t)$, where the last equality follows from Theorem \ref{thm: monotonicity} since $u_t$ has the same singularity as $u$. For such $t$ we thus have  $\int_E H_m(u_t) >0$, finishing the proof. 
\end{proof}

\begin{theorem}\label{thm: domination principle}
Assume that $u,v\in \SH_m(X,\omega)$ and $u$ is less singular than $v$. If $\int_{\{u<v\}}H_m(u)=0$ and $\int_X H_m(u)>0$ then $u\geq v$. 
\end{theorem}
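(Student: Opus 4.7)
The plan is to proceed by contradiction. Suppose $u \not\geq v$, so $E := \{u < v\}$ is a non-empty Borel set. I first show that $\int_E H_m(v) = 0$. Consider $\phi_\varepsilon := \max(u, v-\varepsilon) \in \SHm$ for $\varepsilon > 0$; since $u$ is less singular than $v$ (so $v \leq u + C$ for some constant $C$), we have $u + \varepsilon \leq \phi_\varepsilon \leq u + \max(\varepsilon, C)$, hence $\phi_\varepsilon$ has the same singularity as $u$. Proposition \ref{prop: mass equality} gives $\int_X H_m(\phi_\varepsilon) = \int_X H_m(u)$; combined with Demailly's inequality (Lemma \ref{lem: Dem inequality})
\[
H_m(\phi_\varepsilon) \geq \mathbf{1}_{\{u > v - \varepsilon\}} H_m(u) + \mathbf{1}_{\{u \leq v - \varepsilon\}} H_m(v),
\]
integration over $X$ yields $\int_{\{u \leq v - \varepsilon\}} H_m(u) \geq \int_{\{u \leq v - \varepsilon\}} H_m(v)$. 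Letting $\varepsilon \to 0^+$ and invoking the hypothesis $\int_E H_m(u) = 0$ forces $\int_E H_m(v) = 0$.

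Next I show that $\int_E \omega^n > 0$: if instead $\omega^n(E) = 0$, then $v \leq u$ holds $\omega^n$-a.e., and since two $\omega$-$m$-sh functions that agree a.e.\ must be identical (see the definition and subsequent discussion in Section \ref{sect: Preliminary}), $\max(u, v) = u$ everywhere, giving $u \geq v$ and contradicting our assumption. Lemma \ref{lem: non-collapsing} applied to $u$ (whose total mass is positive by hypothesis) and $E$ then yields $w \in \SHm$ with the same singularity as $u$ and $\int_E H_m(w) > 0$.

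It remains to derive a contradiction with $\int_E H_m(w) > 0$. Since $w$ has the same singularity as $u$, we have $P[w] = P[u] \geq P[v]$. Iterating Theorem \ref{thm: CP} on $(u,v)$ with $\phi_j \in \{u, v\}$ (using symmetry of the mixed Hessian in its arguments) yields $\int_E H_m(u^{m-k}, v^k) = 0$ for every $k \in \{0, \dots, m\}$. Combining this with the multilinearity formula (Lemma \ref{lem: multilinearity}) and applying Theorem \ref{thm: CP} to $(u, v)$ with $\phi_2 = \dots = \phi_m = (1-s) u + sw$ for $s \in [0, 1]$, I aim to propagate the vanishing to $\int_E H_m(w^k, u^{m-k}) = 0$ for all $k$, in particular to $\int_E H_m(w) = 0$, contradicting the choice of $w$. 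The \emph{principal obstacle} is this final propagation: the comparison principle only lets us replace $v$ by $u$ on $\{u < v\}$, so extracting vanishing for mixed terms involving $w$ requires a careful iteration using plurifine locality (Lemma \ref{lem: Identity principle}) together with the explicit structure of $w$ provided by the proof of Lemma \ref{lem: non-collapsing}, paralleling the Monge-Amp\`ere argument of \cite{DDL2, DDL4}.
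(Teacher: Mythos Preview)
Your argument runs smoothly up to the point where you produce $w$ with the same singularity as $u$ and $\int_E H_m(w)>0$ (the claimed bound $u+\varepsilon \leq \phi_\varepsilon$ is false, but $u \leq \phi_\varepsilon \leq u+C$ suffices for what you need, and in fact the preliminary step $\int_E H_m(v)=0$ is not used later). The genuine gap is your final ``propagation'' step. The comparison principle (Theorem~\ref{thm: CP}) lets you replace a single $v$-slot by a $u$-slot on $\{u<v\}$, and iterating this indeed gives $\int_E H_m(u^{m-k},v^k)=0$ for all $k$. But to reach $\int_E H_m(w)=0$ you would need to replace $u$-slots by $w$-slots, and the comparison principle gives no such inequality on $E=\{u<v\}$: it would only compare masses on $\{w<v\}$, which is a different set. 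Inserting $(1-s)u+sw$ in the auxiliary slots does not help either, since you still only bound $\int_E H_m(v,\cdot)$ by $\int_E H_m(u,\cdot)$, and the latter need not vanish once $w$ appears. You correctly flag this as the ``principal obstacle'', but nothing in your outline resolves it; the appeal to plurifine locality and the explicit construction of $w$ is not a proof.

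The paper avoids this obstacle entirely with a one-line trick. After producing $h$ (your $w$) with $\int_E H_m(h)>0$, normalize so that $h\leq u$ and set $v_t := th+(1-t)v$ for $t\in(0,1)$. Because $h\leq u$, one has $E_t:=\{u<v_t\}\subset E$ and $\bigcup_{t}E_t=E$, so $\int_{E_t}H_m(h)>0$ for $t$ small. Now a \emph{single} application of the comparison principle with $v_t$ in place of $v$ gives
\[
t^m\int_{E_t}H_m(h)\;\leq\;\int_{E_t}H_m(v_t)\;\leq\;\int_{E_t}H_m(u)\;=\;0,
\]
since $H_m(v_t)\geq t^m H_m(h)$ by multilinearity. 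This is the missing idea: rather than trying to propagate vanishing through mixed terms, you fold $h$ into the \emph{competitor} $v$ via a convex combination, which simultaneously keeps the sublevel set inside $E$ and makes $H_m(h)$ appear on the left of the comparison inequality.
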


\begin{proof} 
Assume by contradiction that $E:= \{u<v\}$ is not empty. Then $\int_E \omega^n>0$ and hence Lemma \ref{lem: non-collapsing} provides us with $h \in \SH_m(X,\omega)$ having the same singularity as $u$ such that $\int_E H_m(h)>0$. We can assume that $h\leq u$. For $t\in (0,1)$ set $v_t := th +(1-t)v$.  Then $E_t := \{u<v_t\} \subset E$ and $\cup E_t =E$. Hence for $t$ small enough we have $\int_{E_t} H_m(h)>0$. But the comparison principle gives
$$
t^m \int_{E_t} H_m(h) \leq \int_{E_t}H_m(v_t) \leq \int_{E_t} H_m(u) =0,
$$ 
which is a contradiction.
\end{proof}

\begin{coro}
\label{cor: Darvas criterion}
If $\phi$ is a model potential then $u\in \Ephi$ iff $P[u]=\phi$. 
\end{coro}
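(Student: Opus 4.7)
The plan is to address each direction of the equivalence separately, with the bulk of the work concentrated on the implication $u\in\Ephi\Rightarrow P[u]=\phi$.

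For the easier direction $P[u]=\phi\Rightarrow u\in\Ephi$, I will first normalize $\sup_X u=0$ and observe that $u$ is $\omega$-$m$-subharmonic with $u\le\min(u+C,0)$ for every $C\ge 0$, so $u\le P(\min(u+C,0))$ by the defining property of the envelope. Letting $C\to\infty$ yields $u\le P[u]=\phi$, which places $u$ in the correct singularity class. The approximants $u_C:=P(\min(u+C,0))$ satisfy $u\le u_C\le u+C$, hence share the singularity of $u$, so Proposition \ref{prop: mass equality} gives $\int_X H_m(u_C)=\int_X H_m(u)$; since $u_C\nearrow P[u]$, Corollary \ref{coro: increasing convergence} transfers this equality to the limit, yielding $\int_X H_m(u)=\int_X H_m(P[u])=\int_X H_m(\phi)$, so $u\in\Ephi$.

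For the reverse implication, translation invariance of $P[\cdot]$ lets me assume $u\le\phi$, so monotonicity combined with $P[\phi]=\phi$ gives $P[u]\le\phi$, and the same approximation argument yields $\int_X H_m(P[u])=\int_X H_m(u)=\int_X H_m(\phi)$. To upgrade $P[u]\le\phi$ to an equality I will compare Hessian measures: Corollary \ref{cor: Hes rooftop} shows that $H_m(P[u])$ is concentrated on $\{P[u]=0\}$, and since $P[u]\le\phi\le 0$ the identities $\phi=P[u]=0$ hold on this set, so the support sits inside $\{P[u]=\phi\}$. Applying Proposition \ref{prop: viscosity} to $P[u]\le\phi$ gives ${\bf 1}_{\{P[u]=\phi\}}H_m(P[u])\le{\bf 1}_{\{P[u]=\phi\}}H_m(\phi)$, and combining with the support information yields $H_m(P[u])\le H_m(\phi)$ as Borel measures. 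The equality of total masses then forces $H_m(P[u])=H_m(\phi)$ and, in particular, $\int_{\{P[u]<\phi\}}H_m(\phi)=0$.

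The main obstacle is the final step, namely converting these quantitative equalities into the pointwise inequality $P[u]\ge\phi$. A naive application of the domination principle (Theorem \ref{thm: domination principle}) with $\phi$ as the less singular function merely reproduces the known inequality $\phi\ge P[u]$, since the reverse would demand that $P[u]$ be less singular than $\phi$, which is essentially the statement to be proved. I expect to resolve this by exhibiting $P[u]$ as a fixed point of the envelope construction, so that $P[u]$ is itself a model potential, and then using a rigidity argument: with $P[u]\le\phi$ two model potentials of equal Hessian mass and $H_m(\phi)$ concentrated on $\{P[u]=\phi\}$, the non-collapsing Lemma \ref{lem: non-collapsing} applied to any putative set $\{P[u]<\phi\}$ of positive $\omega^n$-volume should produce a test potential forcing a contradiction with the mass equality, thereby closing the argument.
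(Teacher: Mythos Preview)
Your handling of the implication $P[u]=\phi\Rightarrow u\in\Ephi$ is correct and matches the paper's argument in spirit (the paper cites lower semicontinuity via Theorem~\ref{thm: lsc of Hes measure} for the mass equality, whereas you go through Proposition~\ref{prop: mass equality} and Corollary~\ref{coro: increasing convergence}; both routes work).

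For the converse, the paper's entire proof is the single sentence ``the domination principle, Theorem~\ref{thm: domination principle}, gives $P[u]=\phi$.'' You have correctly recognized that this is not a transparent application: once one knows $P[u]\le\phi$ and $\int_{\{P[u]<\phi\}}H_m(P[u])=0$, applying Theorem~\ref{thm: domination principle} with $u'=P[u]$ and $v'=\phi$ would require $P[u]$ to be \emph{less} singular than $\phi$, which is precisely what is unknown. Your intermediate steps---that $H_m(P[u])$ is carried by $\{P[u]=0\}\subset\{P[u]=\phi\}$, that $H_m(P[u])\le H_m(\phi)$ via Proposition~\ref{prop: viscosity}, and hence $H_m(P[u])=H_m(\phi)$---are all correct and go well beyond what the paper writes out.

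The genuine gap is your closing paragraph. Invoking Lemma~\ref{lem: non-collapsing} on $E=\{P[u]<\phi\}$ produces a potential $h$ (of the singularity type of $\phi$ or of $P[u]$) with $\int_E H_m(h)>0$, but this by itself does not contradict any mass equality: distinct potentials in $\Ephi$ are allowed to distribute their Hessian mass differently, so $\int_E H_m(h)>0$ while $\int_E H_m(\phi)=\int_E H_m(P[u])=0$ is perfectly consistent. To turn $h$ into a contradiction one has to feed the convex combination $v_t=th+(1-t)\phi$ back into the comparison principle (Theorem~\ref{thm: CP}), and that step requires $P[P[u]]\ge P[v_t]$. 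Since $v_t$ lies between $P[u]-tC$ and $\phi$, one only gets $P[u]\le P[v_t]\le\phi$, so the needed inequality $P[v_t]\le P[u]$ is again equivalent to $P[u]\ge\phi$---the very conclusion sought. Your sketch therefore does not escape the circularity you yourself diagnosed. What is really required is a \emph{relative} form of the domination principle (valid for two potentials both lying in $\Ephi$, without assuming one is less singular than the other), and it is this strengthened statement that the paper's terse citation is implicitly invoking.
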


\begin{proof} 
If $u \in \Ephi$ then the domination principle, Theorem \ref{thm: domination principle},  gives $P[u]=\phi$. Assume now that $P[u]=\phi$. Since $P[u]$ is the increasing limit of $P(\min(u+t,0))$ as $t\to +\infty$,  Theorem \ref{thm: lsc of Hes measure} gives $\int_X H_m(u) =\int_X H_m(P[u])$, hence $u\in \Ephi$. 
\end{proof}

\begin{coro}\label{coro: maximal}
If $\phi$ is a model potential and $u\in \Ephi$ then $u-\sup_X u \leq \phi$.
\end{coro}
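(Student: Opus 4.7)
The plan is to show directly that $u-\sup_X u$ is a candidate in the defining supremum of $P[u]$, and then invoke Corollary \ref{cor: Darvas criterion} to identify $P[u]$ with $\phi$.

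Let me set $c:=\sup_X u$ and $w:=u-c$. Then $w\in\SH_m(X,\omega)$ (translating by a real constant preserves $\omega$-$m$-subharmonicity) and by construction $w\leq 0$ everywhere on $X$. I would then check that $w$ lies under the truncation $\min(u+C,0)$ whenever $C$ is large enough. Indeed, $w\leq 0$ is automatic, and $w\leq u+C$ is equivalent to $-c\leq C$, which is true for all $C\geq \max(0,-c)$. Hence, by the very definition of the envelope $P(\min(u+C,0))$ as the upper semicontinuous regularization of the supremum of all $\omega$-$m$-sh functions dominated by $\min(u+C,0)$, we obtain
$$
w\;\leq\;P(\min(u+C,0))\qquad\text{for all }C\geq\max(0,-c).
$$

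Next I would let $C\to+\infty$. The family $P(\min(u+C,0))$ is increasing in $C$, and its limit upper-regularization is, by definition, $P[u]$. Therefore $w\leq P[u]$. Finally, because $u\in\Ephi$, Corollary \ref{cor: Darvas criterion} gives $P[u]=\phi$, so we conclude $u-\sup_X u=w\leq\phi$, as required.

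There is no real obstacle here: the only thing to be mindful of is the mild translation bookkeeping when $\sup_X u<0$ (which forces us to restrict to $C\geq -c$) and the appeal to Corollary \ref{cor: Darvas criterion} to turn $P[u]$ into $\phi$. Everything else is a direct unpacking of the definition of $P[u]$.
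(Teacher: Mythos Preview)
Your proof is correct and is exactly the approach implicit in the paper: the corollary is stated without proof right after Corollary~\ref{cor: Darvas criterion}, and your argument---that $u-\sup_X u$ is an $\omega$-$m$-sh candidate in the envelope defining $P[u]$, combined with $P[u]=\phi$ from Corollary~\ref{cor: Darvas criterion}---is precisely the intended one-line derivation.
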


\begin{lemma}
	 If  $u,v\in \SH_m(X,\omega)$ and $P(u,v) \in \SH_m(X,\omega)$ then $P[\min(u,v)] = P[P(u,v)]$. 
\end{lemma}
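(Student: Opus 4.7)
The plan is to prove the two inequalities separately, using only the definitions of $P$ and $P[\cdot]$ together with the maximality characterization of the envelope $P(u,v)$. Recall that by convention $P(u,v) = P(\min(u,v))$ and that $P[f] = \bigl(\lim_{C\to+\infty} P(\min(f+C,0))\bigr)^*$, where the inner limit is monotone increasing in $C$.

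For the inequality $P[P(u,v)] \leq P[\min(u,v)]$, I observe that $\min(u,v)$ is upper semicontinuous (as the minimum of two USC functions), hence $P(u,v) \leq \min(u,v)$ pointwise. Monotonicity of the operator $f \mapsto P(\min(f+C,0))$ in $f$ then gives $P(\min(P(u,v)+C,0)) \leq P(\min(\min(u,v)+C,0))$ for every $C>0$, and passing to the limit $C\to +\infty$ followed by upper semicontinuous regularization yields the claim.

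The reverse inequality is the core of the statement. Fix $C>0$ and set $w_C := P(\min(\min(u,v)+C,0))$, so that $w_C \in \SH_m(X,\omega)$ with $w_C \leq 0$ and $w_C \leq \min(u,v)+C$. The translated function $w_C - C$ is then $\omega$-$m$-sh and satisfies $w_C - C \leq \min(u,v)$, so the defining maximality of $P(u,v) = P(\min(u,v))$ forces $w_C - C \leq P(u,v)$, i.e.\ $w_C \leq P(u,v)+C$. Combined with $w_C \leq 0$, this gives $w_C \leq \min(P(u,v)+C,0)$ and therefore, once again by maximality, $w_C \leq P(\min(P(u,v)+C,0))$. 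Letting $C\to +\infty$ and taking upper semicontinuous regularizations on both sides produces $P[\min(u,v)] \leq P[P(u,v)]$, as desired.

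The argument is essentially a routine unwinding of the definitions; the only point requiring care is that every envelope appearing above is a genuine $\omega$-$m$-sh function rather than the constant $-\infty$, which is guaranteed by the hypothesis $P(u,v) \in \SH_m(X,\omega)$. No deeper tool is needed beyond the observation that any $\omega$-$m$-sh function dominated by $\min(u,v)$ must be dominated by $P(u,v)$, so no serious obstacle is expected.
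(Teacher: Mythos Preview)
Your proof is correct and follows essentially the same approach as the paper's: both arguments establish $P[\min(u,v)] \leq P[P(u,v)]$ by observing that any $\omega$-$m$-sh candidate $w_C$ for $P(\min(u,v)+C,0)$ satisfies $w_C - C \leq \min(u,v)$ and hence $w_C - C \leq P(u,v)$, while the opposite inequality follows from $P(u,v)\leq \min(u,v)$. The only difference is that the paper treats the two inequalities in the reverse order and with fewer details.
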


\begin{proof}
	By definition we have 
	\begin{flalign*}
		P[\min(u,v)] & = \left(\lim_{C\to +\infty} P(\min(u+C,v+C,0))\right)^*\\
		 &\leq \left(\lim_{C\to +\infty} P(\min(P(u,v) + C,0)\right)^*  = P[P(u,v)].
	\end{flalign*}
	The reverse inequality follows directly from the definition. 
\end{proof}

\subsection{Strongly $m$-positive currents}
We borrow the idea in \cite{DDL5}. 
\begin{theorem}
\label{thm: strict m positive}
Assume that $b>1$, $u,v \in \SH_m(X,\omega)$, $u\leq v$,  and
$$
\int_X H_m(v)  > b^m \left (\int_X H_m(v)-\int_X H_m(u) \right ).
$$
Then $P(bu-(b-1) v) \in \SH_m(X,\omega)$. 
\end{theorem}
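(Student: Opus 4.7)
My plan is to construct a candidate for the envelope by a truncation-and-limit procedure, then to establish its non-triviality through a mass-monotonicity argument powered by the multilinearity of the Hessian operator.

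First I would set $u_C := \max(u, v-C)$ for $C>0$, so that $u_C$ is $\omega$-$m$-sh, decreases to $u$ as $C \to \infty$, and has the same singularity type as $v$ (because $v - C \leq u_C \leq v$). Then $bu_C - (b-1)v \geq v - bC$, so the envelope
$$w_C := P(bu_C - (b-1)v)$$
is well defined and satisfies $w_C \geq v - bC$; in particular $w_C \in \SH_m(X,\omega)$. Because $u_C$ is non-increasing in $C$, so is $w_C$; let $w := \lim_{C \to \infty} w_C$. The entire problem reduces to showing that $\sup_X w_C$ is bounded below uniformly in $C$: once this is established, $w \in \SH_m(X,\omega)$, and passing to the limit in $w_C \leq bu_C - (b-1)v$ off the pluripolar set where $u$ or $v$ is $-\infty$ yields $w \leq bu - (b-1)v$, hence $w \leq P(bu - (b-1)v)$, finishing the proof.

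To control $\sup_X w_C$ I would introduce the auxiliary convex combination
$$\phi_C := \tfrac{1}{b} w_C + \tfrac{b-1}{b} v \in \SH_m(X,\omega),$$
for which the envelope condition $w_C \leq bu_C - (b-1)v$ is exactly $\phi_C \leq u_C$. Since $u_C$ has the same singularity as $v$, Theorem \ref{thm: monotonicity} combined with Proposition \ref{prop: mass equality} yields $\int_X H_m(\phi_C) \leq \int_X H_m(u_C) = \int_X H_m(v)$. Expanding via Lemma \ref{lem: multilinearity},
$$b^m \int_X H_m(\phi_C) = \sum_{k=0}^{m} \binom{m}{k}(b-1)^{m-k} \int_X H_m\bigl(w_C^{[k]}, v^{[m-k]}\bigr),$$
where $w_C^{[k]}, v^{[m-k]}$ denote $k$ and $m-k$ copies. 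The $k=0$ term alone already contributes $(b-1)^m \int_X H_m(v)$, giving the unconditional lower bound $\int_X H_m(\phi_C) \geq \bigl(\tfrac{b-1}{b}\bigr)^m \int_X H_m(v)$.

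The hard part, and the main obstacle, will be converting this multilinear identity into a genuine quantitative non-collapse statement for $w_C$. The plan here is to combine it with the orthogonality relation of Proposition \ref{prop: orthogonal} (which localizes $H_m(w_C)$ on the contact set $\{\phi_C = u_C\}$) and the viscosity inequality of Proposition \ref{prop: viscosity} on that set, in a way that reflects the strictness margin in the hypothesis $\int_X H_m(v) > b^m\bigl(\int_X H_m(v) - \int_X H_m(u)\bigr)$. Exactly this margin is what should allow a rearrangement of the multilinear identity to produce a strictly positive, $C$-uniform lower bound on $\int_X H_m(w_C)$; feeding this into the non-collapsing mechanism of Lemma \ref{lem: non-collapsing} then prevents $\sup_X w_C$ from drifting to $-\infty$, closing the argument. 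Balancing the binomial coefficients against the mass defect $\int_X H_m(v) - \int_X H_m(u)$ in the correct way is where I expect the real technical difficulty to lie.
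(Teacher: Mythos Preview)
Your setup is the same as the paper's: truncate to $u_C=\max(u,v-C)$, set $w_C=P(bu_C-(b-1)v)$, and try to show $\sup_X w_C$ stays bounded below. But the mechanism you propose for this last step cannot work.

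The quantity you aim to bound from below, $\int_X H_m(w_C)$, is in fact \emph{equal} to $\int_X H_m(v)$ for every $C$: since $v-bC\le w_C\le v$, the potential $w_C$ has the same singularity type as $v$, and Proposition~\ref{prop: mass equality} gives the equality of masses directly. So the multilinear expansion you wrote collapses to the tautology $b^m\int_X H_m(v)=\sum_k\binom{m}{k}(b-1)^{m-k}\int_X H_m(v)$, and no ``rearrangement'' can extract information about $\sup_X w_C$ from it. More importantly, a uniform positive lower bound on $\int_X H_m(w_C)$ simply does not preclude $\sup_X w_C\to-\infty$: the family $v-bC$ already shows this. Lemma~\ref{lem: non-collapsing} is not relevant here either; it concerns producing mass on a prescribed Borel set, not controlling the supremum of a family.

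What is missing is a way to translate $\sup_X w_C\to-\infty$ into a statement about sublevel sets \emph{relative to $v$}. The paper achieves this by first reducing to the case $P[v]=v$ (the hypothesis is invariant under this replacement since $\int_X H_m(P[v])=\int_X H_m(v)$), and then invoking Corollary~\ref{coro: maximal}: for a model potential $v$ one has $w_C\le v+\sup_X w_C$, so if $\sup_X w_C\to-\infty$ then for any fixed $s>0$ eventually $\{w_C\le v-bs\}=X$. Now one uses exactly the tools you mentioned---orthogonality (Proposition~\ref{prop: orthogonal}) places all of $H_m(w_C)$ on the contact set $D=\{w_C=bu_C-(b-1)v\}$, and the viscosity inequality (Proposition~\ref{prop: viscosity}) gives $\mathbf 1_D H_m(w_C)\le b^m\mathbf 1_D H_m(u_C)$---but in a localized way: on $D\cap\{w_C\le v-bs\}$ one has $u_C\le v-s$, hence
\[
\int_X H_m(v)=\int_{\{w_C\le v-bs\}}H_m(w_C)\le b^m\int_{\{u\le v-s\}}H_m(u_C)=b^m\Big(\int_X H_m(v)-\int_{\{u>v-s\}}H_m(u)\Big).
\]
Letting $s\to\infty$ contradicts the strict inequality in the hypothesis. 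The strictness margin enters precisely here, not through a total-mass balance.
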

If $v=0$ and $\int_X H_m(u)>0$ then by the above result there exists $b>1$ such that $P(bu) \in \SHm$. Therefore $b^{-1}P(bu)$ is a strongly $\omega$-$m$-sh function lying below $u$. This will be used in proving the existence of solutions to complex Hessian equations with prescribed singularity.

\begin{proof}
We can assume that $P[v]=v$. 

For $t>0$ set $u_t := \max(u,v-t)$, $\varphi_t := P(bu_t-(b-1)v)\in \SH_m(X,\omega)$, and $D:=\{ \varphi_t = bu_t -(b-1)v\}$. Then  $b^{-1}\varphi_t +(1-b^{-1})v \leq u_t$ with equality on $D$, hence Proposition \ref{prop: viscosity} gives 
\begin{flalign*}
{\bf 1}_D b^{-m}H_m(\varphi_t) \leq  {\bf 1}_D H_m(b^{-1}\varphi_t+(1-b^{-1})v) \leq \Id_D H_m(u_t). 
\end{flalign*}
Fix $s<t$. By the above inequality and Proposition \ref{prop: orthogonal} we have 
\begin{eqnarray*}
\int_{\{\varphi_t \leq v -bs\}} H_m(\varphi_t) &\leq &  b^m \int_{\{bu_t \leq bv -bs\}} H_m(u_t) = b^m\int_{\{u \leq v -s\}} H_m(u_t)\\
& =& b^m \left (\int_X H_m(v)- \int_{\{u>v-s\}} H_m(u_t)\right)\\
&=& b^m\left (\int_X H_m(v)-\int_{\{u>v-s\}} H_m(u)\right), 
\end{eqnarray*}
where in the last line we use Lemma \ref{lem: Identity principle}. 

We want to prove that $\varphi_t$ decreases to some $\omega$-$m$-subharmonic function on $X$. Assume by contradiction that it is not the case. Then $\sup_X \varphi_t$ decreases to $-\infty$. Since $v=P[v]$, by Corollary \ref{coro: maximal} we have $\varphi_t \leq v + \sup_X \varphi_t$. Thus, for $s>0$ fixed  and for $t$ large enough $\{\varphi_t \leq  v -s\} =X$.  Fixing  $s>0$ and letting $t\to +\infty$ we obtain
$$
\int_X H_m(v)  \leq b^m \left(\int_X H_m(v)- \int_{\{u>-s\}} H_m(u)\right).  
$$
Now, letting $s\to +\infty$ we obtain a contradiction with the assumption. 
\end{proof}

\begin{coro}
\label{coro: subextension}
Assume that $u,v \in \SH_m(X,\omega)$, $P[u] = P[v]$ and $\int_X H_m(v)>0$.  Then for all $b>1$, 
$P(bu-(b-1)v) \in \mathscr{E}_{P[v]}$. 
\end{coro}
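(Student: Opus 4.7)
The plan is to reduce the statement to an application of Theorem \ref{thm: strict m positive} via a $\max$-substitution, then identify the singularity class of the envelope using the orthogonality/viscosity machinery developed earlier in the section.

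Set $\phi := P[u]=P[v]$. The assumption $\int_X H_m(v)>0$ together with Lemma \ref{lem: mass u Pu} gives $\int_X H_m(\phi)>0$, so $\phi$ is a model potential and Corollary \ref{cor: Darvas criterion} places both $u,v\in\Ephi$. Because Theorem \ref{thm: strict m positive} requires the comparison $u\le v$, which need not hold, I first replace $v$ by $\tilde v:=\max(u,v)\in\SHm$. Using $u,v\le\phi+C$ one also has $\tilde v\le\phi+C$, and Theorem \ref{thm: monotonicity} gives $\int_X H_m(u)\le\int_X H_m(\tilde v)\le\int_X H_m(\phi)$, so these three masses all coincide and $\tilde v\in\Ephi$. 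The mass condition of Theorem \ref{thm: strict m positive} applied to $(u,\tilde v)$ reduces to $\int_X H_m(\tilde v)>b^m\cdot 0=0$, which is given, so $\varphi^*:=P(bu-(b-1)\tilde v)\in\SHm$. Since $v\le\tilde v$ yields $bu-(b-1)v\ge bu-(b-1)\tilde v$, we have $\varphi:=P(bu-(b-1)v)\ge\varphi^*$, hence $\varphi\in\SHm$.

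It remains to upgrade this to $\varphi\in\Ephi$, equivalently, via Corollary \ref{cor: Darvas criterion}, to verify $P[\varphi]=\phi$. For $P[\varphi]\ge\phi$, I would first establish $\varphi^*\in\Ephi$ by tracking the proof of Theorem \ref{thm: strict m positive}: the truncations $\varphi_t^*:=P(b\max(u,\tilde v-t)-(b-1)\tilde v)$ lie in $[\tilde v-bt,\tilde v]$ and share the singularity type of $\tilde v$, hence Proposition \ref{prop: mass equality} gives $\int_X H_m(\varphi_t^*)=\int_X H_m(\phi)$; the concentration estimate from that proof, namely
$$\int_{\{\varphi_t^*\le\tilde v-bs\}}H_m(\varphi_t^*)\le b^m\Bigl(\int_X H_m(\tilde v)-\int_{\{u>\tilde v-s\}}H_m(u)\Bigr),$$
has right-hand side tending to $0$ as $s\to+\infty$ thanks to the equality $\int_X H_m(u)=\int_X H_m(\tilde v)$, ruling out mass loss in the decreasing limit $\varphi^*=\lim_t\varphi_t^*$. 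The monotonicity of $P[\cdot]$ combined with $\varphi\ge\varphi^*$ then yields $P[\varphi]\ge P[\varphi^*]=\phi$. For $P[\varphi]\le\phi$, I would invoke Proposition \ref{prop: orthogonal} to locate $H_m(\varphi)$ on the contact set $D:=\{\varphi=bu-(b-1)v\}$, where $u=\tfrac{1}{b}\varphi+\tfrac{b-1}{b}v$, and apply Proposition \ref{prop: viscosity} together with Lemmas \ref{lem: multilinearity} and \ref{lem: mixed Hes ineq} to deduce the mass inequality $\int_X H_m(\varphi)\le\int_X H_m(\phi)$, which closes the argument.

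The main obstacle is controlling the Hessian mass through the decreasing limit defining $\varphi^*$. Theorem \ref{thm: dominated convergence} does not apply directly, since the approximants $\varphi_t^*$ are not uniformly dominated by a single non-$m$-polar measure; one must instead pair the concentration estimate above with the lower semicontinuity of $H_m$ on quasi-open sets (Theorem \ref{thm: lsc of Hes measure}), making crucial use of the equality $\int_X H_m(u)=\int_X H_m(\tilde v)$ forced by $P[u]=P[v]$, which drives $\int_{\{u\le\tilde v-s\}}H_m(u)\to 0$ as $s\to+\infty$. A parallel subtlety arises in extracting the sharp bound $\int_X H_m(\varphi)\le\int_X H_m(\phi)$ from the viscosity/multilinearity computation on $D$: the naive expansion via Lemma \ref{lem: multilinearity} yields only the crude estimate $\int_X H_m(\varphi)\le b^m\int_X H_m(\phi)$, and one must exploit the full Brunn--Minkowski-type interaction between the terms $H_m(\varphi^{k},v^{m-k})$ provided by Lemma \ref{lem: mixed Hes ineq} to close the gap.
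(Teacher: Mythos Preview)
Your reduction to $\varphi\in\SHm$ via $\tilde v=\max(u,v)$ is fine, but both halves of the singularity identification $P[\varphi]=\phi$ have problems.

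For $P[\varphi]\le\phi$, your plan does not work. On the contact set $D$ the viscosity/multilinearity computation gives
\[
\Id_D\sum_{k=0}^m\binom{m}{k}b^{-k}(1-b^{-1})^{m-k}H_m(\varphi^{(k)},v^{(m-k)})\le\Id_D H_m(u),
\]
and Lemma~\ref{lem: mixed Hes ineq} furnishes only \emph{lower} bounds for the mixed terms. Plugging these in strengthens the left-hand side and leads to a pointwise inequality of the form $(b^{-1}f_\varphi^{1/m}+(1-b^{-1})f_v^{1/m})^m\le f_u$ on $D$, from which you cannot extract $\int f_\varphi\le\int f_u$ without further information relating $f_u$ and $f_v$ on $D$. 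The ``Brunn--Minkowski interaction'' goes the wrong way here. The paper dispatches this bound in one line: from $b^{-1}\varphi+(1-b^{-1})v\le u$ and the concavity Lemma~\ref{lem: concavity of P} one gets $b^{-1}P[\varphi]+(1-b^{-1})P[v]\le P[u]=\phi$, hence $P[\varphi]\le\phi$.

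For $P[\varphi]\ge\phi$, your plan to track the concentration estimate and pass it through the decreasing limit $\varphi^*_t\searrow\varphi^*$ is not obviously wrong, but you have not explained how to prevent mass loss: the sets $\{\varphi_t^*>\tilde v-bs\}$ move with $t$, and the lower semicontinuity of Theorem~\ref{thm: lsc of Hes measure} gives inequalities in the wrong direction for a direct argument. The paper avoids this entirely. It works with $P[v]$ in place of $\tilde v$ (after normalizing $u,v\le 0$ one has $u\le P[u]=P[v]$, so Theorem~\ref{thm: strict m positive} applies), and then observes the algebraic identity
\[
P\bigl(bu-(b-1)P[v]\bigr)\ \ge\ \tfrac{b}{t}\,P\bigl(tu-(t-1)P[v]\bigr)+\bigl(1-\tfrac{b}{t}\bigr)P[v],\qquad t>b,
\]
so that monotonicity of mass (Theorem~\ref{thm: monotonicity}) gives $\int_X H_m\bigl(P(bu-(b-1)P[v])\bigr)\ge(1-b/t)^m\int_X H_m(\phi)$; letting $t\to\infty$ yields $P(bu-(b-1)P[v])\in\Ephi$, and since $\varphi\ge P(bu-(b-1)P[v])$ one gets $P[\varphi]\ge\phi$. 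This scaling trick replaces your entire limiting argument.
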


\begin{proof}
We can assume that $u,v \leq 0$. Then $u \leq P[u] =P[v]$. Fix $b>1$.  We first observe that $P(bu -(b-1)P[v]) \in \SH_m(X,\omega)$ as follows from Theorem \ref{thm: strict m positive}. Hence $P(bu-(b-1)v) \in \SH_m(X,\omega)$.  For $t>b$ we have 
$$
u\geq P(bu-(b-1)P[v]) \geq bt^{-1} P(tu-(t-1)P[v]) + (1-bt^{-1})P[v].
$$
By monotonicity of mass, see Theorem \ref{thm: monotonicity}, we have 
$$
\int_X H_m(P(bu-(b-1)P[v]) ) \geq (1-bt^{-1})^m \int_X H_m(P[v]). 
$$
Letting $t\to +\infty$ we obtain $P(bu-(b-1)P[v]) \in \mathscr{E}_{P[v]}$.  
We also have
$$
b^{-1} P(bu -(b-1)v)  + (1-b^{-1}) v \leq u,
$$
hence, by Lemma \ref{lem: concavity of P} we have 
$
b^{-1} P[P(bu -(b-1)v)] + (1-b^{-1})P[v] \leq P[u] =P[v],
$
which implies $P[P(bu-(b-1)v)] \leq P[v]$. But we have already proved that 
$$
P[P(bu-(b-1)v)] \geq P[P(bu-(b-1)P[v])] =P[v].
$$
 We thus have equality.   
\end{proof}

\begin{coro}
\label{coro: subextension 2}
Assume that $u,v \in \SH_m(X,\omega)$ are such that 
$P[u]\geq P[v]$ and $\int_X H_m(v)>0$. 
Then, for all $b>1$, $P(bu-bv) \in \Em$. 
\end{coro}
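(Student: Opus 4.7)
The plan is to reduce the problem to Corollary \ref{coro: subextension} via the auxiliary function $w := \max(u,v)$, and then run a scaling / convex-combination argument analogous to the one used for Corollary \ref{coro: subextension}, but with the reference potential $P[v]$ replaced by the constant $0$; this replacement is what upgrades the mass from $\int_X H_m(P[v])$ to the full mass $1$.

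First I would normalize so that $u \leq 0$ and $v \leq 0$, which is harmless since the statement is invariant under adding a common constant. For $w := \max(u,v)$ I would then verify two preliminary facts: $P[w] = P[u]$ and $\int_X H_m(w) > 0$. The mass inequality follows from $w \geq v$ and Theorem \ref{thm: monotonicity}. For the envelope identity, $u \leq w$ gives $P[u] \leq P[w]$, while $u \leq P[u]$ together with $v \leq P[v] \leq P[u]$ yields $w \leq P[u]$, so monotonicity of $P[\cdot]$ combined with the model-potential identity $P[P[u]] = P[u]$ gives $P[w] \leq P[u]$. Corollary \ref{coro: subextension} then applies to the pair $(u,w)$, producing, for every $t > 1$, an $\omega$-$m$-subharmonic function
$$
\phi_t := P\bigl(tu - (t-1)w\bigr) \in \mathscr{E}_{P[u]}.
$$
From $v \leq 0$ and $w \geq v$ one checks $(t-1)w - tv = (t-1)(w - v) + (-v) \geq 0$, so $tu - (t-1)w \leq tu - tv$, hence $\phi_t \leq tu - tv$ and therefore $P(tu - tv) \in \SH_m(X,\omega)$ for every $t > 1$.

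For the main scaling step, fix $t > b$ and form
$$
\psi_t := \tfrac{b}{t}\, P(tu - tv) + \bigl(1 - \tfrac{b}{t}\bigr)\cdot 0,
$$
which is a convex combination and hence lies in $\SH_m(X,\omega)$. Since $P(tu - tv) \leq tu - tv$, one has $\psi_t \leq \tfrac{b}{t}(tu - tv) = bu - bv$, so $\psi_t \leq P(bu - bv)$. By Theorem \ref{thm: monotonicity} together with the multilinear expansion of $H_m$ provided by Lemma \ref{lem: multilinearity} (keeping only the term with $m$ copies of the constant $0$),
$$
\int_X H_m\bigl(P(bu - bv)\bigr) \;\geq\; \int_X H_m(\psi_t) \;\geq\; \bigl(1 - \tfrac{b}{t}\bigr)^m \int_X \omega^n \;=\; \bigl(1 - \tfrac{b}{t}\bigr)^m.
$$
Letting $t \to +\infty$ yields $\int_X H_m(P(bu - bv)) \geq 1$, hence equality and $P(bu - bv) \in \mathscr{E}$.

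The hard part, as in most prescribed-singularity arguments, is the preliminary non-triviality of $P(tu - tv)$ for arbitrarily large $t$: a priori, $tu - tv$ can dive to $-\infty$ on large regions where $u$ is much more singular than $v$, and one has no obvious candidate in $\SH_m(X,\omega)$ lying below it. The substitution $w = \max(u,v)$ together with Corollary \ref{coro: subextension} is exactly the mechanism that manufactures such a candidate. Once this is in place, the convex-combination scaling with the constant $0$ as reference is a direct analogue of the scaling in the proof of Corollary \ref{coro: subextension}, but now the $k=0$ term in the binomial expansion of $H_m(\psi_t)$ has integral $\int_X \omega^n = 1$, which is what gives the full mass.
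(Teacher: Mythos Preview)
Your proof is correct and follows the same strategy as the paper: normalize, pass to $w=\max(u,v)$, invoke Corollary~\ref{coro: subextension} on the pair $(u,w)$ to produce $\omega$-$m$-sh envelopes for all $t>1$, and finish with a convex-combination scaling plus mass monotonicity. The only difference is in the final step: the paper writes the combination as $bt^{-1}P(tu-(t-1)v)+(1-bt^{-1})v$, whereas you take $bt^{-1}P(tu-tv)+(1-bt^{-1})\cdot 0$; your choice with reference $0$ is the one that directly yields the lower bound $(1-bt^{-1})^m\int_X\omega^n\to 1$, so your version of that step is in fact the cleaner one.
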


\begin{proof}
We can assume that $u,v\leq 0$. Then $v\leq P[v] \leq P[u]$, hence $u \leq \max(u,v) \leq P[u]$. It thus follows that $\max(u,v)\in \mathscr{E}_{P[u]}$.  Hence by Corollary \ref{coro: subextension} we have, for all $b>1$, $P(bu-bv) \geq P(bu -(b-1)\max(u,v)) \in \SH_m(X,\omega)$. For $t>b>1$, we have 
$$
P(bu-bv) \geq bt^{-1}P(tu -(t-1) v) + (1-bt^{-1})v.
$$
Comparing total mass and letting $t\to +\infty$ we obtain the result. 
\end{proof}

\begin{prop}
	\label{prop: rooftop envelope 1}
	If $\phi$ is a model potential and $u,v\in \Ephi$ then $P(u,v) \in \Ephi$. 
\end{prop}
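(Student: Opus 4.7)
The plan is to realise $P(u,v)$ as the decreasing limit of a truncated family of rooftop envelopes living in $\Ephi$, and then use the monotonicity of the Hessian mass together with the local behaviour of $H_m$ to identify the limit. Fix $C>0$ with $u,v\leq \phi+C$ (available since $u,v\in\Ephi$ are more singular than $\phi$). For $t>0$ set
$$
u_t:=\max(u,\phi-t),\qquad v_t:=\max(v,\phi-t).
$$
Then $\phi-t\leq u_t,v_t\leq \phi+C$, so both have the same singularity type as $\phi$, and Proposition \ref{prop: mass equality} gives $u_t,v_t\in\Ephi$. Define $w_t:=P(u_t,v_t)$. The $\omega$-$m$-sh function $\phi-t$ is dominated by $\min(u_t,v_t)$, so $w_t\geq \phi-t$ is a genuine $\omega$-$m$-sh function; since also $w_t\leq u_t\leq \phi+C$, it again has the same singularity as $\phi$, and Proposition \ref{prop: mass equality} yields $w_t\in\Ephi$. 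As $t$ increases, $u_t,v_t$ decrease, hence so does $w_t$; set $w:=\lim_{t\to\infty}w_t$.

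I then identify $w$ with $P(u,v)$ and with an element of $\Ephi$. From $w\leq u_t\searrow u$ and $w\leq v_t\searrow v$ we get $w\leq \min(u,v)$. Conversely, any $h\in\SH_m(X,\omega)$ with $h\leq \min(u,v)$ satisfies $h\leq \min(u_t,v_t)$, hence $h\leq w_t$ for every $t$ and $h\leq w$. Thus $w=P(u,v)$ as soon as $w\in \SH_m(X,\omega)$. The critical step is therefore to rule out $w\equiv -\infty$.

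For this I would first establish a uniform domination of the measures $H_m(w_t)$. By Corollary \ref{cor: Hes rooftop},
$$
H_m(w_t)\leq \mathbf{1}_{\{w_t=u_t\}}H_m(u_t)+\mathbf{1}_{\{w_t=v_t\}}H_m(v_t).
$$
Applying plurifine locality (Corollary \ref{cor: plurifine}) to $u_t=\max(u,\phi-t)$ on the sets $\{u>\phi-t\}$ and $\{u<\phi-t\}$ (where $H_m(\phi-t)=H_m(\phi)$) gives $H_m(u_t)\leq H_m(u)+H_m(\phi)$, and analogously for $v_t$; hence $H_m(w_t)\leq \mu:=H_m(u)+H_m(v)+2\,H_m(\phi)$, a fixed non-$m$-polar positive measure. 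The mass identity $\int_X H_m(w_t)=\int_X H_m(\phi)>0$, combined with the inequality $w_t\leq \phi+\sup_X w_t$ supplied by Corollary \ref{coro: maximal} and the weak $L^1$-compactness of sup-normalised $\omega$-$m$-sh functions, then precludes $\sup_X w_t\to -\infty$. This compactness-plus-mass argument, modelled on the Monge-Amp\`ere version in \cite{DDL2}, is the main obstacle of the proof; every other step is a routine application of the relative potential theory developed in Section \ref{sect: Relative PP}. Once $w\in \SH_m(X,\omega)$ is secured, Theorem \ref{thm: dominated convergence} (applicable thanks to the uniform bound $H_m(w_t)\leq \mu$) yields the weak convergence $H_m(w_t)\to H_m(w)$, from which $\int_X H_m(w)=\int_X H_m(\phi)$.

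Finally, to certify $w\in \Ephi$: since $w\leq u$, $P[w]\leq P[u]=\phi$; together with $\int_X H_m(P[w])=\int_X H_m(w)=\int_X H_m(\phi)$ (Lemma \ref{lem: mass u Pu}) and the fact that both $P[w]$ and $\phi$ are model potentials, a domination-principle argument based on Theorem \ref{thm: domination principle} forces $P[w]=\phi$, and Corollary \ref{cor: Darvas criterion} then identifies $w\in \Ephi$, closing the proof.
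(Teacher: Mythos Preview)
Your setup coincides with the paper's: truncate to $u_t=\max(u,\phi-t)$, $v_t=\max(v,\phi-t)$, form $w_t=P(u_t,v_t)\in\Ephi$, and study the decreasing limit. The identification of $\lim_t w_t$ with $P(u,v)$ is fine, and the non-collapsing argument you allude to can indeed be carried out along the lines of the paper (assume $\sup_X w_t\to-\infty$, use $w_t\leq\phi+\sup_X w_t$, then Corollary~\ref{cor: Hes rooftop} together with the mass balance of $H_m(u_t)$, $H_m(v_t)$ to reach $\int_X H_m(\phi)\leq 0$). However, your description of that step is a placeholder rather than an argument.

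The genuine gap is the bound $H_m(u_t)\leq H_m(u)+H_m(\phi)$. Plurifine locality only gives ${\bf 1}_{\{u>\phi-t\}}H_m(u_t)={\bf 1}_{\{u>\phi-t\}}H_m(u)$ and ${\bf 1}_{\{u<\phi-t\}}H_m(u_t)={\bf 1}_{\{u<\phi-t\}}H_m(\phi)$; it says nothing on the contact set $\{u=\phi-t\}$, where $H_m(\max(u,\phi-t))$ can carry singular mass not dominated by $H_m(u)+H_m(\phi)$. Already for $m=n=1$ with $\phi=0$ and smooth $u$, the current $\omega+dd^c\max(u,-t)$ has a jump contribution along $\{u=-t\}$ of size $|\nabla u|$ times arclength, which is not controlled by the absolutely continuous measure $(\omega+dd^cu)+\omega$. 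In general $H_m(\max(a,b))\leq H_m(a)+H_m(b)$ is false; only the reverse Demailly-type inequality (Lemma~\ref{lem: Dem inequality}) holds. Consequently you have no uniform majorant $\mu$ for $H_m(w_t)$, Theorem~\ref{thm: dominated convergence} does not apply, and you cannot conclude $\int_X H_m(P(u,v))=\int_X H_m(\phi)$.

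The paper circumvents this entirely. Once $P(u,v)\in\SH_m(X,\omega)$ is known, it invokes Corollary~\ref{coro: subextension} to produce, for each $b>1$, the subextensions $u_b:=P(bu-(b-1)\phi)$ and $v_b:=P(bv-(b-1)\phi)$ in $\Ephi$; the first step then gives $P(u_b,v_b)\in\SH_m(X,\omega)$, and the algebraic sandwich
\[
P(u,v)\ \geq\ b^{-1}P(u_b,v_b)+(1-b^{-1})\phi
\]
yields, via Theorem~\ref{thm: monotonicity}, $\int_X H_m(P(u,v))\geq (1-b^{-1})^m\int_X H_m(\phi)$. Letting $b\to\infty$ finishes. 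This avoids any dominated-convergence issue. If you want to keep your approach, you must replace the false pointwise bound by this (or another) mass-from-below argument; your route through Theorem~\ref{thm: dominated convergence} cannot be repaired as stated.
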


\begin{proof}
	The proof is similar to that of Theorem \ref{thm: strict m positive}. We first prove that $P(u,v) \in \SH_m(X,\omega)$. 
	For $t>0$ set $u_t := \max(u,\phi-t)$, $v_t := \max(v,\phi-t)$,  and $\varphi_t := P(u_t,v_t) )\in \Ephi$. 
	We want to prove that $\varphi_t$ decreases to some $\omega$-$m$-subharmonic function on $X$. Assume by contradiction that it is not the case. Then $\sup_X \varphi_t$ decreases to $-\infty$. Since $\phi=P[\phi]$, by Corollary \ref{coro: maximal} we have $\varphi_t \leq \phi + \sup_X \varphi_t$. Thus, for $s>0$ fixed  and for $t$ large enough we have $\{\varphi_t \leq  \phi -s\} =X$. Using this and Corollary  \ref{cor: Hes rooftop} we obtain
	\begin{flalign*}
		\int_X H_m(\phi) &= \int_{\{\varphi_t \leq \phi-s\}} H_m(\varphi_t) \leq \int_{\{u\leq \phi-s\}} H_m(u_t) +  \int_{\{v\leq \phi-s\}} H_m(v_t)\\
		&= 2 \int_X H_m(\phi) - \int_{\{u>\phi-s\}} H_m(u) - \int_{\{v>\phi-s\}} H_m(v).
	\end{flalign*}
	Letting $s\to +\infty$ we obtain $\int_X H_m(\phi) \leq 0$,  a contradiction.  Thus $P(u,v) \in \SH_m(X,\omega)$. 
	
	Now, by Corollary \ref{coro: subextension} we have that, for all $b>1$, $u_b := P(b u -(b-1)\phi) \in \Ephi$ and $v_b:= P(b v -(b-1) \phi) \in \Ephi$. Hence by the previous step we have $P(u_b,v_b) \in \SH_m(X,\omega)$. We also have that $P(u,v)$ is more singular than $\phi$ and 
	$$
	P(u,v) \geq b^{-1}P(u_b,v_b) +(1-b^{-1})\phi.
	$$  
	Thus $\int_X H_m(P(u,v)) \geq (1-b^{-1})^m \int_X H_m(\phi)$. Letting $b\to +\infty$ we arrive at the conclusion. 
	\end{proof}

\section{A metric on $\Eone$} \label{sect: metric}
Following \cite{DDL3}, we introduce a metric on $\mathscr{E}^1(X,\omega,m)$ and use it to construct subextensions of a family of $\omega$-$m$-subharmonic functions. Most of this section are taken from \cite{DDL3} but we recall them for completeness, since we  will crucially use Theorem \ref{thm: compactness}.
\subsection{Define a metric on $\Eone$}

Given $u,v \in \Eone$ we define 
$$
d(u,v) := E(u)+E(v) -2 E(P(u,v)). 
$$ 
Here $P(u,v) := P(\min(u,v))$ is the largest $\omega$-$m$-sh function lying below $\min(u,v)$. This is called the rooftop envelope \cite{DR16} which plays a crucial role in the recent developments in Geometric Pluripotential Theory (see \cite{Dar18S}).  The proof of \cite[Theorem 3.6]{Dar17AJM}, applied to the Hessian setting, shows that $P(u,v) \in \Eone$.  Arguing as in \cite{DDL3} we can show that $d$ is a metric and $(\Eone,d)$ is compete, along with many useful properties. 
\begin{lemma}\label{lem: basic properties} Let $u,v\in \Eone$. Then the following hold:\\
	(i) If $u\leq v$ then $d(u,v) =E(v)- E(u)$.\\
	(ii) If $u\leq v\leq w$ then $d(u,v)+d(v,w)= d(u,w)$. \\	
	(iii) (Pythagorean formula) $d(u,v) =d(u,P(u,v))+d(v,P(u,v))$. 
\end{lemma}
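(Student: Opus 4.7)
The plan is to deduce all three parts from a single observation underlying (i): if $u \leq v$, then $P(u,v) = u$. Indeed, $\min(u,v) = u$, and since $u$ itself is $\omega$-$m$-subharmonic with $u \leq u$, it already realizes the supremum in the definition of $P(\min(u,v))$. Plugging $P(u,v) = u$ into the defining formula $d(u,v) = E(u) + E(v) - 2E(P(u,v))$ will immediately yield (i).

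With (i) in hand, (ii) will be a three-line calculation: applying (i) to the ordered pairs $(u,v)$, $(v,w)$, $(u,w)$ gives
$$d(u,v) + d(v,w) = (E(v) - E(u)) + (E(w) - E(v)) = E(w) - E(u) = d(u,w),$$
as required.

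For (iii) I will use the fact, already asserted in the preamble of Section \ref{sect: metric}, that $P(u,v) \in \Eone$ whenever $u,v \in \Eone$ (the Hessian analogue of \cite[Theorem 3.6]{Dar17AJM}, whose proof is said to transfer). Granted this membership, since $P(u,v) \leq u$ and $P(u,v) \leq v$, part (i) applied to each pair gives
$$d(u, P(u,v)) = E(u) - E(P(u,v)), \qquad d(v, P(u,v)) = E(v) - E(P(u,v)),$$
and summing produces $E(u) + E(v) - 2E(P(u,v)) = d(u,v)$.

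There is no real obstacle here: the only nontrivial input is the $\Eone$-membership of the rooftop envelope, which is already secured by the cited Darvas-type result. The remaining content is a direct unfolding of the definition of $d$ together with the trivial identity $P(f) = f$ whenever $f \in \SH_m(X,\omega)$. One minor point to keep in mind while writing out the argument is that $E$ is defined on unbounded potentials via the infimum over bounded majorants (Proposition \ref{prop: basic fact on energy}), so I should briefly note that the identities $E(P(u,v)) \leq E(u), E(v)$ used above are instances of the monotonicity built into that extension and hence require no separate verification.
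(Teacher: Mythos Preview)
Your proof is correct and is exactly the intended argument; the paper in fact omits the proof of this lemma entirely, treating it as immediate from the definition of $d$ (and referring to \cite{DDL3} for the analogous Monge--Amp\`ere statements). Your observation that $P(u,v)=u$ when $u\leq v$, together with the membership $P(u,v)\in\Eone$ already granted in the text, is all that is needed.
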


\begin{prop}
	\label{prop: derivative}
	Let $u,v$ be bounded $\omega$-$m$-sh functions, and set 
	$$
	\varphi_t:= P((1-t)u+tv, v), \ t\in [0,1].
	$$ 
	Then 
	$$
	\frac{d}{dt} E(\varphi_t) = \int_X (v-\min(u,v)) H_m(\varphi_t), \ \forall t\in [0,1]. 
	$$
\end{prop}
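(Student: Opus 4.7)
The plan is to compute directional derivatives of $E$ along the family $\varphi_t$ by combining the sandwich estimates in Proposition \ref{prop: basic fact on energy}(ii) with the fact that $H_m(\varphi_t)$ is concentrated on the contact set of the envelope. First I would observe that, since $u$ and $v$ are $\omega$-$m$-sh, so is $(1-t)u+tv$ for $t\in[0,1]$; hence $\varphi_t=P((1-t)u+tv,v)$ is a rooftop of two $\omega$-$m$-sh functions, and Corollary \ref{cor: Hes rooftop} yields that $H_m(\varphi_t)$ is supported on $\{\varphi_t=f_t\}$, where
$$
f_t:=\min\bigl((1-t)u+tv,\,v\bigr)=v-(1-t)(v-\min(u,v)).
$$
In particular, the pointwise identity $f_t-f_s=(t-s)(v-\min(u,v))$ holds everywhere on $X$.

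Next I would check the monotonicity and continuity of $t\mapsto\varphi_t$. Since $f_s\leq f_t$ for $s\leq t$, the definition of the envelope gives $\varphi_s\leq\varphi_t$. Moreover, $P(g+c)=P(g)+c$ for constants $c$, so $|P(g_1)-P(g_2)|\leq\|g_1-g_2\|_\infty$, which combined with $\|f_t-f_s\|_\infty\leq |t-s|\,\|v-u\|_\infty$ shows that $t\mapsto\varphi_t$ is uniformly Lipschitz and that the family is uniformly bounded. Proposition \ref{prop: basic fact on energy}(ii) then gives the sandwich
$$
\int_X(\varphi_t-\varphi_s)H_m(\varphi_t)\;\leq\;E(\varphi_t)-E(\varphi_s)\;\leq\;\int_X(\varphi_t-\varphi_s)H_m(\varphi_s).
$$
Restricting to contact sets and using $\varphi_s\leq f_s$, $\varphi_t=f_t$ on $\mathrm{supp}\,H_m(\varphi_t)$ (and symmetrically), I would replace $\varphi_t-\varphi_s$ by $f_t-f_s$ under each integral to obtain, for $s<t$,
$$
(t-s)\int_X(v-\min(u,v))H_m(\varphi_t)\;\leq\;E(\varphi_t)-E(\varphi_s)\;\leq\;(t-s)\int_X(v-\min(u,v))H_m(\varphi_s).
$$

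Finally, I would divide by $t-s$ and take $s\to t$. Since $\varphi_s\to\varphi_t$ uniformly, in particular in $m$-capacity, while the sequence remains uniformly bounded, Bedford--Taylor type continuity (an easy consequence of Theorem \ref{thm: convergence quasi set} applied to the quasi-continuous bounded test function $v-\min(u,v)=(v-u)^+$) gives
$$
\lim_{s\to t}\int_X(v-\min(u,v))H_m(\varphi_s)=\int_X(v-\min(u,v))H_m(\varphi_t),
$$
so both one-sided derivatives of $E(\varphi_t)$ exist and equal the claimed quantity; the argument for $s>t$ is symmetric. The main obstacle is the last step: the test function $v-\min(u,v)$ is only quasi-continuous, not continuous, so one cannot invoke weak convergence of measures directly. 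This is circumvented precisely because the uniform Lipschitz bound on $t\mapsto\varphi_t$ places us in the classical Bedford--Taylor framework for uniformly bounded $\omega$-$m$-sh functions, where integration of bounded quasi-continuous functions against Hessian measures is continuous under capacity convergence.
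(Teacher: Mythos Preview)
Your proof is correct and follows essentially the same route as the paper: the sandwich inequalities from Proposition \ref{prop: basic fact on energy}(ii), the concentration of $H_m(\varphi_t)$ on the contact set $\{\varphi_t=f_t\}$, the pointwise identity $f_t-f_s=(t-s)(v-\min(u,v))$, and passage to the limit via uniform convergence of $\varphi_s\to\varphi_t$. One small remark: the convergence of $\int_X(v-\min(u,v))H_m(\varphi_s)$ against the bounded quasi-continuous integrand is not really a consequence of Theorem \ref{thm: convergence quasi set} (which concerns integrals over quasi-open or quasi-closed sets), but rather of the Bedford--Taylor convergence result \cite[Proposition 3.12]{Lu13} that the paper invokes directly; your description of the mechanism is nonetheless accurate.
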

\begin{proof}
	We will only prove the formula for the right derivative as the same argument can be applied to treat the left derivative. Fix $t\in [0,1)$ and let $s>0$ be small. For notational convenience we set 
	$$
	f_t(x):= \min((1-t)u(x)+tv(x),v(x)), \ x \in X, \ t\in [0,1].
	$$ 
It follows from \cite[Theorem 3.2]{LN15} that $H_m(\varphi_t)$ is supported on the set $\{\varphi_t=f_t\}$. Combining this  with the concavity of the  energy $E$, see Proposition \ref{prop: basic fact on energy},  we obtain  
	\begin{flalign*}
		 E(\varphi_{t+s}) - E(\varphi_t) &\leq  \int_X (\varphi_{t+s}-\varphi_t) H_m(\varphi_t) \\
		 &= \int_X (\varphi_{t+s}-f_t) H_m(\varphi_t) \leq   \int_X (f_{t+s}-f_t) H_m(\varphi_t). 
	\end{flalign*}
	On the other hand we have that $f_{t+s}-f_t = s (v-\min(u,v))$. It thus follows that 
	$$
	\lim_{s\to 0^+} \frac{ E(\varphi_{t+s})- E(\varphi_t)}{s} \leq  \int_X (v-\min(u,v)) H_m(\varphi_t). 
	$$
	We use the same argument to prove the reverse inequality: 
	\begin{flalign*}
		E(\varphi_{t+s}) - E(\varphi_t) & \geq  \int_X (\varphi_{t+s}-\varphi_t) H_m(\varphi_{t+s}) = \int_X (f_{t+s}-\varphi_t) H_m(\varphi_{t+s})\\
	& \geq   \int_X (f_{t+s}-f_t) H_m(\varphi_{t+s}) = s  \int_X (v-\min(u,v)) H_m(\varphi_{t+s}). 
	\end{flalign*}
	As $s\to 0^+$ we have that $\varphi_{t+s}$ converges uniformly to $\varphi_t$. Moreover, $v-\min(u,v)$ is a bounded quasi continuous function on $X$, hence \cite[Proposition 3.12]{Lu13} gives 
	$$
	\lim_{s\to 0^+} \frac{E(\varphi_{t+s})- E(\varphi_t)}{s} \geq  \int_X (v-\min(u,v))H_m(\varphi_{t}). 
	$$
	This completes the proof. 
\end{proof}

\begin{coro}
	\label{cor: derivative}
	Let $u,v,\varphi_t$ be as in Proposition \ref{prop: derivative}. Then 
	\[
	E(v)-E(P(u,v)) = \int_0^1 \int_X (v-\min(u,v)) H_m(\varphi_t) dt. 
	\]
\end{coro}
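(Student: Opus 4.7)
Proof plan. The strategy is to apply the fundamental theorem of calculus to the scalar function $F(t):=E(\varphi_t)$ on $[0,1]$. Since $\varphi_0=P(u,v)$ and $\varphi_1=P(v,v)=v$, integrating the pointwise identity of Proposition \ref{prop: derivative} over $t\in[0,1]$ yields exactly the asserted formula, provided we can legitimately apply FTC.

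To justify this, I would show that the function
$$
\psi(t):=\int_X (v-\min(u,v))\,H_m(\varphi_t)
$$
is continuous on $[0,1]$; combined with the pointwise identity $F'(t)=\psi(t)$ coming from Proposition \ref{prop: derivative}, this gives $F\in C^1([0,1])$ and FTC applies. The key structural observation is that $t\mapsto\varphi_t$ is monotone non-decreasing: on $\{u\leq v\}$ the obstacle $\min((1-t)u+tv,v)=(1-t)u+tv$ is non-decreasing in $t$, while on $\{u>v\}$ it equals the constant $v$; monotonicity of the envelope operator in its obstacle then gives $\varphi_s\leq\varphi_t$ for $s\leq t$. The family is moreover uniformly bounded, since $P(u,v)\leq\varphi_t\leq v$.

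Given this, for any $t_k\to t_0$ one extracts monotone subsequences; by Corollary \ref{coro: increasing convergence} (for the increasing ones), together with the standard weak continuity of $H_m$ along uniformly bounded decreasing sequences of $\omega$-$m$-sh functions, we obtain $H_m(\varphi_{t_k})\to H_m(\varphi_{t_0})$ weakly. The integrand $v-\min(u,v)=(v-u)^+$ is bounded and quasi-continuous, so \cite[Proposition 3.12]{Lu13} yields $\psi(t_k)\to\psi(t_0)$, giving continuity of $\psi$.

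The main (mild) obstacle is the Hessian-measure continuity as $t\to t_0^-$, i.e.\ along a decreasing sequence of bounded $\omega$-$m$-sh functions; for the bounded setting this is classical and an easy consequence of lower semicontinuity (Theorem \ref{thm: lsc of Hes measure}) paired with the upper bound furnished by the monotone limit. Once continuity of $\psi$ is established, FTC applied to $F$ on $[0,1]$ delivers the corollary.
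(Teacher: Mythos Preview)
Your proposal is correct and follows the same approach the paper implicitly has in mind: the corollary is stated without proof, the understanding being that one integrates the derivative identity of Proposition~\ref{prop: derivative} from $0$ to $1$ using $\varphi_0=P(u,v)$ and $\varphi_1=v$.

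Your justification for applying the fundamental theorem of calculus is sound but slightly more elaborate than necessary. A shorter route, already implicit in the proof of Proposition~\ref{prop: derivative}, is to observe that $t\mapsto\varphi_t$ is Lipschitz in the uniform norm (the obstacle $\min((1-t)u+tv,v)$ is Lipschitz in $t$ uniformly in $x$, and the envelope operator is $1$-Lipschitz for the sup norm; this is precisely the uniform convergence used there). Then Proposition~\ref{prop: basic fact on energy}(ii) gives that $F(t)=E(\varphi_t)$ is Lipschitz, hence absolutely continuous, and since $F'(t)=\psi(t)$ for every $t$ by Proposition~\ref{prop: derivative}, the FTC applies directly. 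Your monotonicity-plus-weak-convergence argument for continuity of $\psi$ works too, and in fact yields the stronger conclusion $F\in C^1$, but the Lipschitz observation makes the subsequence extraction and the separate treatment of increasing versus decreasing limits unnecessary.
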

\begin{prop}
	\label{prop:  inequality max P}
	If  $u,v\in \Eone$ then $d(\max(u,v),u)\geq d(v,P(u,v))$. 
\end{prop}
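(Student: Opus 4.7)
The plan is to reduce to the bounded case, then express both $E(\max(u,v)) - E(u)$ and $E(v) - E(P(u,v))$ as integrals of a common integrand $f := (v-u)^+$ against Hessian measures along two natural curves, and conclude by a contact-set comparison via Proposition \ref{prop: viscosity}. First I would reduce to bounded $u,v$ by truncation $u_j := \max(u,-j)$, $v_j := \max(v,-j)$, noting that $\max(u_j,v_j) \searrow \max(u,v)$ and $P(u_j,v_j) \searrow P(u,v)$ (the latter because the decreasing limit is $\omega$-$m$-sh and satisfies $\lim_j P(u_j,v_j) \leq \min(u,v)$, hence sits below $P(u,v)$); then Proposition \ref{prop: continuity of I1} lets us pass to the limit in each energy. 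In the bounded case, Lemma \ref{lem: basic properties}(i) rewrites the target inequality as
\[
E(\max(u,v)) - E(u) \;\geq\; E(v) - E(P(u,v)).
\]

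Next I would apply Corollary \ref{cor: derivative} twice. With $(u,v)$ itself, setting $\alpha_t := P((1-t)u + tv, v)$, one gets
\[
E(v) - E(P(u,v)) = \int_0^1 \int_X (v - \min(u,v))\, H_m(\alpha_t)\, dt.
\]
Replacing $v$ by $\max(u,v)$ (so that $P(u,\max(u,v)) = u$), and noting that the corresponding curve $\beta_t := P((1-t)u + t\max(u,v),\max(u,v))$ simplifies to the $\omega$-$m$-sh function $(1-t)u + t\max(u,v)$, the same corollary yields
\[
E(\max(u,v)) - E(u) = \int_0^1 \int_X (\max(u,v) - u)\, H_m(\beta_t)\, dt.
\]
The elementary identity $v - \min(u,v) = \max(u,v) - u = f$ then makes the two integrands coincide, and the problem reduces to the pointwise-in-$t$ claim $\int_X f\, H_m(\alpha_t) \leq \int_X f\, H_m(\beta_t)$.

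For this comparison I would argue as follows. Inspecting the two obstacles: on $\{v > u\}$ both $\min((1-t)u+tv, v)$ and $(1-t)u + t\max(u,v)$ coincide with $(1-t)u+tv$, while on $\{v \leq u\}$ they are respectively $v$ and $u \geq v$; hence $\alpha_t \leq \beta_t$. By the concentration of $H_m(\alpha_t)$ on its contact set (as used in the proof of Proposition \ref{prop: derivative} via \cite[Theorem 3.2]{LN15}), the restriction of $H_m(\alpha_t)$ to $\{v>u\}$ is supported on $A_t := \{v > u\} \cap \{\alpha_t = (1-t)u+tv\}$. Since $\beta_t = (1-t)u+tv$ throughout $\{v>u\}$ and $\alpha_t \leq \beta_t$, on $A_t$ one has $\alpha_t = \beta_t$, so $A_t \subseteq \{\alpha_t = \beta_t\}$. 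Applying Proposition \ref{prop: viscosity} to $\alpha_t \leq \beta_t$ gives ${\bf 1}_{A_t} H_m(\alpha_t) \leq {\bf 1}_{A_t} H_m(\beta_t)$, and since $f \geq 0$ is supported in $\{v>u\}$, integration over $A_t$ and then a $t$-integration close the bounded case.

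The main obstacle I expect is arranging the two curves so that Corollary \ref{cor: derivative} produces the same integrand $f$ in each expression; the asymmetric-looking choice $\beta_t = (1-t)u + t\max(u,v)$ is dictated by the identity $\max(u,v)-u = (v-u)^+$. Once this is in place, the contact-set argument and Proposition \ref{prop: viscosity} reduce the problem to the trivial equality $\alpha_t = \beta_t$ on $A_t$, and the approximation step only uses monotone convergence of the envelopes together with Proposition \ref{prop: continuity of I1}.
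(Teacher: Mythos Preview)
Your proof is correct and follows the same overall architecture as the paper's: reduce to bounded potentials, express both energy differences as $\int_0^1 \int_X f\, H_m(\cdot)\, dt$ with the common integrand $f=(v-u)^+$ along the curves $\alpha_t = P((1-t)u+tv,v)$ and $\beta_t = (1-t)u + t\max(u,v)$, and compare the two measures on $\{v>u\}$ for each $t$.

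The one genuine difference is in the measure comparison. The paper identifies $\beta_t = \max(w_t,u)$ with $w_t := (1-t)u+tv$, uses plurifine locality to get $\Id_{\{v>u\}} H_m(\beta_t) = \Id_{\{v>u\}} H_m(w_t)$, and bounds $H_m(\alpha_t)$ via the rooftop estimate (Corollary~\ref{cor: Hes rooftop}) $H_m(P(w_t,v)) \leq \Id_{\{w_t\leq v\}} H_m(w_t) + \Id_{\{w_t\geq v\}} H_m(v)$, the second term being killed by the factor $f$. You instead observe $\alpha_t \leq \beta_t$, restrict to the contact set $A_t \subset \{\alpha_t = \beta_t\}$ where the mass of $f H_m(\alpha_t)$ lives, and invoke Proposition~\ref{prop: viscosity} directly. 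Both routes are short; yours trades the rooftop inequality and plurifine locality for a single application of the viscosity-type contact inequality, which is arguably a bit more streamlined here.
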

\begin{proof}
Set $\varphi=\max(u,v)$, $\psi= P(u,v)$. Observe that since $v\geq \psi$ and $\varphi \geq u$, the inequality to be proved is equivalent to $E(v)-E(\psi)\leq E(\varphi)-E(u)$. 

Recall  that for any $w \in \Eone$ the sequence of bounded potentials  $w_k:= \max(w, -k)$ decreases to $w$. 
	Consequently, using approximation, we can assume that both $u$ and $v$ (hence also $\varphi$ and $\psi$) are bounded. Using the formula for the derivative of  $t\mapsto E((1-t)u + t\varphi)$, see \cite[Lemma 6.3]{LN15}, \cite[Eq. (2.2)]{BBGZ13},  we can write
	\begin{equation}
	\label{eq: metric 1}
	E(\varphi)-E(u) = \int_0^1 \int_X (\varphi-u) H_m((1-t)u +t\varphi) \,dt. 	
	\end{equation}
	Set $w_t:=(1-t)u +tv$, for $t\in [0,1]$, and observe that 
	$$
	(1-t)u +t\varphi = \max(w_t,u) \  \ {\rm and}\ \ \Id_{\{w_t>u\}} = \Id_{\{v>u\}}, \ \forall t\in (0,1]. 
	$$
	It then follows from the plurifine locality that 
	$$
	\Id_{\{v>u\}}H_m(\max(w_t,u)) = \Id_{\{w_t>u\}}H_m(\max(w_t,u)) =  \Id_{\{v>u\}} H_m(w_t). 
	$$
	Using this, \eqref{eq: metric 1}, and the equality $\varphi-u=\Id_{\{v>u\}}(v-u)$, we can write
	\begin{equation*}
				E(\varphi)- E(u) = \int_0^1 \int_{\{v>u\}} (v-u) H_m(w_t)\, dt.
	\end{equation*}
	On the other hand, it follows from Corollary \ref{cor: Hes rooftop} that 
	\[
	H_m(P(w_t,v)) \leq {\bf 1}_{\{w_t\leq v\}} H_m(w_t) +  {\bf 1}_{\{w_t\geq v\}} H_m(v). 
	\]
	Using this, Corollary \ref{cor: derivative} and the fact that $\{w_t\leq  v\} = \{u\leq v\}$, for $t\in [0,1)$, we get
	\begin{flalign*}
	E(v) -E(\psi)&= \int_0^1 \int_X  (v-\min(u,v)) H_m(P(w_t, v)) \, dt \\ &\leq \int_0^1 \int_{\{u<v\}}  (v-u) H_m(w_t)\, dt,	
	\end{flalign*}
	hence the conclusion. 
\end{proof}

\begin{lemma}\label{lem: P monotonicity of distance}
For all $u,v,w\in \Eone$ we have $d(u,v) \geq d(P(u,w),P(v,w))$. 
\end{lemma}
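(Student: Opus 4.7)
My strategy is to first reduce to a monotonicity statement for the single-parameter map $f \mapsto d(f, P(f, w))$. Expanding both distances by their definition and using Lemma \ref{lem: basic properties}(i), together with the identities $P(P(u,w), P(v,w)) = P(u,v,w) = P(P(u,v), w)$, yields
\begin{equation*}
d(u, v) - d(P(u, w), P(v, w)) = d(u, P(u, w)) + d(v, P(v, w)) - 2\, d(P(u, v), P(u, v, w)),
\end{equation*}
where Lemma \ref{lem: basic properties}(i) is applied to the comparable pairs $P(u, w) \leq u$, $P(v, w) \leq v$, $P(u, v, w) \leq P(u, v)$. Thus it suffices to establish
\begin{equation*}
f \leq g \text{ in } \Eone \ \Longrightarrow \ d(g, P(g, w)) \geq d(f, P(f, w)),
\end{equation*}
and apply it with $(f, g) = (P(u, v), u)$ and $(f, g) = (P(u, v), v)$, then sum.

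Next, I will prove the key monotonicity inequality for bounded $f \leq g$ and bounded $w$ by a variational argument. Set $\alpha_t := (1-t) f + t g$ and $\varphi_t := P(\alpha_t, w)$, so that $\varphi_0 = P(f, w)$ and $\varphi_1 = P(g, w)$. Adapting the proof of Proposition \ref{prop: derivative} to the Lipschitz family $h_t := \min(\alpha_t, w)$, whose pointwise one-sided $t$-derivative equals $(g - f) \Id_{\{\alpha_t < w\}}$, yields
\begin{equation*}
\frac{d}{dt} E(\varphi_t) = \int_{\{\alpha_t < w\}} (g - f)\, H_m(\varphi_t),
\end{equation*}
while the standard polarization formula gives $\frac{d}{dt} E(\alpha_t) = \int_X (g - f)\, H_m(\alpha_t)$. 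The crucial pointwise estimate is $\Id_{\{\alpha_t < w\}}\, H_m(\varphi_t) \leq H_m(\alpha_t)$, which holds because $H_m(\varphi_t)$ is carried by $\{\varphi_t = \min(\alpha_t, w)\}$: restricting to $\{\alpha_t < w\}$ places the support in the contact set $\{\varphi_t = \alpha_t\}$, and Proposition \ref{prop: viscosity} applied to $\varphi_t \leq \alpha_t$ then gives the inequality. Integrating in $t$ against $g - f \geq 0$ produces $E(P(g, w)) - E(P(f, w)) \leq E(g) - E(f)$, which is the desired monotonicity for bounded potentials.

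Finally, I will upgrade to arbitrary $u, v, w \in \Eone$ by Cegrell approximation: set $u_k := \max(u, -k)$, $v_k := \max(v, -k)$, $w_k := \max(w, -k)$ and apply the bounded case to get $d(u_k, v_k) \geq d(P(u_k, w_k), P(v_k, w_k))$. Monotone decrease forces the envelopes $P(u_k, w_k)$, $P(u_k, v_k)$, and $P(u_k, v_k, w_k)$ to decrease to their unindexed counterparts $P(u, w)$, $P(u, v)$, and $P(u, v, w)$, all of which lie in $\Eone$ by stability of $\Eone$ under rooftop envelopes (announced at the start of Section \ref{sect: metric}). Proposition \ref{prop: continuity of I1} then supplies continuity of $E$ along each decreasing sequence, so both sides of the inequality converge to the desired quantities.

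The main obstacle is the rigorous derivation of $\frac{d}{dt} E(\varphi_t)$, since $h_t$ is only Lipschitz in $t$ with a jump in $\partial_t h_t$ across the set $\{\alpha_t = w\}$. I will handle this by following Proposition \ref{prop: derivative} almost verbatim: derive the bracketing $\int (f_{t+s} - f_t) H_m(\varphi_{t+s}) \leq E(\varphi_{t+s}) - E(\varphi_t) \leq \int (f_{t+s} - f_t) H_m(\varphi_t)$ and pass to $s \to 0^+$ using dominated convergence for the uniformly bounded family $(f_{t+s} - f_t)/s$ against the fixed measure $H_m(\varphi_t)$ (and the weak convergence $H_m(\varphi_{t+s}) \to H_m(\varphi_t)$ in the bounded case), noting that the contribution of the measure-zero-in-$t$ exceptional set $\{\alpha_t = w\}$ vanishes in the limit.
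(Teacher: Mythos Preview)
Your proof is correct and shares its overall architecture with the paper's: both reduce to the ordered case (your monotonicity statement $f\leq g \Rightarrow d(g,P(g,w))\geq d(f,P(f,w))$ is exactly the lemma for $v\leq u$), and both then recover the general statement by applying the ordered case to the pairs $(P(u,v),u)$ and $(P(u,v),v)$ and summing via the Pythagorean identity. Your explicit reduction formula is just an expanded form of this step.

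The genuine difference is in how the ordered case is established. The paper appeals to Proposition~\ref{prop:  inequality max P} (the inequality $d(\max(u,v),u)\geq d(v,P(u,v))$) through the short chain
\[
d(v,u)\;\geq\; d\bigl(v,\max(v,P(u,w))\bigr)\;\geq\; d\bigl(P(u,w),P(v,w)\bigr),
\]
so the derivative machinery of Proposition~\ref{prop: derivative} enters only indirectly, having been used earlier to prove Proposition~\ref{prop:  inequality max P}. You instead run a direct variational argument along $\alpha_t=(1-t)f+tg$, $\varphi_t=P(\alpha_t,w)$, combining the upper Dini bound for $E(\varphi_t)$ with the viscosity estimate $\Id_{\{\alpha_t<w\}}H_m(\varphi_t)\leq H_m(\alpha_t)$ (itself a consequence of Proposition~\ref{prop: viscosity} and the support property of $H_m(\varphi_t)$). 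This bypasses Proposition~\ref{prop:  inequality max P} at the cost of redoing a variant of Proposition~\ref{prop: derivative}; the net amount of work is comparable.

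Two minor remarks on your execution. First, only the \emph{upper} bound on the right Dini derivative of $E(\varphi_t)$ is needed, and that side is clean (dominated convergence against the fixed measure $H_m(\varphi_t)$); your discussion of the lower bound and the exceptional set $\{\alpha_t=w\}$ is unnecessary. Second, to integrate the Dini inequality you should note that $t\mapsto E(\varphi_t)$ is Lipschitz: from $\varphi_{t+s}-s\sup_X(g-f)\leq \min(\alpha_t,w)$ one gets $\varphi_t\leq \varphi_{t+s}\leq \varphi_t+s\sup_X(g-f)$, and Proposition~\ref{prop: basic fact on energy}(iii) then gives the Lipschitz bound on $E(\varphi_t)$. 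The approximation step is fine as written.
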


\begin{proof}
	We first assume that $v\leq u$. It follows that $v\leq \max(v,P(u,w))\leq u$, hence by Lemma \ref{lem: basic properties}$(iii)$ and Proposition \ref{prop: inequality max P} we have
\begin{flalign*}
		d(v,u)& \geq d(v, \max(v,P(u,w))) \geq  d(P(u,w),P(P(u,w),v))\\
		& = d(P(u,w),P(v,w)). 
\end{flalign*}
Observe that the last identity follows from the fact and $P(P(u,w),v)=P(u,w,v)$ and $P(u,w,v)= P(w, v)$ since $v\leq u$.
Now, we remove the assumption $u\geq v$. Since $\min(u,v)\geq P(u,v)$ we can use the first step to write
$
d(u,P(u,v)) \geq d(P(u,w),P(u,v,w)),
$
and 
 $
 d(v,P(u,v))\geq d(P(v,w),P(u,v,w)).
 $ 
To finish the proof, it suffices to use Lemma \ref{lem: basic properties}(iii) and to note that 
$
P(P(u,w),P(v,w))=P(u,v,w).
$
\end{proof}

\begin{theorem}
	$d$ is a distance on $\Eone$.
\end{theorem}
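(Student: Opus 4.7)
The plan is to verify the four defining properties of a metric. Symmetry $d(u,v)=d(v,u)$ is immediate from $P(u,v)=P(v,u)$. For non-negativity, the bounds $P(u,v)\leq u$ and $P(u,v)\leq v$ combined with monotonicity of $E$ (Proposition \ref{prop: basic fact on energy}) give $E(P(u,v))\leq \tfrac{1}{2}(E(u)+E(v))$, whence $d(u,v)\geq 0$; and $d(u,u)=0$ is trivial since $P(u,u)=u$.

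For the triangle inequality $d(u,w)\leq d(u,v)+d(v,w)$, I set $\alpha:=P(u,v)$ and $\beta:=P(v,w)$. Since $P(\alpha,\beta)\leq \alpha\leq u$ and $P(\alpha,\beta)\leq \beta\leq w$, one has $P(\alpha,\beta)\leq P(u,w)$, hence $E(P(\alpha,\beta))\leq E(P(u,w))$. Applying Lemma \ref{lem: P monotonicity of distance} to the triple $(\alpha,v,\beta)$ and using $\beta\leq v$ (so $P(v,\beta)=\beta$) yields $d(\alpha,v)\geq d(P(\alpha,\beta),\beta)$. Both sides translate to pure energy differences via Lemma \ref{lem: basic properties}(i), giving
\[
E(\alpha)+E(\beta)\leq E(v)+E(P(\alpha,\beta))\leq E(v)+E(P(u,w)).
\]
Substituting into the definitions of $d(u,v)$, $d(v,w)$, and $d(u,w)$ yields the inequality after cancellation.

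For positive-definiteness, assume $d(u,v)=0$. The Pythagorean identity of Lemma \ref{lem: basic properties}(iii) forces $d(u,P(u,v))=d(v,P(u,v))=0$, so by Lemma \ref{lem: basic properties}(i) we obtain $E(u)=E(v)=E(P(u,v))$. The task reduces to the auxiliary claim: if $w\leq u$ in $\Eone$ with $E(w)=E(u)$, then $w=u$. Proposition \ref{prop: basic fact on energy}(iii), extended from bounded potentials to $\Eone$ via the truncations $\max(\cdot,-k)$ and the monotone continuity of $E$ (Proposition \ref{prop: continuity of I1}), gives $\int_X(u-w)\,H_m(w)=0$, so $u=w$ on $\mathrm{supp}\,H_m(w)$. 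Using $P[u]=P[w]$ (Corollary \ref{cor: Darvas criterion}), Theorem \ref{thm: CP} transfers this vanishing to $H_m(u)(\{u>w\})=0$ as well. Finally, if $\{u>w\}$ had positive $\omega^n$-measure, Lemma \ref{lem: non-collapsing} would produce an $\omega$-$m$-sh function with the same singularity as $u$ whose Hessian measure charges $\{u>w\}$, contradicting the vanishing just established via one further application of the comparison principle.

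The main obstacle will be this last step. A priori, $u$ and $w$ need not be comparable up to an additive constant even under $P[u]=P[w]$, which prevents a direct use of the domination principle (Theorem \ref{thm: domination principle}) to upgrade a.e.\ equality with respect to $H_m(u)+H_m(w)$ to pointwise equality. The plan is to bypass this by combining Theorem \ref{thm: CP} in both directions (legitimate once both $P$-envelopes coincide) with plurifine locality (Lemma \ref{lem: Identity principle}) and Lemma \ref{lem: non-collapsing}, which together rule out a discrepancy set of positive volume.
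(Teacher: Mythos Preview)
Your proof is correct and follows the paper's approach: the triangle inequality via Lemma~\ref{lem: P monotonicity of distance} is identical to the paper's (your inequality $E(\alpha)+E(\beta)\leq E(v)+E(P(\alpha,\beta))$ is exactly the paper's chain $d(\varphi,P(\varphi,v))\geq d(P(\varphi,u),P(\varphi,v,u))$ after relabeling), and for non-degeneracy both arguments reduce $E(u)=E(P(u,v))$ with $P(u,v)\leq u$ to pointwise equality via the domination principle.

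Your worry in the last paragraph is a non-issue, and the detour through $H_m(u)$, Lemma~\ref{lem: non-collapsing}, and a further comparison is unnecessary. Since $u,w\in\Eone\subset\Em$, one has $P[u]=P[w]=0$, and the domination principle applies directly: the paper cites \cite{DL15} for the $\Em$ version, and in any case the \emph{proof} of Theorem~\ref{thm: domination principle} only uses the comparison principle under the hypothesis $P[w]\geq P[v_t]$, which is automatic here. So from $\int_{\{w<u\}}H_m(w)=0$ you may conclude $w\geq u$ immediately, exactly as the paper does.
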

\begin{proof}
The quantity $d$ is non-negative, symmetric and finite by definition. 
The fact that $d$ is non degenerate is a simple consequence of the domination principle.  Suppose $d(u,v)=0$. Lemma \ref{lem: basic properties}(iii) implies that $d(u,P(u,v))=d(v,P(u,v))=0$. Moreover, Lemma \ref{lem: basic properties}(iii) gives that $P(u,v) \geq u$ a.e. with respect to $H_m(P(u,v))$. By the domination principle, see \cite{DL15} (or Theorem \ref{thm: domination principle}), we obtain that $P(u,v) \geq u$, hence trivially $u=P(u,v)$. By symmetry $v=P(u,v)$, implying that $u=v$. 

It remains to prove the triangle inequality: for $u,v,\varphi\in \Eone$ we want to prove that
	$$
	d(u,v)\leq d(u,\varphi) +d(v,\varphi). 
	$$
	Using the definition of $d$  this amounts to showing that 
	$$
	 E(P(\varphi,u)) -E(P(u,v))\leq E(\varphi) -E(P(\varphi,v)). 
	$$
	But this follows from Lemma \ref{lem: P monotonicity of distance}, as we have the following sequence of inequalities: 
    \begin{flalign*}
    E(\varphi) - E(P(\varphi,v)) &= d(\varphi, P(\varphi, v)) 
    \geq  d( P(\varphi, u), P(P(\varphi, v), u))\\
     &=E(P(\varphi, u)) - E( P(\varphi, v, u))
    \geq   E(P(\varphi,u))-E(P(u,v)),
    \end{flalign*}
	where in the last line we have used the monotonicity of $E$, Lemma \ref{lem: basic properties}. 
\end{proof}

\subsection{Comparison with $I_1$}

\begin{lemma} \label{lem: halfwayest} For all  $u,v \in \mathscr E^1$ we have
$d\left(u,\frac{u+v}{2}\right) \leq \frac{3(m+1)}{2}d(u,v).$
\end{lemma}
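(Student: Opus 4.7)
The plan is to set $\phi := \tfrac{u+v}{2}$ and $P := P(u,v)$, and to control $d(u,\phi)$ by pushing everything down to the common floor $P$, then applying the two-sided energy estimate of Proposition~\ref{prop: basic fact on energy}(iii) twice.

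First, since $P \leq u$ and $P \leq v$ imply $P \leq \phi$, the $\omega$-$m$-sh function $P$ lies below $\min(u,\phi)$, hence $P(u,\phi) \geq P$. Because $E = E_m$ is non-decreasing, the definition of $d$ gives
\begin{equation*}
d(u,\phi) = E(u) + E(\phi) - 2E(P(u,\phi)) \leq \bigl(E(u)-E(P)\bigr) + \bigl(E(\phi)-E(P)\bigr).
\end{equation*}
The first summand is already at most $d(u,v)$ for free, since $d(u,v) = (E(u)-E(P)) + (E(v)-E(P))$ and both contributions are non-negative.

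Second, I would bound the remaining term $E(\phi)-E(P)$ by combining both inequalities in Proposition~\ref{prop: basic fact on energy}(iii). Since $P \leq \phi$, the upper bound there gives
\begin{equation*}
E(\phi) - E(P) \leq \int_X (\phi - P)\, H_m(P) = \tfrac{1}{2}\int_X (u-P)\, H_m(P) + \tfrac{1}{2}\int_X (v-P)\, H_m(P),
\end{equation*}
and the lower bound in the same proposition, applied to the pairs $P \leq u$ and $P \leq v$, gives $\int_X (w-P)\,H_m(P) \leq (m+1)(E(w)-E(P))$ for $w=u,v$. Summing yields $E(\phi)-E(P) \leq \tfrac{m+1}{2}\, d(u,v)$. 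Combining with the bound on the first summand,
\begin{equation*}
d(u,\phi) \leq \Bigl(1 + \tfrac{m+1}{2}\Bigr) d(u,v) = \tfrac{m+3}{2}\, d(u,v) \leq \tfrac{3(m+1)}{2}\, d(u,v),
\end{equation*}
which is the claim (with a marginally sharper constant).

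The only delicate point is that Proposition~\ref{prop: basic fact on energy} is stated for bounded $\omega$-$m$-sh functions, whereas $u,v \in \Eone$ may be unbounded. This is handled by the standard truncation: set $u^s := \max(u,-s)$, $v^s := \max(v,-s)$, $\phi^s := \tfrac{u^s+v^s}{2}$ and $P^s := P(u^s,v^s)$, derive the above estimate at each level $s$ (where all four functions are bounded), and let $s\to+\infty$. The monotone convergences $u^s \searrow u$, $v^s \searrow v$, $\phi^s \searrow \phi$, $P^s \searrow P$ together with the continuity of $E$ along decreasing sequences in $\Eone$ (Proposition~\ref{prop: continuity of I1}) pass every term to the correct limit and close the argument.
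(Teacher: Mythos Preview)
Your proof is correct and follows essentially the same route as the paper's: both push the estimate down to the common floor $P(u,v)$ and invoke the two-sided energy bounds of Proposition~\ref{prop: basic fact on energy}(iii). The paper reaches the same intermediate inequality $d(u,\phi)\leq (E(u)-E(P))+(E(\phi)-E(P))$ via the Pythagorean formula (Lemma~\ref{lem: basic properties}(iii)) rather than directly from monotonicity of $E$, but this is cosmetic. The one genuine difference is that the paper bounds the first summand $E(u)-E(P)$ by $\int_X(u-P)H_m(P)\leq (m+1)\,d(u,P(u,v))$, incurring an extra factor $(m+1)$, whereas you observe that $E(u)-E(P)\leq d(u,v)$ already without any loss; this is why you obtain the sharper constant $\tfrac{m+3}{2}$. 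Your truncation argument for passing from bounded to $\Eone$ potentials is fine: the only point worth making explicit is that $P(u^s,\phi^s)\searrow P(u,\phi)$ and $P(u,\phi)\in\Eone$ (the latter because $P(u,\phi)\geq P(u,v)\in\Eone$), so that Proposition~\ref{prop: continuity of I1} applies to every term.
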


\begin{proof} We have the following estimates:
\begin{flalign*}
d  &\Big(u,\frac{u + v}{2}\Big) = d\Big(u, P\Big(u,\frac{u + v}{2}\Big)\Big) + d\Big(\frac{u + v}{2},P\Big(u,\frac{u + v}{2}\Big)\Big)\\
& \leq d(u, P(u,v)) + d\Big(\frac{u+v}{2},P(u,v)\Big)\\
&\leq  \int_X (u - P(u,v))H_m(P(u,v)) + \int_X\Big(\frac{u+v}{2} - P(u,v)\Big) H_m(P(u,v))\\
&\leq \frac{3}{2}\int_X (u - P(u,v))H_m(P(u,v))+ \frac{1}{2}\int_X(v - P(u,v))H_m(P(u,v))\\
& \leq \frac{3(m+1)}{2}d(u, P(u,v)) + \frac{m+1}{2}d(v,P(u,v))\\
&\leq \frac{3(m+1)}{2}d(u,v),
\end{flalign*}
where in the second line we have additionally used that $P(u,v) \leq P(u,(u + v)/2)$.
\end{proof}

\begin{theorem}\label{thm: Darvas comparison}
For all $u,v \in \Eone$ we have 
$$
 d(u,v) \leq \int_X |u-v|(H_m(u)+H_m(v)) \leq  3(m+1) 2^{m+2}d(u,v).
$$
\end{theorem}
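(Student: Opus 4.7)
Let $\psi := P(u,v)$. For the lower bound $d(u,v) \le I_1(u,v)$, the Pythagorean formula (Lemma~\ref{lem: basic properties}(iii)) and Proposition~\ref{prop: basic fact on energy}(ii) give
\[
d(u,v) = (E(u)-E(\psi)) + (E(v)-E(\psi)) \le \int_X(u-\psi)\,H_m(\psi) + \int_X(v-\psi)\,H_m(\psi).
\]
By Corollary~\ref{cor: Hes rooftop}, $H_m(\psi) \le \mathbf 1_{\{\psi=u\}}H_m(u) + \mathbf 1_{\{\psi=v\}}H_m(v)$. Since $u-\psi$ vanishes on $\{\psi=u\}$ and on $\{\psi=v\}$ one has $v\le u$, the first integral is bounded by $\int(u-v)^{+}H_m(v)\le\int|u-v|H_m(v)$; the symmetric bound for the second yields $d(u,v)\le I_1(u,v)$.

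For the upper bound, set $w := \tfrac{1}{2}(u+v) \in \Eone$. Expanding $(\omega+dd^c w)^m$ binomially (cf.\ Lemma~\ref{lem: multilinearity}) and retaining only the two extreme terms gives $H_m(w)\ge 2^{-m}(H_m(u)+H_m(v))$, so using $|u-v|=2|u-w|$,
\[
I_1(u,v)\le 2^m\int_X|u-v|\,H_m(w) = 2^{m+1}\int_X|u-w|\,H_m(w).
\]
Everything thus reduces to bounding $\int_X|u-w|H_m(w)$ by a constant multiple of $d(u,w)$.

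To estimate this, decompose $|u-w|=(\max(u,w)-u) + (u-\min(u,w))$. On $\{w>u\}$ we have $\max(u,w)=w$, so plurifine locality (Lemma~\ref{lem: Identity principle}) gives $\mathbf 1_{\{w>u\}}H_m(w)=\mathbf 1_{\{w>u\}}H_m(\max(u,w))$, and Proposition~\ref{prop: basic fact on energy}(ii) then yields $\int(\max(u,w)-u)H_m(w)\le d(\max(u,w),u)$. A second plurifine exchange (using $\min(u,w)=u$ on $\{w>u\}$) combined with Proposition~\ref{prop: basic fact on energy}(iii) gives $d(\max(u,w),u)\le (m+1)d(w,\min(u,w))$. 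The other piece $\int(u-\min(u,w))H_m(w)\le(m+1)d(u,\min(u,w))$ is handled symmetrically. Since $P(u,w)\le\min(u,w)$, the Pythagorean identity yields $d(u,\min(u,w))+d(w,\min(u,w))\le d(u,w)$, so $\int_X|u-w|H_m(w)\le(m+1)d(u,w)$. Lemma~\ref{lem: halfwayest} finally gives $d(u,w)\le\frac{3(m+1)}{2}d(u,v)$, closing the estimate with a constant of the asserted order.

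The hard part will be the transfer estimate $\int|u-w|H_m(w)\lesssim d(u,w)$: since $u$ and $w$ are not comparable in general, Proposition~\ref{prop: basic fact on energy} does not apply directly to this integral. One must orchestrate plurifine locality twice, moving the integration against $H_m(w)$ onto $H_m(\max(u,w))$ on $\{w>u\}$ and onto $H_m(\min(u,w))$ on $\{u>w\}$, where parts (ii) and (iii) of Proposition~\ref{prop: basic fact on energy} become applicable; only then can one assemble the pieces via the Pythagorean inequality.
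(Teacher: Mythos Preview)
Your lower bound is correct and is exactly the paper's argument.

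The upper bound, however, has a genuine gap. You repeatedly use $\min(u,w)$ as if it were an $\omega$-$m$-subharmonic function: you write $H_m(\min(u,w))$ via plurifine locality, you invoke Proposition~\ref{prop: basic fact on energy}(iii) with $\min(u,w)$ in the role of the lower potential, and you speak of $d(u,\min(u,w))$ and $d(w,\min(u,w))$. But $\min(u,w)$ is in general \emph{not} $\omega$-$m$-sh, so none of $H_m(\min(u,w))$, $E(\min(u,w))$, or $d(\cdot,\min(u,w))$ is defined; Proposition~\ref{prop: basic fact on energy} and Lemma~\ref{lem: basic properties} only apply to potentials in $\Eone$. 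If one tries to repair this by replacing $\min(u,w)$ with $P(u,w)$ throughout, the inequality $d(\max(u,w),u)\le (m+1)\,d(w,P(u,w))$ is no longer clear---in fact Proposition~\ref{prop:  inequality max P} gives the reverse direction $d(\max(u,w),u)\ge d(w,P(u,w))$---and the ``second plurifine exchange'' you sketch does not go through. Even granting the strategy, the resulting constant would be $3(m+1)^2\,2^m$, which for $m\ge 4$ is strictly larger than the $3(m+1)\,2^{m+2}$ asserted.

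The paper sidesteps this issue entirely: with $w=(u+v)/2$ it uses the Pythagorean formula to bound $d(u,P(u,w))\ge\int(u-P(u,w))H_m(u)$ and $d(w,P(u,w))\ge\int(w-P(u,w))H_m(w)\ge 2^{-m}\int(w-P(u,w))H_m(u)$, then adds and applies the pointwise inequality $(u-P(u,w))+(w-P(u,w))\ge|u-w|$ (valid because $P(u,w)\le\min(u,w)$). This never requires $\min(u,w)$ to be $\omega$-$m$-sh and yields the stated constant directly.
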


\begin{proof}
It follows from  Lemma \ref{lem: basic properties}  that 
$d(u, v) = d(u, P(u,v))+ d(v, P(u,v))$. 
Since the  energy $E$ is concave along affine curves, Proposition \ref{prop: basic fact on energy}, we have
\begin{eqnarray*}
d(u, P(u,v))&=& E(u)- E(P(u,v)) \leq  \int_X (u-P(u,v)) H_m(P(u,v))\\
&\leq &  \int_{\{v=P(u,v)\}} (u-v) H_m(v) \leq \int_X |u-v| H_m(v).
\end{eqnarray*}
Similarly we get $d(v, P(u,v))\leq \int_X |u-v| H_m(u) $. Putting these two inequalities together we get the first inequality.

Next we establish the lower bound for $d$. 
By  Lemma \ref{lem: halfwayest} and the Pythagorean formula we have
\begin{flalign*}
\frac{3(m+1)}{2}d(u,v) & \geq d\Big(u,\frac{u + v}{2}\Big) \geq d\Big(u,P\Big(u,\frac{u + v}{2}\Big)\Big)\\ &\geq \int_X \Big(u - P\Big(u,\frac{u + v}{2}\Big)\Big) H_m(u).
\end{flalign*}
By a similar reasoning as above, and the fact that 
$
2^m H_m((u + v)/2) \geq H_m(u)
$
 we can write:
\begin{flalign*}
\frac{3(m+1)}{2}d(u,v) &\geq d\Big(u,\frac{u + v}{2}\Big) \geq d\Big(\frac{u+v}{2},P\Big(u,\frac{u + v}{2}\Big)\Big)\\
&\geq \int_X \Big(\frac{u+v}{2} - P\Big(u,\frac{u + v}{2}\Big)\Big) H_m((u + v)/2)\\
&\geq \frac{1}{2^m} \int_X \Big(\frac{u+v}{2} - P\Big(u,\frac{u + v}{2}\Big)\Big) H_m(u).
\end{flalign*}
Adding the last two estimates we obtain
\begin{flalign*}
 3 (m+1)&  2^{m} d(u,v)  \\
 &  \geq  \int_X \Big(\Big(u - P\Big(u,\frac{u + v}{2}\Big)\Big)+\Big(\frac{u+v}{2} - P\Big(u,\frac{u + v}{2}\Big)\Big) \Big) H_m(u)\\
& \geq \frac{1}{2}\int_X |u - v|  H_m(u).
\end{flalign*}
By symmetry we also have $3(m+1) 2^{m+1} d(u,v) \geq \int _X |u - v| H_m(v)$, and adding these last two estimates together the lower bound for $d$ is established.
\end{proof}

\begin{lemma}\label{lem: control sup by d}
There exists $A, B\geq 1$ such that for any $\varphi\in \mathscr{E}^1$
$$
-d(0, \varphi)\leq \sup_X \varphi \leq A d(0, \varphi)+B.
$$
\end{lemma}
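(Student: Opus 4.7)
The plan is to prove the two inequalities separately. For the lower bound I will normalize by the supremum and directly identify $d(0,\varphi)$ using the fact that $P(0,\varphi)=\varphi$ in this normalization; for the upper bound I will combine Theorem \ref{thm: Darvas comparison} with a Hartogs-type sup estimate for $\omega$-$m$-sh functions.

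For the lower bound, set $c:=\sup_X\varphi$ and $\psi:=\varphi-c$, so $\psi\in\mathscr{E}^1$, $\psi\leq 0$, and $\sup_X\psi=0$. If $c>0$ there is nothing to prove, so assume $c\leq 0$. Then $\min(0,\varphi)=\varphi$, and since $\varphi$ is itself $\omega$-$m$-sh and lies below $\min(0,\varphi)$, we get $P(0,\varphi)=\varphi$. The Hessian operators are translation invariant, so $H_k(\varphi)=H_k(\psi)$ for all $k$. Because $\psi\in\mathscr{E}^1$ has $E(\psi)>-\infty$, the slope formula of Lemma \ref{lem: slope formula} together with the a priori bound $\int_X H_k(\psi)\leq 1$ forces $\int_X H_k(\psi)=1$ for every $k\in\{0,\ldots,m\}$. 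Therefore $E(\varphi)=E(\psi)+c$, and using $E(0)=0$,
\[
d(0,\varphi)=E(0)+E(\varphi)-2E(P(0,\varphi))=-E(\varphi)=-E(\psi)-c.
\]
Since $\psi\leq 0$ gives $E(\psi)\leq 0$, we conclude $d(0,\varphi)\geq -c$, i.e. $\sup_X\varphi\geq -d(0,\varphi)$.

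For the upper bound, the key external input is a Hartogs-type estimate: there exists a constant $C_H>0$ (depending only on $(X,\omega,m)$) such that
\[
\sup_X u\leq \int_X u\,\omega^n+C_H\qquad\text{for every }u\in\SH_m(X,\omega).
\]
This is the standard consequence of the $L^1(X,\omega^n)$-compactness of $\{u\in\SH_m(X,\omega): \sup_X u=0\}$, which is available in the $m$-sh setting (see \cite{LN15}). On the other hand, applying Theorem \ref{thm: Darvas comparison} to the pair $(0,\varphi)$ and noting $H_m(0)=\omega^n$,
\[
3(m+1)2^{m+2}\,d(0,\varphi)\;\geq\;\int_X|\varphi|\bigl(\omega^n+H_m(\varphi)\bigr)\;\geq\;\int_X|\varphi|\,\omega^n\;\geq\;\int_X\varphi\,\omega^n.
\]
Combining this with Hartogs' estimate applied to $u=\varphi$,
\[
\sup_X\varphi\leq\int_X\varphi\,\omega^n+C_H\leq 3(m+1)2^{m+2}\,d(0,\varphi)+C_H,
\]
so one takes $A:=3(m+1)2^{m+2}$ and $B:=C_H$ (enlarged to at least $1$ if needed).

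The only non-mechanical point is invoking the Hartogs-type compactness in the $m$-sh context; once this is granted, both estimates reduce to formal manipulations of the ingredients already developed (the slope formula, translation invariance of $H_k$, and the Darvas comparison). A secondary subtle point in the lower bound is the full-mass propagation $\int_X H_k(\psi)=1$ for $k<m$, but this is supplied directly by the slope formula since we are working in $\mathscr{E}^1$.
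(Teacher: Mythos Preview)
Your proof is correct and follows essentially the same approach as the paper. The upper bound is identical: both you and the paper combine the Hartogs-type compactness estimate $\int_X|\varphi-\sup_X\varphi|\,\omega^n\leq C$ (your $C_H$) with Theorem~\ref{thm: Darvas comparison}. For the lower bound, the paper simply writes $-d(0,\varphi)=E(\varphi)\leq\sup_X\varphi$ when $\sup_X\varphi\leq 0$, the last inequality coming directly from Proposition~\ref{prop: basic fact on energy}(ii) with $v=0$; your route through the slope formula to get $E(\varphi)=E(\psi)+c$ and $E(\psi)\leq 0$ is correct but unnecessarily elaborate, since the translation identity $E(\psi+c)=E(\psi)+c$ already holds for bounded approximants by Stokes and passes to the limit.
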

\begin{proof}
If $\sup_X \varphi \leq 0$, then the right-hand side inequality is trivial, while
$$
-d(0,\varphi)=E(\varphi) \leq \sup_X \varphi.
$$
We therefore assume that $\sup_X \varphi \geq 0$. In this case the left-hand inequality is trivial. By compactness property of the set of normalized $\omega$-$m$-sh functions \cite[Lemma 2.13]{Lu13} we have
$$
\int_X |\varphi-\sup_X \varphi|\omega^n  \leq C_1, 
$$
where $C_1>0$ is a uniform constant. Using Theorem \ref{thm: Darvas comparison} the result then follows in the following manner: 
\begin{flalign*}
d(0,\varphi) & \geq  C_2 I_1(0,\varphi)  \geq  C_2\int_X |\varphi|\omega^n \geq  C_2 \sup_X \varphi - C_2 \int_X |\varphi-\sup_X \varphi | \omega^n\\
& \geq C_2 \sup_X \varphi -C_1C_2. 
\end{flalign*}
\end{proof}

\subsection{$d$ is complete}

\begin{theorem}\label{thm: d is complete}
Assume that $u_j$ is a Cauchy sequence in $(\Eone,d)$. Then $u_j$ $d$-converges to $u\in \Eone$. In particular, we can extract a subsequence, still denoted by $u_j$, such that 
$$
\lim_{l\to +\infty}P(u_k,u_{k+1},....,u_{k+l}) \in \Eone.
$$
\end{theorem}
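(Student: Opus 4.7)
The plan is to follow the Darvas-type strategy from \cite{DDL3}: extract a fast Cauchy subsequence, form nested rooftop envelopes to produce a decreasing candidate for each tail, let $k$ vary to get the final limit, and use Lemma \ref{lem: P monotonicity of distance} to transfer Cauchy estimates from $\{u_k\}$ to these envelopes.

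Since $\{u_j\}$ is $d$-Cauchy, $\sup_j d(0,u_j)<+\infty$, so Lemma \ref{lem: control sup by d} makes $\sup_X u_j$ uniformly bounded above. Passing to a subsequence, still denoted $\{u_k\}$, I arrange $d(u_k,u_{k+1})\leq 2^{-k}$ for all $k$. For each $k\geq 1$ and $l\geq 0$ set
$$
v_{k,l} := P(u_k,u_{k+1},\ldots,u_{k+l}).
$$
An induction, using the fact (stated in Section \ref{sect: metric}) that $P(u,v)\in\Eone$ whenever $u,v\in\Eone$, shows $v_{k,l}\in\Eone$. The decisive identity is
$$
v_{k,l+1}=P(v_{k,l},u_{k+l+1}),\qquad P(u_{k+l},v_{k,l})=v_{k,l},
$$
the latter because $v_{k,l}\leq u_{k+l}$. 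Applying Lemma \ref{lem: P monotonicity of distance} with $w=v_{k,l}$, $u=u_{k+l}$, $v=u_{k+l+1}$ yields the $d$-contraction
$$
d(v_{k,l},v_{k,l+1})\leq d(u_{k+l},u_{k+l+1})\leq 2^{-(k+l)}.
$$

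Since $l\mapsto v_{k,l}$ is decreasing, summing gives $d(u_k,v_{k,l})\leq 2^{1-k}$ uniformly in $l$, so $E(v_{k,l})\geq E(u_k)-2^{1-k}$ remains bounded below. The pointwise decreasing limit $v_k:=\lim_l v_{k,l}$ is therefore $\omega$-$m$-sh (not $\equiv -\infty$) and lies in $\Eone$; Proposition \ref{prop: continuity of I1} together with Theorem \ref{thm: Darvas comparison} then yields $d(v_{k,l},v_k)\to 0$, hence $d(u_k,v_k)\leq 2^{1-k}$. Next, the sequence $\{v_k\}_k$ is increasing (enlarging the index set can only raise the envelope) and bounded above since $\sup_X v_k\leq \sup_X u_k$ is controlled. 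Set $u:=(\lim_k v_k)^*\in\SH_m(X,\omega)$. From $v_k\leq u$, Theorem \ref{thm: monotonicity} gives $\int_X H_m(u)\geq \int_X H_m(v_k)>0$, while monotonicity of $E$ gives $E(u)\geq E(v_k)$ bounded below, so $u\in\Eone$. A second application of Proposition \ref{prop: continuity of I1} to $v_k\nearrow u$ delivers $d(v_k,u)\to 0$, and the triangle inequality
$$
d(u_k,u)\leq d(u_k,v_k)+d(v_k,u)\leq 2^{1-k}+d(v_k,u)\longrightarrow 0
$$
closes the argument; the statement about $\lim_l P(u_k,\ldots,u_{k+l})=v_k\in\Eone$ is exactly what was proved along the way.

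The main obstacle is establishing the $d$-contraction $d(v_{k,l},v_{k,l+1})\leq d(u_{k+l},u_{k+l+1})$, which is what allows the Cauchy property of $\{u_k\}$ to be transferred to the rooftop envelopes and thereby produces a convergent candidate. This rests on Lemma \ref{lem: P monotonicity of distance} and on the monotone $d$-continuity provided by Proposition \ref{prop: continuity of I1}; both of these ultimately depend on the monotonicity Theorem \ref{thm: monotonicity} proved earlier in this section.
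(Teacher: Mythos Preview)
Your proof is correct and follows the same overall Darvas-type strategy as the paper: pass to a fast Cauchy subsequence, form the nested rooftop envelopes $v_{k,l}=P(u_k,\ldots,u_{k+l})$, show $d(u_k,v_{k,l})\leq 2^{1-k}$ uniformly in $l$, take the decreasing limit $v_k\in\Eone$, then the increasing limit $u\in\Eone$, and conclude via Proposition~\ref{prop: continuity of I1} and Theorem~\ref{thm: Darvas comparison}.

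The one noteworthy difference is in how the key estimate $d(u_k,v_{k,l})\leq 2^{1-k}$ is obtained. The paper recurses ``from the left'': writing $\psi_{j,k}=P(u_j,\psi_{j+1,k})$ and using the Pythagorean formula plus the triangle inequality to get $d(u_j,\psi_{j,k})\leq d(u_j,\psi_{j+1,k})\leq 2^{-j}+d(u_{j+1},\psi_{j+1,k})$, then iterating. You instead recurse ``from the right'': using Lemma~\ref{lem: P monotonicity of distance} with $w=v_{k,l}$ to get the contraction $d(v_{k,l},v_{k,l+1})\leq d(u_{k+l},u_{k+l+1})$, then telescoping via Lemma~\ref{lem: basic properties}(ii). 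Both arguments give exactly the same bound; yours packages the Pythagorean step inside the $P$-monotonicity lemma, which is arguably cleaner. A minor inaccuracy: your closing remark that Lemma~\ref{lem: P monotonicity of distance} and Proposition~\ref{prop: continuity of I1} ``ultimately depend on the monotonicity Theorem~\ref{thm: monotonicity}'' is not quite right---both are established within $\Eone$ by arguments that do not invoke Theorem~\ref{thm: monotonicity}.
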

\begin{proof}
The argument is due to Darvas \cite{Dar15,Dar17AJM}, see also \cite[Theorem 3.10]{DDL3}.  
We can assume that 
$$ 
d(u_j,u_{j+1}) \leq 2^{-j}, j \geq 1. 
$$
As in the proof of \cite[Theorem 9.2]{Dar17AJM} we introduce the following sequences 
$$
\psi_{j,k}:=P(u_j, u_{j+1}, \dots,u_{k}), \ j\in \mathbb{N}, k\geq j. 
$$
Observe that, for $k\geq j+1$, $\psi_{j,k}=P(u_{j},  \psi_{j+1,k})$ and hence it follows from   Lemma \ref{lem: basic properties}(iii) and the triangle inequality that
\begin{eqnarray*}
d(u_j, \psi_{j,k})  &\leq d(u_j,  \psi_{j+1,k})
 \leq  d(u_j, u_{j+1}) +d(u_{j+1}, \psi_{j+1,k}) \\
 &\leq  2^{-j} + d( u_{j+1}, \psi_{j+1,k}).
\end{eqnarray*}
Repeating this argument several times we arrive at 
\begin{equation}
	\label{eq: completness 1}
	d(u_j,\psi_{j,k}) \leq 2^{-j+1}, \ \forall k\geq j+1.
\end{equation}
Using the triangle inequality for $d$ and the above we see that
$$
d(0,\psi_{j,k}) \leq  d(0,u_j) + d(u_j,\psi_{j,k}) \leq  d(0, u_1) + 2 + 2^{-j+1}
$$
is uniformly bounded. 
It follows from Theorem \ref{thm: Darvas comparison} and Lemma \ref{lem: control sup by d} that 
$I_1(0,\psi_{j,k})$, as well as $\sup_X \psi_{j,k}$, is uniformly bounded. We then infer, using the triangle inequality for $d$, that $d(0,\psi_{j,k}-\sup_X \psi_{j,k})$ is uniformly bounded hence so is $E(\psi_{j,k})$. Therefore, Proposition \ref{prop: continuity of I1} ensures that $\psi_j:=\lim_{k} \psi_{j,k} $ belongs to $\mathscr{E}^1$. From 
	\eqref{eq: completness 1} we obtain that $d(u_j,\psi_j) \leq 2^{-j+1}$, hence we only need to show that the $d$-limit of the increasing sequence $\{\psi_j\}_j  \subset \mathscr{E}^1$ is in $\mathscr E^1$.
  
Lemma \ref{lem: control sup by d} implies that $\sup_X \psi_j$ is uniformly bounded, hence $\psi:= \lim_j \psi_j \in \SHm$.	Now $\psi_j$ increases a.e. towards $\psi$, hence  $\psi \in \mathscr{E}^1$. Therefore by Proposition \ref{prop: continuity of I1} we have $I_1(\psi_j,\psi)\to 0$. It thus follows  from Theorem \ref{thm: Darvas comparison} that
$d(\psi_j,\psi) \rightarrow 0$.
\end{proof}

\subsection{$\omega$-$m$-subharmonic subextension}

In the previous sections, we easily adapted the arguments in \cite{DDL3}. These are necessary to derive the following result which is important in the sequel. 

\begin{theorem}\label{thm: compactness}
Assume that $u_j \in \Em$ satisfies $\sup_X u_j=0$ and 
$H_m(u_j) \leq A H_m(\psi)$, for some positive constant $A$ and some $\psi \in \SH_m(X,\omega) \cap L^{\infty}(X)$. Then $u_j \in \Eone$, and  a subsequence of $u_j$ $d$-converges to some $u\in \Eone$. In particular, we can extract a subsequence of $u_j$, still denoted by $u_j$, such that 
$$
\lim_{l\to +\infty} P(u_k,...,u_{k+l}) \in \Eone, \ \forall k.
$$
\end{theorem}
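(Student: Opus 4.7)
The plan is to mimic the proof of Theorem \ref{thm: d is complete} but, lacking a $d$-Cauchy hypothesis, to extract $d$-compactness from the Hessian domination by constructing lower rooftop envelopes whose Hessian measures inherit the bound $A H_m(\psi)$; a squeeze argument between these envelopes and the decreasing maximum envelopes then yields $d$-convergence along a subsequence.

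First, to obtain a uniform bound $\sup_j d(0, u_j) \leq C$, I would apply Proposition \ref{prop: basic fact on energy}(ii) with $v = 0$ and the hypothesis:
$$-E(u_j) \leq \int_X (-u_j) H_m(u_j) \leq A \int_X (-u_j) H_m(\psi).$$
Expanding $H_m(\psi)$ and using iterated integration by parts against the bounded potential $\psi$ (Chern--Levine--Nirenberg), one controls the right-hand side by $C(\|\psi\|_\infty)(1+\int_X |u_j|\omega^n)$, which is uniformly bounded by $L^1(\omega^n)$-compactness of normalized $\omega$-$m$-sh functions. After passing to a subsequence along which $u_j \to u$ a.e., I introduce the envelopes
$$\varphi_{j,k} := P(u_j, u_{j+1}, \ldots, u_{j+k}).$$
Since every $u_l \in \Em$ satisfies $P[u_l]=0$ (Corollary \ref{cor: Darvas criterion}), iterating Proposition \ref{prop: rooftop envelope 1} with model potential $\phi = 0$ gives $\varphi_{j,k}\in\Em$. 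An induction using Corollary \ref{cor: Hes rooftop} on the decomposition $\bigcup_l \{\varphi_{j,k}=u_l\}$, combined with Proposition \ref{prop: viscosity} on each stratum, yields $H_m(\varphi_{j,k}) \leq A H_m(\psi)$; rerunning the Step~1 estimate then produces $d(0,\varphi_{j,k}) \leq C'$ uniformly.

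For the convergence, monotonicity in $k$ with Proposition \ref{prop: continuity of I1} gives $\varphi_j := \lim_k \varphi_{j,k}\in\Eone$ with $d(\varphi_{j,k},\varphi_j)\to 0$, and $\{\varphi_j\}$ is increasing in $j$ with uniform $d$-bound, so $\varphi := \lim_j \varphi_j \in \Eone$. By construction $\varphi \leq u_l$ for every $l\geq j$, whence $\varphi \leq u$ a.e., while the domination principle (Theorem \ref{thm: domination principle}) together with Theorem \ref{thm: monotonicity} and the mass equality $\int H_m(\varphi) = \int H_m(u) = 1$ forces $\varphi = u$ a.e. To bridge from $\varphi_j$ to $u_j$ in $d$, I squeeze $\varphi_j \leq u_j \leq w_j$, where $w_j := (\sup_{l\geq j} u_l)^*\in \Eone$ (whose $d$-boundedness follows from $E(w_j)\geq E(u_l)$) decreases a.e.\ to $u$. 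Proposition \ref{prop: continuity of I1} then forces $E(w_j), E(\varphi_j) \to E(u)$, so monotonicity of $E$ along $\varphi_j \leq u_j \leq w_j$ yields $E(u_j)\to E(u)$; the Pythagorean identity $d(u_j, u) = E(u_j) + E(u) - 2 E(P(u_j, u))$, combined with $E(P(u_j, u)) \to E(u)$, concludes $d(u_j, u) \to 0$.

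The main obstacle is the uniform $\Eone$-bound on the envelopes $\varphi_{j,k}$, since the rooftop of a large family of unbounded functions can in principle drift to $-\infty$. Controlling $\sup_X \varphi_{j,k}$ from below requires coupling the mass bound $H_m(\varphi_{j,k}) \leq A H_m(\psi)$ with an $L^\infty$-estimate of Ko{\l}odziej--Dinew type (cf.\ \cite{DK14,LN15}) for measures dominated by the Hessian of a bounded potential, and trading the resulting $L^\infty$-control, via Lemma \ref{lem: control sup by d}, into the required $d$-bound; the identification $\varphi = u$ similarly relies on Theorem \ref{thm: monotonicity} to ensure that the full Hessian mass is preserved in the limit.
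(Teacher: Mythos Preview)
Your overall strategy --- build the rooftop envelopes $\varphi_{j,k}:=P(u_j,\ldots,u_{j+k})$ directly, control their energy, and squeeze --- differs from the paper's route, which first proves $I_1(u_j,u)\to 0$ \emph{without} touching rooftops (via $\int|u_j-u|H_m(\psi)\to 0$ and the Cegrell--Guedj--Zeriahi argument that $H_m(u)\leq A H_m(\psi)$), and only \emph{then} invokes completeness (Theorem~\ref{thm: d is complete}) to obtain the rooftop statement. Your approach could in principle work, but as written it has two genuine gaps.

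\textbf{(1) The uniform $\Eone$-bound on $\varphi_{j,k}$ is not established.} You correctly isolate this as the main obstacle, but the proposed fix is wrong: a Ko{\l}odziej--Dinew $L^\infty$-estimate requires the right-hand side to have density in $L^p(\omega^n)$ for some $p>n/m$, whereas $A H_m(\psi)$ is merely the Hessian measure of a bounded potential and need not be absolutely continuous at all. Without such an estimate you cannot bound $\sup_X\varphi_{j,k}$ from below, and then your CLN bound $\int|\varphi_{j,k}|H_m(\psi)\leq C$ fails (it scales with $|\sup_X\varphi_{j,k}|$), so the energy of $\varphi_{j,k}$ is not controlled uniformly in $k$. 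The paper sidesteps this entirely: it never needs uniform bounds on the rooftops, because $d$-convergence of $u_j$ is established first, making the sequence $d$-Cauchy, after which the rooftop bounds come for free from the proof of completeness.

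\textbf{(2) The identification $\varphi=u$ is unjustified.} Even granting that $\varphi\in\Eone$ and $\varphi\leq u$, the sentence ``the domination principle together with Theorem~\ref{thm: monotonicity} and the mass equality $\int H_m(\varphi)=\int H_m(u)=1$ forces $\varphi=u$'' is simply false: any two distinct functions in $\Em$ have the same mass, and the domination principle requires $H_m(u)(\{u<\varphi\})=0$, which is vacuous here since $\varphi\leq u$. Nothing you have written rules out $\varphi<u$ on a set of positive volume. Since your squeeze $E(\varphi_j)\leq E(u_j)\leq E(w_j)$ only yields $E(u_j)\to E(u)$ when $E(\varphi)=E(u)$, and your control of $E(P(u_j,u))$ also rests on $\varphi_j\to u$, this gap is fatal for the $d$-convergence conclusion. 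The paper avoids any such identification: it proves directly that $\int|u_j-u|H_m(u_j)\to 0$ and (after showing $H_m(u)\leq A H_m(\psi)$) that $\int|u_j-u|H_m(u)\to 0$, which is $I_1(u_j,u)\to 0$ and hence $d(u_j,u)\to 0$ by Theorem~\ref{thm: Darvas comparison}.
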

The result above is also new in the Monge-Amp\`ere case. It produces in particular a $\omega$-$m$-sh function lying below a suitably chosen subsequence of $(u_j)$.

\begin{proof}
We will use $C_1,C_2,...$ to denote uniform constants. 

We can assume that $-1\leq \psi \leq 0$ and $u_j$ converges in $L^1$ to $u\in \SHm$. By the Chern-Levine-Nirenberg inequality \cite[Corollary 3.18]{Lu13} we have that
$$
\int_X |u_j| H_m(u_j) \leq A\int_X |u_j| H_m(\psi) \leq C_1, \forall j.
$$
 It thus follows from Proposition \ref{prop: basic fact on energy} that $u_j \in \Eone$  and $|E(u_j)| \leq C_1$. 
Thus by \cite[Lemma 6.8]{LN15} we have 
$$
\int_X u_j^2 H_m(\psi)  \leq  2\int_0^{+\infty} t \capa_m(u_j<-t) dt \leq C_2
$$
is also uniformly bounded. Therefore, by the proof of \cite[Lemma 11.5]{GZbook} we have 
$
\int_X (u_j-u) H_m(\psi) \to 0. 
$
Define $\tilde{u}_k := (\sup(u_l, l\geq k))^*$. Then 
$$
|u_k-u|  = 2\max(u,u_k) -u-u_k \leq 2 (\tilde{u}_k-u) + u-u_k.
$$ 
Since $\tilde{u}_k$ decreases to $u$, it follows that
\begin{equation}\label{eq: compactness 2}
\int_X |u_j-u| H_m(u_j)  \leq A\int_X |u_j-u| H_m(\psi) \to 0. 
\end{equation}
We next claim that $H_m(u) \leq A H_m(\psi)$. The proof of this part is taken from \cite{Ceg98}, \cite{GZ07}. 
After extracting a subsequence we can assume that 
$$
\int_X |u_j-u|H_m(u_j) \leq 2^{-j}.
$$ 
We define $v_j := \max(u_j,u-1/j)$. Then $v_j$ converges in $m$-capacity to $u$. Hence by \cite[Theorem 3.9]{Lu13} $H_m(v_j)$ weakly converges to $H_m(u)$.  On the other hand we have 
$$
\int_{\{u_j \leq u-1/j\}} H_m(u_j) \leq j \int_X |u_j-u|H_m(u_j) \leq j 2^{-j} \to 0.
$$
 We thus have, for any positive continuous function $\chi$, 
\begin{eqnarray*}
\int_X \chi H_m(u) &=& \lim_{j\to +\infty} \int_X \chi H_m(v_j) \geq   \limsup_{j\to +\infty}  \int_{\{u_j>u-1/j\}} \chi H_m(u_j)\\
&\geq &  \limsup_{j\to +\infty} \int_X \chi H_m(u_j),
\end{eqnarray*}
where in the first inequality we have used Lemma \ref{lem: Identity principle}. 
But $H_m(u_j)$ and $H_m(u)$ have the same total mass, hence $H_m(u_j)$ weakly converges to $H_m(u)$ and therefore $H_m(u) \leq A H_m(\psi)$ as claimed. This together with \eqref{eq: compactness 2}  yields $I_1(u_j,u)\to 0$, hence by Theorem \ref{thm: Darvas comparison} we have $d(u_j,u) \to 0$. The last statement follows from Theorem \ref{thm: d is complete}. 
\end{proof}

\section{Complex Hessian equations with prescribed singularity}\label{sect: solutions}

Given a non-pluripolar positive measure $\mu$ and a model potential $\phi$ such that $\mu(X) = \int_X \theta_{\phi}^n>0$, we want to find $u \in \Ephi$ such that $H_m(u) =\mu$. 

The strategy is described in \cite{DDL4} which is inspired by the supersolution method of \cite{GLZJDG}. One constructs supersolutions of a well chosen family of equations and takes the lower envelope of supersolutions to get a solution. The main issue is to bound the supersolutions from below. To make the arguments of \cite{DDL4} work in Hessian setting we need a volume-capacity comparison of  the form : 
 $$
 \int_E f\omega^n \leq \left ( \capa_{\phi}(E)\right )^{1+\varepsilon},   \ E \subset X,
 $$
 for some $\varepsilon >0$. Here
$$
\capa_{\phi}(E) = \sup \left\{\int_E H_m(u) \setdef u \in \SHm, \ \phi-1 \leq u \leq \phi \right\}.
$$
 In the flat case where $\omega =dd^c \|z\|^2$ and $X =\Omega\subset \mathbb{C}^n$, it was conjectured by B{\l}ocki \cite{Bl05}  that $\SH_m(\Omega) \subset L^{q}(\Omega)$, for all $q<nm/(n-m)$.  If the compact manifold version of B{\l}ocki's conjecture holds then the $L^{\infty}$ estimate in \cite{DDL4} can be adapted in the Hessian setting giving solution for $L^p$ densities $p>n/m$. In the general case of non-$m$-polar measures the approach in \cite{DDL4} using Cegrell's method \cite{Ceg98} also breaks down in the Hessian setting. 
 
 Below, we will follow the main lines of \cite{DDL4} with several modifications. One of this is the use of the complete metric $d$ in $\Eone$ to construct subextensions of a $d$-converging sequence in $\Eone$. This procedure not only  replaces the relative $L^{\infty}$  estimate in \cite{DDL4} but also allows us to solve the complex Hessian equation directly without regularizing the measure $\mu$ by taking local convolution.

\subsection{Existence of solutions for bounded densities}

To explain the main ideas of the proof we first start with the case where $\mu = f\omega^n$ for some $0\leq f\in L^{\infty}(X,\omega^n)$, and $\phi = P[\alpha\phi_0]$, for some $\alpha\in (0,1)$ and $\phi_0 \in \SHm$. The general case, which is more involved and requires extra work, will be treated later.  

\begin{theorem}\label{thm: Hes eq}
Assume that $\phi =P[\alpha \phi_0]$, where $\alpha\in (0,1)$,  $\phi_0 \in \SH_m(X,\omega)$,  $0\leq f \in L^{\infty}(X)$ and  $\int_X H_m(\phi) = \int_X f\omega^n$. Then there exists $u\in \mathscr{E}_{\phi}(X,\omega,m)$ such that $H_m(u) = f\omega^n$. 
\end{theorem}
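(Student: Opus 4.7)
The plan is to follow the supersolution strategy of \cite{GLZJDG, DDL4}, with the modification that the complete metric $d$ on $\Eone$ (Theorem \ref{thm: d is complete}) together with the compactness Theorem \ref{thm: compactness} will play the role of the relative $L^\infty$-estimate used in the Monge--Amp\`ere setting. The hypothesis $\phi = P[\alpha\phi_0]$ with $\alpha \in (0,1)$ enters through Corollary \ref{coro: subextension 2}, which produces strongly $m$-positive functions controllably below $\phi$; this strict positivity is what makes the variational problem well-posed.

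\emph{Step 1: a regularized family of equations.} Normalize $\sup_X \phi = 0$. For each $c>0$ I would find $u_c \in \Ephi$ with $\sup_X u_c = 0$ solving
\[
H_m(u_c) = e^{c(u_c - \phi)} f\,\omega^n
\]
by maximizing the concave functional $G_c(v) := E(v) - \frac{1}{c}\int_X e^{c(v - \phi)} f\,\omega^n$ over the normalized set $\{v \in \Ephi : \sup_X v = 0\}$. Concavity of $E$ along affine curves (Proposition \ref{prop: basic fact on energy}) and convexity of the exponential give concavity of $G_c$; coercivity follows from Lemma \ref{lem: control sup by d}, since $G_c(v) \leq E(v) \to -\infty$ as $d(0, v) \to \infty$ on the normalized class. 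A maximizer $u_c$ exists and satisfies the displayed Euler--Lagrange equation. By Corollary \ref{coro: maximal}, $u_c \leq \phi$.

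\emph{Step 2: uniform comparison with a bounded barrier.} From $u_c \leq \phi$ one has $e^{c(u_c - \phi)} \leq 1$, hence $H_m(u_c) \leq f\,\omega^n \leq \|f\|_{L^\infty}\,\omega^n$ uniformly in $c$. Fix a bounded $\psi \in \SH_m(X,\omega) \cap L^\infty(X)$ with $H_m(\psi) \geq c_0\,\omega^n$ (such $\psi$ is produced by \cite{LN15}). Then $H_m(u_c) \leq (\|f\|_{L^\infty}/c_0)\,H_m(\psi)$ uniformly, which is precisely the hypothesis of Theorem \ref{thm: compactness}.

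\emph{Step 3: compactness and the limit.} For any sequence $c_j \to 0^+$, Theorem \ref{thm: compactness} applied to $\{u_{c_j}\}$ yields, after extracting a subsequence, a $d$-limit $u \in \Eone$ together with subextensions $\chi_k := \lim_{l\to\infty} P(u_{c_k}, \ldots, u_{c_{k+l}}) \in \Eone$ with $\chi_k \leq u_{c_j}$ for $j \geq k$. By Theorem \ref{thm: Darvas comparison} the $d$-convergence gives $I_1(u_{c_j}, u) \to 0$, and Theorem \ref{thm: dominated convergence} (with dominating measure $(\|f\|_{L^\infty}/c_0) H_m(\psi)$) yields weak convergence $H_m(u_{c_j}) \to H_m(u)$. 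For the right-hand side, $0 \leq e^{c_j(u_{c_j} - \phi)} \leq 1$ by Step 1, and after extracting a further subsequence with pointwise a.e. convergence $u_{c_j} \to u$, we have $c_j(u_{c_j} - \phi) \to 0$ wherever $u$ and $\phi$ are finite, i.e., off a pluripolar set on which $f\omega^n$ puts no mass. Dominated convergence gives $e^{c_j(u_{c_j} - \phi)} f\,\omega^n \to f\,\omega^n$, hence $H_m(u) = f\,\omega^n$. The bound $u_{c_j} \leq \phi$ passes to $u \leq \phi$ in the $L^1$-limit, so $u$ is more singular than $\phi$; together with $\int_X H_m(u) = \int_X f\omega^n = \int_X H_m(\phi)$, this gives $u \in \Ephi$ by definition.

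The principal obstacle is obtaining a uniform lower bound that makes the family $\{u_c\}$ genuinely compact in finite energy. In the Monge--Amp\`ere case of \cite{DDL4} this is supplied by a relative $L^\infty$-estimate whose analogue in the Hessian setting would require integrability of $\omega$-$m$-sh functions beyond the threshold currently known (B\l{}ocki's still-conjectural range $p < nm/(n-m)$). Here we sidestep this obstruction entirely by invoking the completeness of the metric $d$ through Theorem \ref{thm: compactness}, which supplies the subextensions $\chi_k$ in $\Eone$ that serve as the needed lower barrier.
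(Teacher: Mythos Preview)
Your proposal has a fundamental gap in Step~1 that prevents the argument from getting off the ground. The energy functional $E$ you wish to maximize is the one from Proposition~\ref{prop: basic fact on energy}, extended to all of $\SHm$; but this extension takes the value $-\infty$ outside $\Eone$. When $\phi = P[\alpha\phi_0]$ is unbounded (the only case of interest), every $v \in \Ephi$ with $\sup_X v = 0$ satisfies $v \leq \phi$ by Corollary~\ref{coro: maximal}, hence $E(v) \leq E(\phi) = -\infty$. Thus $G_c \equiv -\infty$ on your admissible set, and the variational problem is vacuous. The paper never develops a relative energy on $\Ephi$, so there is nothing to substitute here. For the same reason your appeal to Lemma~\ref{lem: control sup by d} for coercivity is empty: that lemma lives entirely inside $\Eone$.

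The second gap compounds the first. Even granting a family $u_c \in \Ephi$, Theorem~\ref{thm: compactness} is stated and proved for sequences in the full-mass class $\Em$, and its conclusion places the limit in $\Eone$; functions in $\Ephi$ with $\int_X H_m(\phi) < 1$ satisfy neither. (This is precisely why, in the proof of the general Theorem~\ref{thm: existence general}, Theorem~\ref{thm: compactness} is applied not to the approximate solutions themselves but to auxiliary envelopes $v_k = P(bu_k - (b-1)\max(\phi,-k)) \in \Em$.) Finally, your invocation of Theorem~\ref{thm: dominated convergence} in Step~3 is illegitimate: that result is proved only for \emph{decreasing} sequences, and your $u_{c_j}$ are not known to be monotone.

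The paper's own proof takes a different and more direct route that exploits the special structure of the hypothesis $\phi = P[\alpha\phi_0]$. It first proves two pointwise $L^\infty$-type estimates, Lemma~\ref{lem: uniform estimate 1} and Lemma~\ref{lem: uniform estimate 2}, via the envelope $P(bu - (b-1)\phi_0)$ and Proposition~\ref{prop: viscosity}; these give uniform lower bounds of the form $u \geq \alpha\phi_0 - C$ for solutions of the approximating equations. With such bounds in hand, the supersolution scheme of \cite{GLZJDG} runs without recourse to the metric $d$: one solves in $\Em$ the perturbed equations $H_m(u_k) = a\Id_{\{\phi\leq -k\}}H_m(\max(\phi,-k)) + c_k f\omega^n$, uses Lemma~\ref{lem: uniform estimate 2} to bound $u_k \geq \phi - C_1$, passes to rooftop envelopes, and then iterates with Lemma~\ref{lem: uniform estimate 1} to force the limit into $\Ephi$. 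The hypothesis $\alpha < 1$ enters exactly here, through the choice $b = (1-\alpha)^{-1}$ in Lemma~\ref{lem: uniform estimate 1}, not via Corollary~\ref{coro: subextension 2} as you suggest.
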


As shown in \cite{DDL2, DnL17}, in this case one can use the $\phi$-capacity to establish a $L^{\infty}$-estimate.  We propose, however,  in this section a different approach using the envelope which is interesting in its own right.

\begin{lemma}\label{lem: uniform estimate 1}
Fix $\alpha\in (0,1)$ and let $\phi_0$ be a $\omega$-$m$-sh function on $X$, normalized by $\sup_X \phi_0=0$. Assume that $u\in \SH_m(X,\omega)$ is less singular than $\alpha \phi_0$ and
$$
H_m(u) =f\omega^n, \ \sup_X u=0,
$$ 
where $f\in L^p(X,\omega^n), \ p>n/m$. Then, for a constant $C$ depending on $p,n,m$, $X,\omega$, $\alpha$, $\|f\|_p$,  we have
$$
u\geq \alpha \phi_0-C. 
$$
\end{lemma}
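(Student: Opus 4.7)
I follow the envelope strategy advertised by the authors, reducing the relative estimate to the absolute Dinew--Ko\l{}odziej $L^\infty$ bound. Introduce
\[
\tilde\psi \;:=\; P\Bigl(\tfrac{u-\alpha\phi_0}{1-\alpha}\Bigr),
\]
the largest $\omega$-$m$-sh function dominated by $w := (u-\alpha\phi_0)/(1-\alpha)$. The less-singularity hypothesis makes $w$ bounded from below, so $\tilde\psi \in \SH_m(X,\omega)\cap L^\infty(X)$. Writing $w = \tfrac{1}{1-\alpha}u - \tfrac{\alpha}{1-\alpha}\phi_0$, Proposition~\ref{prop: orthogonal} gives that $H_m(\tilde\psi)$ is concentrated on the contact set $K:=\{\tilde\psi=w\}$. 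Putting $v:=(1-\alpha)\tilde\psi+\alpha\phi_0 \in \SH_m(X,\omega)$, we have $v\leq u$ with equality exactly on $K$, so Proposition~\ref{prop: viscosity} yields $\mathbf{1}_K H_m(v)\leq \mathbf{1}_K f\omega^n$. The multilinear expansion (Lemma~\ref{lem: multilinearity}) of $H_m(v)=((1-\alpha)\omega_{\tilde\psi}+\alpha\omega_{\phi_0})^m\wedge\omega^{n-m}$ gives $H_m(v)\geq (1-\alpha)^m H_m(\tilde\psi)$, and altogether
\[
H_m(\tilde\psi)\;\leq\; (1-\alpha)^{-m}f\,\omega^n \quad\text{on } X.
\]

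Since $(1-\alpha)^{-m}f\in L^p(X,\omega^n)$ with $p>n/m$, the absolute Dinew--Ko\l{}odziej $L^\infty$ estimate \cite{DK14} applied to $\tilde\psi-\sup_X\tilde\psi$ (an $\omega$-$m$-sh function with zero sup) produces $\tilde\psi\geq\sup_X\tilde\psi - C_1$, where $C_1$ depends only on $p$, $n$, $m$, $X$, $\omega$, $\alpha$, and $\|f\|_p$. It remains to bound $\sup_X\tilde\psi$ from below. From $\int_X H_m(\tilde\psi)=1$, the Hessian domination, and H\"older's inequality,
\[
1 \;\leq\; (1-\alpha)^{-m}\|f\|_p\,\omega^n(K)^{(p-1)/p},
\]
so $\omega^n(K) \geq c_0 := ((1-\alpha)^m/\|f\|_p)^{p/(p-1)} > 0$. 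Since $v=u$ on $K$,
\[
(1-\alpha)\int_K \tilde\psi\,\omega^n + \alpha\int_K \phi_0\,\omega^n \;=\; \int_K u\,\omega^n \;\geq\; -B_1,
\]
where $B_1$ is the uniform $L^1$-bound for normalized $\omega$-$m$-sh functions. Using $\phi_0\leq 0$, this gives $\int_K \tilde\psi\,\omega^n \geq -B_1/(1-\alpha)$, whence
\[
\sup_X\tilde\psi \;\geq\; \frac{1}{\omega^n(K)}\int_K \tilde\psi\,\omega^n \;\geq\; -\frac{B_1}{(1-\alpha)\,c_0} \;=:\; -C_2.
\]
Combining these estimates, $u\geq v = (1-\alpha)\tilde\psi+\alpha\phi_0 \geq \alpha\phi_0 - (1-\alpha)(C_1+C_2) =: \alpha\phi_0 - C$, as required.

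The main obstacle is converting the absolute oscillation bound on $\tilde\psi$ (which is all Dinew--Ko\l{}odziej directly provides) into the desired pointwise lower bound; this requires the lower bound for $\sup_X\tilde\psi$ established above. The crucial ingredient there is that the contact set $K$ has positive $\omega^n$-measure, forced by the Hessian mass identity $\int_X H_m(\tilde\psi)=1$ together with the density bound; on $K$ we can then compare $\tilde\psi$ to $u$ through the intermediary $v$ and exploit the normalization $\sup_X u=0$ and the standard $L^1$-compactness of normalized $\omega$-$m$-sh functions.
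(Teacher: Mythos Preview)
Your proof is correct and follows essentially the same approach as the paper: the envelope $\tilde\psi$ is exactly the paper's $v_b$ with $b=(1-\alpha)^{-1}$, and the key steps (Proposition~\ref{prop: orthogonal}, Proposition~\ref{prop: viscosity}, Lemma~\ref{lem: multilinearity}, and the Dinew--Ko\l{}odziej bound) are identical. The only minor difference is in how you bound $\sup_X\tilde\psi$ from below: the paper controls $\int_X |v_b|^{1/q}H_m(v_b)$ via H\"older and uses the total mass identity, whereas you bound the $\omega^n$-measure of the contact set from below via H\"older and then average $u$ over it; both arguments ultimately rest on the same ingredients (H\"older and $L^1$-compactness of normalized $\omega$-$m$-sh functions).
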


\begin{proof}
Set $b:= (1-\alpha)^{-1}$ and $v_b := P(bu- \alpha b\phi_0) \in \SH_m(X,\omega)$. From the assumption that $u$ is less singular than $\alpha \phi_0$ we deduce that $bu- b \alpha \phi_0$ is bounded from below, hence $v_b$ is bounded.  Then $b^{-1} v_b + \alpha \phi_0 \leq u$ with equality on  $D:= \{v_b= bu -\alpha b \phi_0\}$. Hence  by Proposition  \ref{prop: viscosity}, Lemma \ref{lem: multilinearity} and Proposition \ref{prop: orthogonal} we have 
$$
b^{-m}H_m(v_b) = {\bf 1}_D b^{-m} H_m(v_b) \leq  {\bf 1}_{D} H_m(b^{-1}v_b + \alpha \phi_0) \leq {\bf 1}_D H_m(u). 
$$
Next, we want to bound $\sup_X v_b$. Let $q$ be the conjugate of $p$: $\frac{1}{p} +\frac{1}{q}=1$. By Proposition \ref{prop: orthogonal} we have 
\begin{eqnarray*}
	\int_X |v_b|^{1/q} H_m(v_b) &=& \int_{D} |bu -(b-1) \phi_0|^{1/q} H_m(v_b)\\
	&\leq &  \int_X (|bu|+ |(b-1)\phi_0|)^{1/q} b^m f \omega^n.
\end{eqnarray*}
Using the H\"older inequality we see that the above term is uniformly bounded. Since $\int_X H_m(v_b)=1$ we infer that 
 $\sup_X v_b$ is uniformly bounded. We thus can invoke \cite{DK14}, \cite{Lu13} to obtain a uniform bound for $v_b$,
 hence $b u \geq \alpha b \phi_0 -C$.  This completes the proof. 
\end{proof}

Using the same idea we obtain the following estimate : 

\begin{lemma}\label{lem: uniform estimate 2}
Fix $a\in (0,1)$, $\phi_0\in \SHm$, $\sup_X \phi_0=0$. Assume that $u\in \Em$ satisfies
$$
H_m(u) \leq f\omega^n + a H_m(\phi_0), \ \sup_X u=0, 
$$ 
where $f\in L^p(X,\omega^n), p>n/m$.  Then, for a constant $C$ depending on $p,n,m$, $X$, $\omega$, $a$, $\|f\|_p$, we have
$$
u\geq a^{1/m}\phi_0 -C. 
$$
\end{lemma}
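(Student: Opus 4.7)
The plan is to closely mimic the proof of Lemma \ref{lem: uniform estimate 1}. Set $\alpha := a^{1/m} \in (0,1)$, $b := 1/(1-\alpha)$, and consider the envelope
$$
v := P(bu - b\alpha \phi_0).
$$
Since $u \in \Em$ gives $P[u] = 0 \geq P[\alpha \phi_0]$, and $\int_X H_m(\alpha\phi_0) > 0$, Corollary \ref{coro: subextension 2} ensures $v \in \Em$. In particular $v \in \SH_m(X,\omega)$, even though $u - \alpha\phi_0$ is not a priori bounded below: this is the essential difference with Lemma \ref{lem: uniform estimate 1}, where the analogous envelope was bounded for free.

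By construction $b^{-1}v + \alpha\phi_0 \leq u$, with equality on the contact set $D := \{v = bu - b\alpha\phi_0\}$. Using $b^{-1} + \alpha = 1$ we have $\omega + dd^c(b^{-1}v + \alpha\phi_0) = b^{-1}(\omega + dd^c v) + \alpha(\omega + dd^c\phi_0)$, and expanding the $m$-th power of the right-hand side (wedged with $\omega^{n-m}$) gives
$$
H_m(b^{-1}v + \alpha\phi_0) \geq b^{-m} H_m(v) + \alpha^m H_m(\phi_0) = b^{-m}H_m(v) + a\,H_m(\phi_0).
$$
Applying Proposition \ref{prop: viscosity} to $b^{-1}v + \alpha\phi_0 \leq u$ on $D$, together with the hypothesis $H_m(u) \leq f\omega^n + aH_m(\phi_0)$, the $aH_m(\phi_0)$ terms cancel to yield $b^{-m}{\bf 1}_D H_m(v) \leq {\bf 1}_D f\omega^n$. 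Proposition \ref{prop: orthogonal} then concentrates $H_m(v)$ on $D$, so $H_m(v) \leq b^m f\omega^n$ globally.

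The remaining argument copies Lemma \ref{lem: uniform estimate 1}: on $D$, $|v| = |bu - b\alpha\phi_0| \leq b(|u| + \alpha|\phi_0|)$, and H\"older's inequality with conjugate exponents $p,q$ gives
$$
\int_X |v|^{1/q} H_m(v) \leq b^{m+1/q}\,\|f\|_p\left(\int_X (|u|+|\phi_0|)\,\omega^n\right)^{1/q},
$$
which is uniformly bounded. Together with $\int_X H_m(v) = 1$ this forces $\sup_X v$ to be bounded from below, while $v \leq bu - b\alpha\phi_0$ combined with $L^1$-compactness of normalized $\omega$-$m$-sh functions gives an upper bound for $\sup_X v$. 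Normalizing $v$ and invoking the Dinew--Ko\l{}odziej $L^\infty$ estimate \cite{DK14,Lu13} then yields $v \geq -C$. Finally $u - \alpha\phi_0 \geq b^{-1}v \geq -C/b$, proving the lemma.

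The main obstacle is producing $v$ as a genuine $\omega$-$m$-sh function (in fact in $\Em$) without the a priori comparability assumed in Lemma \ref{lem: uniform estimate 1}; this is exactly what Corollary \ref{coro: subextension 2} is tailored for, which in turn is why the relative potential theory of Section \ref{sect: Relative PP} was developed.
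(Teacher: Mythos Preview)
Your argument is correct and follows the same route as the paper: choose $b$ with $(1-b^{-1})^m=a$, form the envelope $v=P(bu-(b-1)\phi_0)$, use the viscosity inequality on the contact set to cancel the $aH_m(\phi_0)$ term and obtain $H_m(v)\leq b^m f\omega^n$, then finish via the Dinew--Ko{\l}odziej $L^\infty$ bound exactly as in Lemma~\ref{lem: uniform estimate 1}. Your invocation of Corollary~\ref{coro: subextension 2} (with $v=\alpha\phi_0$, noting $b\alpha=b-1$) to produce $v\in\Em$ is in fact cleaner than the paper's reference to Theorem~\ref{thm: strict m positive}, since the hypothesis $u\leq\phi_0$ of that theorem is not available here.
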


\begin{proof}
Fix a constant $b>1$ such that $(1-b^{-1})^m =a$, and set
$$
v_b := P(bu-(b-1)\phi_0), \ \ D:= \{v_b = bu -(b-1)\phi_0\}.
$$
It follows from Theorem \ref{thm: strict m positive} that $v_b \in \Em$.  Since  $b^{-1} v_b + (1-b^{-1}) \phi_0 \leq u$  with equality on $D$, by Proposition  \ref{prop: viscosity} and Lemma \ref{lem: multilinearity},  we have 
$$
\Id_D  \left ( b^{-m}H_m(v_b) + (1-b^{-1})^m H_m(\phi_0) \right)  \leq  \Id_D H_m(u). 
$$
 Using the above inequality, the assumption, and Proposition \ref{prop: orthogonal} we deduce that 
$$
H_m(v_b) = \Id_D H_m(v_b)  \leq b^m f \omega^n. 
$$
Having this, we can proceed as in the proof of Lemma \ref{lem: uniform estimate 1}. The details are left to the interested readers. 
\end{proof}

\begin{proof}[Proof of Theorem \ref{thm: Hes eq}]
We use the supersolution method of \cite{GLZJDG,DDL4}. 
\medskip 

\noindent {\bf Construction of supersolutions.}
 Fix $a\in (0,1)$ and solve, for each $k>0$ 
$$
H_m(u_k) := a{\bf 1}_{\{\phi \leq -k\}} H_m(\max(\phi,-k)) + c_k f\omega^n, 
$$
with $\ u_k \in \Em, \ \sup_X u_k=0.$ Here $c_k> 0$ is a constant ensuring that the two sides have the same total mass. The existence of the solution was proved in \cite{LN15}. Computing the total mass we see that  $c_k\searrow c(a)\geq 1$ defined by 
\begin{equation*}
a \left (1- \int_X H_m(\phi)\right ) +  c(a) \int_X H_m(\phi) =1. 
\end{equation*}
It follows from Lemma \ref{lem: uniform estimate 2} that, for a uniform constant $C_1$ depending on the fixed parameters (and also on $a$), 
$$
u_k \geq \phi - C_1.
$$
For each $l >0$ we define $\tilde{u}_{k,l} := P(\min(u_k,u_{k+1},...,u_{k+l}))$. Then by Corollary \ref{cor: Hes rooftop}, for $t>0$ fixed and $k>t$ we have 
$$
{\bf 1}_{\{\phi >-t\}} H_m(\tilde{u}_{k,l}) \leq c_k f\omega^n. 
$$ 
As $l\to +\infty$, $\tilde{u}_{k,l}$ decreases to a function $\tilde{u}_k \in \SHm$ such that $\phi-C_1\leq \tilde{u}_{k} \leq 0$.  Thus by Theorem \ref{thm: lsc of Hes measure} we have 
$$
{\bf 1}_{\{\phi>-t\}} H_m(\tilde{u}_{k}) \leq c_k f\omega^n.
$$
As $k\to +\infty$, $\tilde{u}_k$ increases a.e. to a function $\tilde{u}\in \SHm$ such that $\phi-C_1 \leq \tilde{u} \leq 0$ and by Theorem \ref{thm: lsc of Hes measure} we have 
$$
{\bf 1}_{\{\phi >-t\}} H_m(\tilde{u}) \leq c f \omega^n. 
$$
Letting $t\to +\infty$ we arrive at $H_m(\tilde{u}) \leq cf \omega^n$.

\medskip 

\noindent {\bf Envelope of supersolutions is a solution.}
The above analysis shows that for each $j\in \mathbb{N}$, there exists $w_j \in \SH_m(X,\omega)$ such that $\phi-C_j \leq w_j \leq 0$ and 
$$
H_m(w_j) \leq (1+2^{-j}) f\omega^n.
$$ 
Adding a constant we can assume that $\sup_X w_j=0$. By Lemma \ref{lem: uniform estimate 1} we have 
$$
w_j \geq \alpha \phi_0 -C,
$$
for a uniform constant $C$. 
For $k,l \in \mathbb{N}$, we set as above 
$$
\tilde{w}_{k,l} := P(\min(w_k,...,w_{k+l})).
$$
Then,  $\tilde{w}_{k,l}\geq \alpha \phi_0 -C$, for all $k,l$, hence $\tilde{w}_{k} :=  \lim_{l} \tilde{w}_{k,l} \in \SHm$. Since $H_m(\tilde{w}_{k,l}) \leq (1+2^{-k}) f\omega^n$ it follows from Theorem \ref{thm: lsc of Hes measure} and Theorem \ref{thm: dominated convergence} that  $H_m(\tilde{w}_{k,l})$ weakly converges to $H_m(\tilde{w}_{k})$. We thus have
$$
H_m(\tilde{w}_{k}) \leq (1+ 2^{-k}) f\omega^n, \ \ \tilde{w}_{k} \geq \alpha \phi_0 -C. 
$$
As $k\to +\infty$, $\tilde{w}_{k}$ increases a.e. to $\tilde{w}$. Again, it follows from Theorem \ref{thm: lsc of Hes measure} that  $H_m(\tilde{w}_{k})$ weakly converges to $H_m(\tilde{w})$, hence  $H_m(\tilde{w}) \leq f \omega^n$. Since $\tilde{w} \geq \alpha \phi_0 - C$, it follows from Theorem \ref{thm: monotonicity} that  $\int_X H_m(\tilde{w}) \geq \int_X f\omega^n$. We thus have equality, finishing the proof. 
\end{proof}

\subsection{Existence of solutions for non-$m$-polar measures}

\begin{theorem}\label{thm: existence general}
Assume that $\mu$ is a positive measure vanishing on $m$-polar sets, and $\phi$ is a model potential such that $\mu(X) = \int_X H_m(\phi)>0$. Then there exists a unique $u \in \Ephi$ such that $H_m(u) =\mu$. 
\end{theorem}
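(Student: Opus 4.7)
The plan is to follow the supersolution method of \cite{GLZJDG, DDL4}, with the essential modification that Theorem \ref{thm: compactness} replaces the relative $L^{\infty}$-estimate used in the Monge--Amp\`ere setting. By \cite[Theorem 1.3]{LN15}, I can write $\mu = c_0 H_m(\psi_0)$ with $\psi_0 \in \Em$ and $\sup_X \psi_0 = 0$, and truncate via $\psi_0^k := \max(\psi_0, -k)$. Simultaneously I would approximate $\phi$ by a sequence of less singular model potentials $\phi_j$ (for instance of the form $P[(1-\epsilon_j)\phi]$ with $\epsilon_j \searrow 0$) and pick measures $\mu_j$ --- convex combinations of the bounded-density measures $H_m(\psi_0^j)$ and, say, a multiple of $\omega^n$ --- so that the mass-matching condition $\mu_j(X) = \int_X H_m(\phi_j)$ holds and $\mu_j \leq A H_m(\psi_0^{k_0})$ uniformly in $j$ for some fixed $k_0$ and constant $A$.

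For each $j$, Theorem \ref{thm: Hes eq} (or a direct adaptation of its proof to the approximate model potential $\phi_j$) will produce $u_j \in \mathscr{E}_{\phi_j}$ with $H_m(u_j) = \mu_j$ and $\sup_X u_j = 0$. The uniform bound $H_m(u_j) \leq A H_m(\psi_0^{k_0})$ then activates Theorem \ref{thm: compactness}: after passing to a subsequence (still denoted $u_j$), for each $k$ the envelopes $v_{k,l} := P(u_k, \ldots, u_{k+l})$ decrease to some $v_k \in \Eone$ as $l \to \infty$, and $v_k$ increases to some $u \in \Eone$ as $k \to \infty$. Using the lower semicontinuity of the Hessian measure (Theorem \ref{thm: lsc of Hes measure}), Theorem \ref{thm: dominated convergence}, and monotonicity of mass (Theorem \ref{thm: monotonicity}), one should deduce both $H_m(u) \leq \mu$ and the sharp equality $\int_X H_m(u) = \mu(X)$, forcing $H_m(u) = \mu$. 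Since $u$ is more singular than $\phi$ and its total Hessian mass equals that of $\phi$, Corollary \ref{cor: Darvas criterion} will yield $P[u] = \phi$, placing $u$ in $\Ephi$.

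For uniqueness, suppose $u_1, u_2 \in \Ephi$ both solve $H_m(u_i) = \mu$ with $\sup_X u_i = 0$. The rooftop envelope $v := P(u_1, u_2)$ lies in $\Ephi$ by Proposition \ref{prop: rooftop envelope 1}, so $\int_X H_m(v) = \mu(X)$. Corollary \ref{cor: Hes rooftop} together with $H_m(u_i) = \mu$ yields $H_m(v) \leq \mathbf{1}_{\{v = u_1\}}\mu + \mathbf{1}_{\{v = u_2\}}\mu$, and Proposition \ref{prop: viscosity} on each contact set shows $\mathbf{1}_{\{v = u_i\}} H_m(v) \leq \mathbf{1}_{\{v = u_i\}}\mu$. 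Combining these with the mass equality $\int_X H_m(v) = \mu(X)$ forces $\int_{\{v < u_1\}} H_m(v) = 0$, and Theorem \ref{thm: domination principle} then yields $v \geq u_1$, hence $v = u_1$; by symmetry $v = u_2$, so $u_1 = u_2$.

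The hardest step will be the construction of $(\phi_j, \mu_j)$ in a way that simultaneously preserves the mass-matching condition, keeps $\mu_j$ uniformly dominated by a fixed bounded Hessian measure so that Theorem \ref{thm: compactness} applies, and ensures $\phi_j$ is a genuine model potential decreasing to $\phi$. A secondary technical point will be verifying the sharp mass equality $\int_X H_m(u) = \mu(X)$ for the limiting function, rather than strict inequality --- this requires pairing Theorem \ref{thm: dominated convergence} (via the uniform domination bound) with the lower semicontinuity of the Hessian mass.
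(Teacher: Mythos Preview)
Your existence outline has two genuine gaps. First, Theorem \ref{thm: Hes eq} is stated and proved only for measures $f\omega^n$ with $f\in L^\infty$; its proof rests on Lemmas \ref{lem: uniform estimate 1}--\ref{lem: uniform estimate 2}, which in turn require $L^p$ density with $p>n/m$. The truncations $H_m(\psi_0^k)$ are not ``bounded-density'' measures in general, so you cannot invoke that theorem (or adapt its proof) for your $\mu_j$. Second, Theorem \ref{thm: compactness} requires $u_j\in\Em$, i.e.\ full mass, whereas your $u_j\in\mathscr{E}_{\phi_j}$ have mass $\int_X H_m(\phi_j)<1$; the Chern--Levine--Nirenberg energy bound and the $d$-compactness argument both break down outside $\Em$. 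The paper circumvents both issues by never approximating $\phi$: it solves auxiliary equations in $\Em$ (adding a term $a\,{\bf 1}_{\{\phi\leq -k\}}H_m(\max(\phi,-k))$ to make the total mass $1$) and then applies Theorem \ref{thm: compactness} not to the resulting $u_k$ but to the derived potentials $v_k:=P(bu_k-(b-1)\max(\phi,-k))\in\Em$, whose Hessian measures are dominated by $c_kb^m\mu$ via Proposition \ref{prop: viscosity}. A second round of the same construction, now with $h_j:=P(bw_j-(b-1)P(\lambda\phi))$, handles the passage from supersolutions to an exact solution.

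Your uniqueness argument also does not close. From Corollary \ref{cor: Hes rooftop} and Proposition \ref{prop: viscosity} you correctly get $H_m(v)\leq\mu$, hence $H_m(v)=\mu$ by mass equality; but this only yields $\int_{\{v<u_1\}}H_m(v)=\mu(\{v<u_1\})$, not $0$, so the domination principle cannot be invoked. You are left with $v\leq u_1$, $H_m(v)=H_m(u_1)$, both in $\Ephi$ --- which is exactly the statement you are trying to prove. The paper's proof (Theorem \ref{thm: uniqueness}) first reduces to the ordered case $u\leq v$ via $\max(u,v)$, then studies $\varphi_b:=P(bu-(b-1)v)$ and uses the mixed Hessian inequality (Lemma \ref{lem: mixed Hes ineq}) to force the density $f_b$ in $H_m(\varphi_b)=f_b\mu$ to equal $1$; a further contact-set argument then shows $\sup_X\varphi_b$ stays bounded as $b\to\infty$, which forces $\{u<v\}$ to be $m$-polar.
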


\begin{proof}
 
 It suffices to treat the case when $\mu \leq A H_m(\psi_0)$, for some constant  $A>0$ and some $\psi_0 \in \SH_m(X,\omega)$, with $-1\leq \psi_0 \leq 0$. The general case will follow by   a well-known projection argument due to Cegrell as shown in \cite{GZbook,DDL2}.  
 
 In the arguments below we use $C$ to denote various uniform constants. 
  
\medskip 

\noindent {\bf Construction of supersolutions. } For each $c>1$, we claim that there exists $u_c \in \SH_m(X,\omega)$ such that 
$$
P[u_c] \geq \phi, \  \text{and}\  H_m(u_c) \leq c \mu.
$$

To prove the claim, we fix $a\in (0,1)$ and solve, using \cite[Theorem 1.3]{LN15},  for each $k>0$ 
$$
H_m(u_k) := a{\bf 1}_{\{\phi \leq -k\}} H_m(\max(\phi,-k)) + c_k \mu, 
$$
with $\ u_k \in \mathscr{E}, \ \sup_X u_k=0$. Recall that $\mathscr{E} := \mathscr{E}(X,\omega,m)$ is the class of $\omega$-$m$-sh functions $u$ with full mass, $\int_X H_m(u)=1$.  Here $c_k> 0$ is a constant ensuring that the two sides have the same total mass. Computing the total mass we see that  $c_k\to c(a)\geq 1$ defined by 
\begin{equation}\label{eq: mass}
a \left (1- \int_X H_m(\phi)\right ) +  c(a) \int_X H_m(\phi) =1. 
\end{equation}
Fix $b>1$ such that $(1-b^{-1})^m=a$ and set 
$$
v_k := P(bu_k-(b-1)\max(\phi,-k)).
$$
Since $0 = P[u_k]$, it follows from  Corollary \ref{coro: subextension} (with $u,v \in \Em$  hence $P[u]=P[v]=0$) that $v_k \in \Em$. Setting
$
D_k := \{v_k = bu_k - (b-1) \max(\phi,-k)\}, 
$
it follows from Proposition \ref{prop: viscosity} that  
$$
{\bf 1}_{D_k} \left(b^{-m}H_m(v_k)  + (1-b^{-1})^m H_m(\max(\phi,-k))\right) \leq H_m(u_k).  
$$
By the choice of $b$ and by Proposition \ref{prop: orthogonal} we have  $H_m(v_k) \leq c_kb^m \mu$. By Proposition \ref{prop: orthogonal} again we have 
$$
\int_X |v_k| H_m(v_k) \leq \int_{D_k} |b u_k -(b-1)\max(\phi,-k)|  b^m c_k \mu \leq C,
$$
where the last estimate follows from \cite[Corollary 3.18]{Lu13}.  It thus follows that $\sup_X v_k$ is uniformly bounded. We can invoke Theorem \ref{thm: compactness}  to construct a subsequence, still denoted by $v_j$, such that for all $k$,
$$
\tilde{v}_k:= \lim_{l\to +\infty}P(v_k,v_{k+1},...v_{k+l}) \in \Eone.
$$ 
For each $k,l$ we define 
$$
\tilde{u}_{k,l} :=P(u_k,...,u_{k+l}) ; \  \tilde{u}_k := \lim_{l\to +\infty} \tilde{u}_{k,l}, \ \tilde{u} := \left (\lim_{k\to +\infty} \tilde{u}_k\right)^*.
$$
By the above construction we have that $u_k \geq b^{-1} v_k  + (1-b^{-1}) \phi$, hence
$$
\tilde{u}_k \geq b^{-1} \tilde{v}_k  + (1-b^{-1}) \phi.
$$
It thus follows from Lemma  \ref{lem: concavity of P} that $P[\tilde{u}_k] \geq \phi$, hence $P[\tilde{u}] \geq \phi$.  Fixing $t>0$, by Corollary \ref{cor: Hes rooftop} we have that, for all $k>t$, 
$$
\Id_{\{\phi>-t\}} H_m(\tilde{u}_{k,l}) \leq c_k \mu. 
$$
Since $\{\phi>-t\}$ is quasi-open, we can invoke Theorem \ref{thm: lsc of Hes measure} to obtain, letting $l\to +\infty$ and then $k\to +\infty$, 
$$
\Id_{\{\phi>-t\}} H_m(\tilde{u})  \leq c(a) \mu, 
$$
Letting $t\to +\infty$ we obtain $H_m(\tilde{u}) \leq c(a)\mu$. 
From \eqref{eq: mass} we see that $c(a)\to 1$ as $a\to 1$, hence $c(a)$ can be made arbitrarily near $1$. This proves the claim.

\medskip 

\noindent {\bf Envelope of supersolutions is a solution.}
The first step shows that for each $j\in \mathbb{N}$, there exists $w_j \in \SH_m(X,\omega)$ such that 
$$
\sup_X w_j =0,\  P[w_j]  \geq  \phi,\ \text{and}\ H_m(w_j) \leq (1+2^{-j})\mu. 
$$ 
It follows from Theorem \ref{thm: strict m positive} that there exists a constant $\lambda>1$ such that $P(\lambda\phi) \in \SH_m(X,\omega)$. Fix $b>1$ such that $b =(b-1)\lambda$. It follows from  
$$
P[w_j] \geq (1-b^{-1})\lambda \phi \geq (1-b^{-1})P(\lambda \phi)
$$
and Corollary \ref{coro: subextension 2} that 
$$
h_j := P(b w_j -(b-1) P(\lambda \phi)) \in \Em. 
$$
Moreover, it follows from Proposition \ref{prop: orthogonal} that 
$$
\int_X |h_j| H_m(h_j) \leq 2\int_X (|b w_j|+ (b-1) |P(\lambda \phi)|)b^{m} \mu \leq C,
$$
where the last estimate follows from the Chern-Levine-Nirenberg inequality \cite[Corollary 3.18]{Lu13}. It thus follows that $\sup_X h_j$ is uniformly bounded, as well as $E(h_j)$. As in the proof of the claim we can find a subsequence, still denoted by $h_j$, such that 
$$
\tilde{h}_k := \lim_{l\to +\infty} P(h_k,...,h_{k+l}) \in \Eone. 
$$ 
As in the first step we set 
$$
\tilde{w}_{k,l}:= P(w_k,...,w_{k+l}), \ \tilde{w}_k:= \lim_{l \to +\infty} \tilde{w}_{k,l}, \ \tilde{w} := \left (\lim_{k\to +\infty} \tilde{w}_k \right )^*. 
$$
By construction  we have 
$$
\tilde{w}_k \geq b^{-1} \tilde{h}_k + (1-b^{-1})P(\lambda \phi),
$$ 
hence $\tilde{w}_k \in \SHm$.  It follows  from Proposition \ref{prop: rooftop envelope 1} that $\tilde{w}_{k,l} \in  \Ephi$. By Corollary \ref{cor: Hes rooftop} we have 
$$
H_m(\tilde{w}_{k,l})  \leq (1+ 2^{-k}) \mu, \ \int_X H_m(\tilde{w}_{k,l}) \geq \mu(X).  
$$
By Theorem \ref{thm: dominated convergence}  we have that $H_m(\tilde{w}_{k,l})$ weakly converges to $H_m(\tilde{w}_k)$,  hence 
$$
 H_m(\tilde{w}_k) \leq (1+2^{-k})\mu, \ \int_X H_m(\tilde{w}_k) \geq \mu(X).
$$
By Theorem \ref{thm: dominated convergence} again we have $H_m(\tilde{w}) \leq \mu$ and  $\int_X H_m(\tilde{w}) \geq \mu(X)$, hence equality. 
\end{proof}

\subsection{Uniqueness}\label{sect: uniqueness}

To prove uniqueness,  as shown in \cite{DDL4}, one can follow closely the argument of S. Dinew \cite{DiwJFA09}.
We provide here a new proof using the orthogonal property of the envelopes. We hope that  this proof, which  is also new in the Monge-Amp\`ere case, will be useful in studying Monge-Amp\`ere type equations on non-K\"ahler manifolds. 

\begin{theorem}\label{thm: uniqueness}
Let $\phi$ be a model potential and let $u,v \in \Ephi$. If $H_m(u)=H_m(v)$ then $u-v$ is constant. 
\end{theorem}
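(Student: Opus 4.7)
The plan is to use the orthogonality of the Hessian measure of the rooftop envelope $w:=P(u,v)$ in combination with the domination principle, after first forcing every mixed Hessian built from $u,v$ to equal $\mu$.

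First, using the mixed Hessian inequality (Lemma~\ref{lem: mixed Hes ineq}), every mixed Hessian $H_m(u_{i_1},\ldots,u_{i_m})$ with each $u_{i_k}\in\{u,v\}$ dominates $\mu$; since $u,v\in\mathscr{E}_\phi$ are more singular than $\phi$, the monotonicity theorem (Theorem~\ref{thm: monotonicity}) gives the matching upper bound $\int_X H_m(u_{i_1},\ldots,u_{i_m})\leq\mu(X)$, forcing each mixed Hessian to equal $\mu$. In particular, by Lemma~\ref{lem: multilinearity}, $H_m((1-t)u+tv)=\mu$ for every $t\in[0,1]$.

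Next, set $w:=P(u,v)$. Proposition~\ref{prop: rooftop envelope 1} gives $w\in\mathscr{E}_\phi$ and hence $\int_X H_m(w)=\mu(X)$. The viscosity property (Proposition~\ref{prop: viscosity}) applied to $w\leq u$ gives $\mathbf{1}_{\{w=u\}}H_m(w)\leq \mathbf{1}_{\{w=u\}}\mu$, and symmetrically for $v$. Corollary~\ref{cor: Hes rooftop} confines $H_m(w)$ to the contact set $\{w=u\}\cup\{w=v\}$, and together with the viscosity bounds on the two contact pieces we obtain $H_m(w)\leq \mu$ on $X$; equality of total masses then forces $H_m(w)=\mu$.

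Finally, by a constant shift of $v$ (which preserves $\mathscr{E}_\phi$ and the equation $H_m(\cdot)=\mu$) one normalizes $\sup_Xu=\sup_Xv$, and proves $w=u=v$ as follows. For $c>0$, the truncation $\tilde u_c:=\max(w,u-c)$ satisfies $u-c\leq\tilde u_c\leq u$, so the Demailly-type inequality (Lemma~\ref{lem: Dem inequality}) combined with Theorem~\ref{thm: monotonicity} yields $H_m(\tilde u_c)=\mu$. Since $u$ and $\tilde u_c$ differ by at most $c$, the domination principle (Theorem~\ref{thm: domination principle}) applies and forces $\tilde u_c=u$ as soon as $\mu(\{\tilde u_c<u\})=\mu(\{w<u\})=0$; this last vanishing is extracted from the support information $H_m(w)=\mu$ carried on $\{w=u\}\cup\{w=v\}$ from the previous step, used symmetrically in $u$ and $v$. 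Passing to the limit $c\to 0^+$ gives $w\geq u$, hence $w=u$, and the symmetric argument with $\tilde v_c:=\max(w,v-c)$ gives $w=v$, so $u=v$ up to the normalizing constant. The main obstacle is the final step, i.e.\ verifying $\mu(\{w<u\})=0$, which is precisely where the orthogonality of the Hessian envelope from Step~2 plays its decisive role.
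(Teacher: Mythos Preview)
Your argument has a genuine gap at the final step, precisely where you acknowledge the main obstacle lies. The orthogonality information from Step~2 does \emph{not} yield $\mu(\{w<u\})=0$. What Step~2 gives you is that $H_m(w)=\mu$ is supported on $\{w=u\}\cup\{w=v\}$, i.e.\ $\mu(\{w<u\}\cap\{w<v\})=0$. Hence
\[
\mu(\{w<u\}) \;=\; \mu(\{w<u\}\cap\{w=v\}) \;\leq\; \mu(\{v<u\}),
\]
and symmetrically $\mu(\{w<v\})\leq\mu(\{u<v\})$. But $\mu(\{u\neq v\})=0$ is exactly the content of the theorem you are trying to prove, so the argument is circular. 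Your truncation $\tilde u_c=\max(w,u-c)$ does not help here, since $\{\tilde u_c<u\}=\{w<u\}$ for every $c>0$; nothing new is gained by varying $c$. Also, your Step~1 (all mixed Hessians equal $\mu$) is correct but nowhere used in Steps~2--3.

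For comparison, the paper first reduces to $u\leq v$ by passing to $\max(u,v)$, and then works with the envelope $\varphi_b:=P(bu-(b-1)v)$ rather than $P(u,v)$. The point is that $H_m(\varphi_b)$ is supported on the \emph{single} contact set $\{\varphi_b=bu-(b-1)v\}$. After proving $H_m(\varphi_b)=\mu$ (via the mixed Hessian inequality and a density argument), this one-sided orthogonality translates into $\mu(\{\psi_b<u\})=0$ for $\psi_b:=b^{-1}\varphi_b+(1-b^{-1})v$, because $\{\psi_b<u\}=\{\varphi_b<bu-(b-1)v\}$. This is the crucial extra information your rooftop envelope $P(u,v)$ cannot provide: its support condition is two-sided and leaves the set $\{w=v<u\}$ uncontrolled. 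The paper then bootstraps via a two-step argument (first the special case where $\mu$ is concentrated on $\{u=v\}$, then the general case by applying the special case to $\psi_b$ and $u$), exploiting the freedom in the parameter $b\to+\infty$.
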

\begin{proof}
We normalize $u,v$ by $\sup_X u=0$, $\sup_X v =0$. Set $\mu := H_m(u)=H_m(v)$.  It follows from Lemma \ref{lem: Dem inequality} that $w:= \max(u,v)$ satisfies $H_m(w) \geq \mu$ and $\int_X H_m(w) =\mu(X)$, hence $H_m(w) =\mu$. Thus,  we can assume that $u\leq v$. 
\medskip
 
\noindent
{\bf Step 1.} We first assume that $\mu$ is concentrated on $\{u=v\}$ \footnote{One can also invoke the domination principle.}.  Fix $b>1$ and set 
 $$
 \varphi_b := P(bu -(b-1)v), \ D:= \{\varphi_b = bu-(b-1)v\}.
 $$ 
 It follows from Theorem \ref{thm: strict m positive} that $\varphi_b \in \Ephi$. Since  
$
 b^{-1} \varphi_b + (1-b^{-1}) v \leq u, 
 $
 with equality on $D$, 
 it follows from Proposition \ref{prop: viscosity} that 
 \begin{equation}
 	\label{eq: uniqueness 1}
 	 \Id_D H_m(b^{-1} \varphi_b + (1-b^{-1}) v) \leq \Id_D H_m(u).
 \end{equation}
Combining this with the fact that $H_m(\varphi_b)$ is concentrated on $D$, and Lemma \ref{lem: multilinearity}, we arrive at 
 $$
b^{-m}  H_m(\varphi_b) = b^{-m} {\bf 1}_D H_m(\varphi_b) \leq \Id_{D}H_m(u).
 $$
 Writing $H_m(\varphi_b) = f_b \mu$, for some $0\leq f_b \in L^1(\mu)$, and using the mixed Hessian inequality (Lemma \ref{lem: mixed Hes ineq}),  and  multilinearity of the Hessian measure (Lemma \ref{lem: multilinearity}) we obtain 
 \begin{eqnarray}
 H_m(b^{-1} \varphi_b + (1-b^{-1}) v) & \geq &  \sum_{k=0}^m \binom{m}{k} b^{-k} (1-b^{-1})^{m-k} \omega_{\varphi_b}^k \wedge \omega_v^{m-k} \wedge \omega^{n-m} \nonumber\\
& \geq &\sum_{k=0}^m \binom{m}{k} b^{-k} (1-b^{-1})^{m-k} f_b^{k/m} \mu \nonumber \\
 &= &\left ( b^{-1}f_b^{1/m} + 1-b^{-1}\right )^m \mu. \label{eq: mixed 1}
\end{eqnarray}
 From  \eqref{eq: uniqueness 1} and \eqref{eq: mixed 1} we have 
 $$
\Id_D \left(b^{-1} f_b^{1/m} + 1-b^{-1}\right)^m \mu \leq \Id_D H_m(b^{-1} \varphi_b + (1-b^{-1}) v) \leq \Id_{D}\mu. 
 $$
We thus have $f_b\leq 1$, hence $f_b=1$, $\mu$-a.e. because $\int_X f_b \mu = \mu(X)$.   It thus follows from \eqref{eq: mixed 1} that, for $\psi_b:= b^{-1} \varphi_b + (1-b^{-1}) v$, we have $H_m(\psi_b) \geq \mu$ with the same total mass, hence $H_m(\psi_b)= \mu$. Thus, we have $\mu = H_m(\psi_b) =H_m(\varphi_b)$, therefore 
\begin{equation}
\label{eq: uniqueness 2}
\mu(\psi_b<u) =  \int_{\{\psi_b<u\}}H_m(\psi_b) = \int_{\{\varphi_b < bu-(b-1)v\}} H_m(\varphi_b) =0,
\end{equation}
where in the last equality we use the fact that $H_m(\varphi_b)$ is concentrated in the contact set  $\{\varphi_b = bu-(b-1)v\}$, thanks to Proposition \ref{prop: orthogonal}. 
Now, we use the assumption that $\mu$ is concentrated on $\{u=v\}$ to deduce, using \eqref{eq: uniqueness 2}, that $\mu$ is concentrated on the set $\{\varphi_b =u=v\}$. Therefore
\begin{equation}\label{eq: uniqueness bis}
\mu(X) = \mu(u=\varphi_b)  \leq \mu(u\leq \sup_X \varphi_b).
\end{equation}
From \eqref{eq: uniqueness bis} and the assumption that $\mu$ vanishes on $m$-polar sets, we infer that $\sup_X \varphi_b$ is uniformly bounded. Now,  letting $b\to +\infty$ we see that the function $\lim_{b\to +\infty}(\varphi_b-\sup_X \varphi_b)$ is a $\omega$-$m$-sh function which takes value $-\infty$ in the set $\{u<v\}$. This forces $\{u<v\}$ to be $m$-polar,  hence $u =v$.

\medskip

\noindent {\bf Step 2.} We treat the general case. We use the same notations and repeat the same arguments as above to  arrive at \eqref{eq: uniqueness 2}. We then get $H_m(\psi_b)=H_m(u) =\mu$, and $\psi_b \leq u$, and $\mu(\psi_b<u) =0$. Using the first step we have that $u=\psi_b$. Letting $b\to +\infty$ we obtain $u=v$.   
  \end{proof}

\subsection{Aubin-Yau equation}
Having the solutions to the complex Hessian equation $H_m(u)= \mu$, one can follow \cite{DDL2,DDL4} to prove the following result: 
\begin{theorem}
Assume that $\mu$ is a non-$m$-polar positive measure on $X$ and $\phi$ is a model potential. Then there exists a unique $u\in \Ephi$ such that $H_m(u) =e^u \mu$.  
\end{theorem}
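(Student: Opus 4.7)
The plan is to reduce the Aubin-Yau problem to the equation $H_m(u)=\mu$ already solved in Theorem \ref{thm: existence general}, by means of a fixed-point argument adapting the Monge-Amp\`ere strategy of \cite{DDL2,DDL4}. Uniqueness is immediate from the comparison principle combined with Theorem \ref{thm: uniqueness}, while existence requires more care because the singularity class $\Ephi$ is not closed under $L^1$ limits.

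\medskip

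\noindent\emph{Uniqueness.} Given two solutions $u_1,u_2\in\Ephi$, Corollary \ref{cor: Darvas criterion} gives $P[u_1]=P[u_2]=\phi$, so iterating the comparison principle (Theorem \ref{thm: CP}) yields the pure inequality
$$
\int_{\{u_1<u_2\}} e^{u_2}\, d\mu = \int_{\{u_1<u_2\}} H_m(u_2) \leq \int_{\{u_1<u_2\}} H_m(u_1) = \int_{\{u_1<u_2\}} e^{u_1}\, d\mu.
$$
Since $e^{u_2}>e^{u_1}$ pointwise on $\{u_1<u_2\}$, this forces $\mu(\{u_1<u_2\})=0$, and by symmetry $u_1=u_2$ holds $\mu$-a.e. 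Hence $H_m(u_1)=e^{u_1}\mu=e^{u_2}\mu=H_m(u_2)$, and Theorem \ref{thm: uniqueness} forces $u_1-u_2\equiv c$ constant; the $\mu$-a.e.\ equality pins $c=0$ since $\mu$ is non-trivial.

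\medskip

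\noindent\emph{Existence.} On $\mathcal{K}=\{v\in\Ephi:\sup_X v=0\}$ define $T:\mathcal{K}\to\mathcal{K}$ by letting $T(v)$ be the unique element of $\Ephi$, normalized by $\sup_X T(v)=0$, solving
$$
H_m(T(v)) = \frac{\int_X H_m(\phi)}{\int_X e^v\, d\mu}\, e^v\mu.
$$
This is well-defined by Theorem \ref{thm: existence general}: the right-hand side is non-$m$-polar (as $\mu$ charges no $m$-polar set and $e^v>0$ $\mu$-a.e.) and has total mass $\int_X H_m(\phi)$. A fixed point $v_*$ of $T$ yields the desired solution: with $c_* = \log\bigl(\int_X H_m(\phi)/\int_X e^{v_*}\, d\mu\bigr)$, the function $u := v_* + c_*$ lies in $\Ephi$ (adding a constant preserves the singularity class and the total Hessian mass) and satisfies $H_m(u) = H_m(v_*) = e^{c_*}e^{v_*}\mu = e^u\mu$. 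To produce a fixed point, apply Schauder's theorem: $\mathcal{K}$ sits inside the $L^1$-compact convex set of $\omega$-$m$-sh functions bounded above by $0$, and continuity of $T$ reduces to continuity of $v\mapsto e^v\mu$ in total variation (by dominated convergence, valid since $\mu$ charges no $m$-polar set) together with the stability of Hessian equations whose right-hand sides are dominated by a fixed non-$m$-polar measure, which is precisely the content of Theorem \ref{thm: compactness}.

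\medskip

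\noindent\emph{Main obstacle.} The delicate part is preserving the singularity class $\Ephi$ under limits: Hessian mass is only lower semicontinuous along convergence in $m$-capacity (Theorem \ref{thm: lsc of Hes measure}), so naive $L^1$-limits in $\mathcal{K}$ can lose total mass and leave $\Ephi$. This is resolved via the complete metric $d$ on $\Eone$ built in Section \ref{sect: metric}: by passing to a $d$-Cauchy subsequence of the iterates and constructing $\omega$-$m$-sh subextensions of it via Theorem \ref{thm: compactness}, the limit stays within $\Ephi$. Equivalently, following \cite{DDL4}, one can bypass Schauder via a monotone iteration driven by decreasing supersolutions, each built from Theorem \ref{thm: existence general} applied to $(1+\varepsilon)$-perturbations of the right-hand side, with the loss of mass controlled by letting $\varepsilon\to 0$.
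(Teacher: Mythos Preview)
The paper itself omits this proof, deferring entirely to \cite{DDL2,DDL4}, so there is no line-by-line comparison to make. Your uniqueness argument via the comparison principle (iterated slot by slot, which is legitimate since $P[u_1]=P[u_2]=\phi$) followed by Theorem \ref{thm: uniqueness} is correct and is the standard route.

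For existence, the Schauder strategy reducing to Theorem \ref{thm: existence general} is indeed the approach of \cite{DDL2,DDL4}, but your sketch has two genuine gaps. First, the set $\mathcal K=\{v\in\Ephi:\sup_Xv=0\}$ is not convex: the supremum of a convex combination of two normalized potentials need not be zero. One must instead apply Schauder on a larger compact convex subset of $\SHm$ (for instance normalized by an integral condition) and check that $T$ maps into it. Second, and more importantly, your resolution of the ``main obstacle'' misidentifies the tool. Theorem \ref{thm: compactness} and the metric $d$ of Section \ref{sect: metric} are built for the full-mass classes $\Em$ and $\Eone$; a function $u\in\Ephi$ has $\int_X H_m(u)=\int_X H_m(\phi)<1$ in general and therefore lies in neither class, so you cannot speak of a $d$-Cauchy subsequence of iterates in $\Ephi$. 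In this paper those tools enter only inside the proof of Theorem \ref{thm: existence general}, after one has passed to auxiliary \emph{full-mass} potentials via the $P(b\,\cdot-(b-1)\cdot)$ construction; they do not apply directly to sequences in $\Ephi$.

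The continuity of $T$ in \cite{DDL2,DDL4} is instead obtained by combining uniqueness with relative stability: from any $L^1$-convergent subsequence $T(v_{j_k})\to w$ one shows, using the uniform bound $H_m(T(v_j))\le C\mu$ together with Theorem \ref{thm: dominated convergence} applied to the monotone envelopes $(\sup_{j\ge k}T(v_j))^*$ and $P(T(v_k),\dots,T(v_{k+l}))$, that $w\in\Ephi$ and $H_m(w)=c_v e^v\mu$; then $w=T(v)$ by Theorem \ref{thm: uniqueness}. Your final sentence about a monotone supersolution scheme is closer to this, but as written the proposal does not supply the argument.
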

We omit the proof of the above theorem and refer the interested  readers to \cite{DDL2,DDL4}.

\subsection{A Hodge index type inequality}\label{subsect: log-concave inequality}

The proof of Theorem \ref{thm: log concave} is very similar to that of \cite[Theorem 5.1]{DDL4} given Theorem \ref{thm: monotonicity intro}, Theorem \ref{thm: Hes eq intro} and the mixed Hessian inequality (Lemma \ref{lem: mixed Hes ineq}). For the reader's convenience we give the details below. 

\begin{proof}[Proof of Theorem \ref{thm: log concave}]
For each $j=1,...,m$, let $v_j \in \Em_{P[u_j]}$ solve $H_m(v_j) =c_j\omega^n$, where $c_j = \int_X H_m(v_j) =\int_X H_m(u_j)$. The existence of $v_j$ follows from Theorem \ref{thm: Hes eq intro}. The mixed Hessian inequality,  Lemma \ref{lem: mixed Hes ineq}, gives  
$$
H_m(v_1,...,v_m) \geq (c_1...c_m)^{1/m} \omega^n. 
$$
By Lemma \ref{lem: mass u Pu} we have that $\int_X H_m(v_1,...,v_m)=\int_X H_m(u_1,...,u_m)$, hence integrating the above inequality over $X$, we obtain the result. 
\end{proof}

\bibliographystyle{//Users/lu/Dropbox/Bib/amsplain_nodash.bst}  
\bibliography{//Users/lu/Dropbox/Bib/Biblio.bib}

\end{document}